\pgfplotsset{compat=1.17} 
\renewcommand\emptyset{\varnothing}
\renewcommand\phi{\varphi}
\newcommand\R{\mathbb{R}}
\newcommand{\inner}[2]{\langle{#1},{#2}\rangle}
\DeclareMathOperator*{\relint}{relint}
\DeclareMathOperator*{\conv}{conv}
\DeclareMathOperator*{\cone}{cone}
\DeclareMathOperator*{\vertex}{vert}
\DeclareMathOperator*{\bis}{bis}
\DeclareMathOperator*{\dist}{dist}
\DeclareMathOperator{\proj}{proj}
\DeclareMathOperator{\homog}{hom}
\DeclareMathOperator{\interior}{int}
\newtheorem{thm}{Theorem}[section]
\newtheorem*{thm*}{Theorem}
\newtheorem{cor}[thm]{Corollary}
\newtheorem{lem}[thm]{Lemma}
\newtheorem{prop}[thm]{Proposition}
\newtheorem{quest}{Question}
\theoremstyle{definition}
\newtheorem{dfn}[thm]{Definition}
\newtheorem{example}[thm]{Example}
\tikzset{
dot/.style = {circle, fill, minimum size=#1,
              inner sep=0pt, outer sep=0pt},
dot/.default = 6pt 
}
\title[]{Polyhedral combinatorics of bisectors}
\author{Aryaman Jal}
\address{Department of Mathematics, %
KTH Royal Institute of Technology, 100 44 Stockholm, 
Sweden}
\email{aryaman@kth.se}
\author{Katharina Jochemko}
\address{Department of Mathematics, %
KTH Royal Institute of Technology, 100 44 Stockholm, 
Sweden}
\email{jochemko@kth.se}
\keywords{Polyhedral norms, bisectors, polyhedral combinatorics, Wasserstein distance}
\subjclass[2020]{46B20, 52B12, 52A21, 52C45}
\date{\today}
\begin{document}

\maketitle

\begin{abstract}
For any polyhedral norm, the bisector of two points is a polyhedral complex. We study combinatorial aspects of this complex. We investigate the sensitivity of the presence of labelled maximal cells in the bisector relative to the position of the two points. We thereby extend work of Criado, Joswig and Santos (2022) who showed that for the tropical distance function the presence of maximal cells is encoded by a polyhedral fan, the bisection fan. We initiate the study of bisection cones and bisection fans with respect to arbitrary polyhedral norms. In particular, we show that the bisection fan always exists for polyhedral norms in two dimensions. Furthermore, we determine the bisection fan of the $\ell _1$-norm and the $\ell _\infty$-norm as well as the discrete Wasserstein distance in arbitrary dimensions. Intricate combinatorial structures, such as the resonance arrangement, make their appearance. We apply our results to obtain bounds on the combinatorial complexity of the bisectors.
\end{abstract}

\section{Introduction}
   Bisectors are a fundamental concept in metric geometry. Given a metric space, the bisector of two points (also called sites) is the set of all points with equal distance to both sites. Bisectors are intimately related to Voronoi diagrams, a classical and intensively studied topic in convex and computational geometry with applications in a wide range of different areas such as computer graphics, robotics, data analysis, crystallography and meteorology (see, e.g., ~\cite[Chapters 1 and 7]{Aurenhammer} and~\cite[Section 4]{MartiniSurvey}).

   In the present article we investigate combinatorial aspects of bisectors with respect to convex polyhedral norms in $\mathbb{R}^d$. Any such norm is of the  form $\|x\|_P=\min \{\lambda\geq 0\colon x\in \lambda P\}$ where $P$ is a full-dimensional centrally symmetric convex polytope, the unit ball with respect to the norm. The norm $\|x\|_P$ is also commonly referred to as the gauge function or Minkowski functional of the polytope. (See, e.g., \cite{Schneider,siegel1989lectures}.) 
   
   With respect to the Euclidean norm the bisector of two distinct points is always given by a hyperplane. In contrast, for arbitrary convex norms where the unit ball is given by a convex body, bisectors can be less well-behaved, even in $\mathbb{R}^{2}$; for example, they can have non-empty interior (see, e.g. Figure \ref{fig:2_dim_l1_R2}) or zigzag infinitely many times between two parallel lines \cite{corbalan1996geometry}. In fact, among bisectors, ill behaviour is the standard rather than the exception as evidenced by various characterizations of the Euclidean norm via topological properties of bisectors and Voronoi cells, see e.g. \cite{he2013bisectors, blomer2018voronoi}.

   In general, for arbitrary polyhedral norms, the bisector is no longer always a hyperplane but is given by a polyhedral complex~\cite[Proposition 1]{tropvoronoi}. More precisely, for sites $a,b\in \mathbb{R}^d$ and a polyhedral norm $\|.\|_P$, the bisector of $a$ and $b$,
   \[
   \bis\nolimits^P(a,b) \ = \ \left\{x\in \mathbb{R}^d\colon \|x-a\|_P=\|x-b\|_P\right\}\, ,
   \]
   is a polyhedral complex with cells of the form
   \[
   \bis\nolimits _{F,G}^P(a,b) \ = \ \bis\nolimits^P(a,b)\cap (a+C_F)\cap (b+C_G)
   \]
   where $F,G$ are faces of the polytope $P$ and $C_F$ denotes the face cone of a face $F$. Since $\|.\|_P$ is linear on face cones the cells are indeed polyhedra; see Section~\ref{prelim} for precise definitions.

   For sufficiently generic points $a,b$, the bisector $\bis ^P(a,b)$ is a pure polyhedral complex of dimension $d-1$~\cite[Corollary 2]{tropvoronoi}. The maximal cells in the complex are of the form $\bis ^P_{F,G}(a,b)$ where $F,G$ are facets of $P$. However, not every pair of facets gives rise to a maximal cell: $\bis_{F, G}^P(a,b)$ can be empty or of lower dimension. In~\cite{tropvoronoi}, Criado, Joswig and Santos investigated the sensitivity of the presence (non-emptiness) of the cells $\bis^P _{F,G}(a, b)$ to the relative position of the sites $a,b$ in the case of the tropical norm. They showed that the presence/absence of $\bis^P _{F,G}(a, b)$ considered simultaneously for all facet pairs $(F,G)$ is governed by a polyhedral fan, which they called the \textit{bisection fan}, that can be defined in purely combinatorial terms: the presence/absence of the cells only depends on the bisected ordered partition arising from $a-b$~\cite[Theorem 5]{tropvoronoi}.

   In the present work, we initiate a systematic study of bisection fans for arbitrary polyhedral norms. We study combinatorial and geometric properties of the equivalence relation given by the simultaneous emptiness/non-emptiness of $\bis _{F,G}(a,b)$ depending on the direction of $a-b$. This allows for an identification of bisectors that is coarser than the notion of affinely isomorphic polyhedral complexes but still retains important combinatorial information about them. We investigate structural properties of the equivalence classes valid for arbitrary polyhedral norms. In particular, we introduce and study the notion of bisection cones which form the building blocks of equivalence classes. Special attention is given to the $\ell _1$- and the $\ell_\infty$-norm, as well as to polygonal norms and the discrete Wasserstein distance for which we provide explicit descriptions of the bisection fans. Intricate combinatorial structures, such as the resonance arrangement, make their appearance. \\

   \textit{Main contributions:}
    We introduce the notion of bisection cones for arbitrary polyhedral cones. The bisection cone $\mathcal{B}_{F,G}$ is a polyhedral cone that geometrically encodes when a single maximal cell $\bis^P _{F,G}(a,b)$ is present or absent. We provide different geometric descriptions of the bisection cone (Propositions~\ref{raysbfg} and~\ref{projhom}) and discuss symmetry properties (Proposition~\ref{bfg_swap}). 
    
    In particular, our simple conical hull description (Proposition~\ref{raysbfg}) allows us to show that the common refinement of the face fan $\mathcal{F}_P$ of the unit ball $P$ and the fan $\mathcal{H}_P$ induced by the hyperplane arrangement formed by the facet defining hyperplanes of $P$ induce a partition of the points in general position that is coarser than the equivalence relation on bisectors. In particular, if the bisection fan exists, then it refines both these fans (Proposition~\ref{bis_refine}).

 We then use our results to determine the bisection cones and the bisection fans in the case of polygonal norms in the plane, the $\ell _1$- and the $\ell_\infty$-norm, as well as the discrete Wasserstein distance. 
 \begin{itemize}
     \item In case of polygonal norms in the plane, i.e., norms with polygons as the unit ball, the bisection fan is the complete fan whose rays are generated by all vectors of the form $v_i-v_j$ if $v_i,v_j$ are vertices of the unit ball (Theorem~\ref{prop:bisfan_polygon}).
    \item We determine the bisection cones and bisection fan of the $\ell _1$- and the $\ell_\infty$-norm with the cross-polytope and the cube as unit ball, respectively. (Proposition~\ref{cube:bis_cone}, Theorem~\ref{prop:bisfancube}, Proposition~\ref{bis_tree} and Theorem~\ref{prop:bisfantree2}). In both cases, the bisection fan is the common refinement of $\mathcal{F}_P$ and $\mathcal{H}_P$.
    \item In case of the Wasserstein distance (also known as Earth Mover's distance) the corresponding unit ball is the convex hull of the root lattice of type $A$, $e_i-e_j$, for all $i\neq j$. We consider the norm induced on its affine hull $H=\{x\in \mathbb{R}^d\colon \sum x_i =0\}$. We give a combinatorial description of the bisection fan in terms of set systems of heavy/light positive/negative sums (Theorem~\ref{set_sys}) and an explicit description as regions of linearity of sums of piece-wise linear functions (Theorem~\ref{thm:WassersteinPolyhedral}). 
   \end{itemize}
   We use the description of the bisection fans obtained above to give bounds for the number of maximal cells of the bisector if the sites $a,b$ are in general position (Theorem~\ref{prop:complexity}). We are able to give an exact count in case of polygonal norms and the $\ell _\infty$-norm that is polynomial in the number of vertices and dimension, respectively. Interestingly, for polygonal norms this number only depends on the combinatorial type of the polygon and is independent of any metric properties. For the $\ell_1$-norm and the Wasserstein distance we prove an exponential lower bound.\\

   \textit{Related work:}
   Structural properties of bisectors with respect to convex norms have been studied by various authors, see~\cite[Section 4]{he2013bisectors} and references therein. In particular, there exists a large body of research on geometric properties of bisectors with respect to convex norms in two dimensions, where already many interesting phenomena can be observed (see, e.g., \cite{corbalan1996geometry,ma2000bisectors}), as well as on  $\ell_p$-norms (see, e.g.,~\cite{lp1,lp2}).
   
   In \cite{ma2000bisectors}, Ma initiated the study of questions regarding the characterization, construction -- both theoretical and algorithmic -- and complexity of bisectors with respect to polyhedral norms in two and three dimensions. In particular, a precise criterion was given as to when the bisector of two points corresponding to a convex polygonal distance function is homeomorphic to a line. Criado, Joswig and Santos \cite{tropvoronoi} generalised the work of Ma to obtain a fuller characterization of the geometric, combinatorial and topological properties of the bisector in higher dimensions and several sites, with a particular focus on the tropical distance function. An asymmetric version of the polyhedral distance function corresponding to a simplex, was studied by Amini and Manjunath~\cite{amini2010riemann} as well as Manjunath \cite{MR3037988} in relation to tropical variants of Riemann-Roch theory. Recently, Com\u{a}neci and Joswig considered the asymmetric tropical distance function and investigated the  corresponding Voronoi diagrams~\cite{comuaneci2022asymmetric} as well as its applications to phylogenetics~\cite{comuaneci2022parametric}. 
   
   Motivated by applications to machine learning~\cite{WassersteinGAN,WassersteinLearning}, a lot of recent research activity in algebraic statistics has focused on Voronoi diagrams with respect to the discrete Wasserstein distance~\cite{ccelik2021wasserstein,Becedas2024,Optimaltransport}. The unit ball of the discrete Wasserstein distance that we focus on in the present work is given as the convex hull of all vectors $e_i-e_j$, $i\neq j$, where $e_i$ denotes a standard basis vector in $\mathbb{R}^d$. This polytope is dual to the tropical unit ball and agrees with the symmetric edge polytope of the complete graph. The facet structure of symmetric edge polytopes in general is a topic of recent and vigorous investigation in geometric combinatorics~\cite{higashitani2015smooth,Manyfaces,FacesFacets,Fundamentalpolytopes}.

   As we will show in Section~\ref{sec:Wasserstein}, the bisection fan of the discrete Wasserstein norm is intriguing in that not only does it not arise from a hyperplane arrangement but it also contains as a subfan the fan induced by the resonance arrangement $\mathcal{A}_{n}$  which consists of the hyperplanes $\sum_{i \in I}x_{i} = 0$, for $\emptyset \neq I \subseteq [n]$; the universality and Betti numbers of were recently studied by K\"uhne in~\cite{kuhne2023universality}. The resonance arrangement is the hyperplane arrangement that gives rise to the normal fan of the so-called White Whale polytope, the zonotope defined by the line segments $[0, e_{I}]$ for non-empty $I\subseteq [n]$, the number of vertices of which is only known for $n\leq 9$ \cite{chroman2021computations, brysiewicz2021computing}. The difficulty of this enumeration exemplifies the challenge of a straightforward inequality description of bisection cones in the case of the discrete Wasserstein norm in particular. These connections hint at the subtlety of the geometric-combinatorial ideas at play within the concept of the bisection fan.

   \textit{Outline:}
The article is divided as follows. In Section~\ref{prelim}, we cover the necessary background on polyhedral norms,  define the main objects of our interest -- bisectors -- and detail what is known about them. In Section~\ref{sec:bisectioncones}, we introduce the notion of the bisection cone, an object that keeps track of the non-emptiness of a given maximal cell of a bisector, and obtain various descriptions of it. In the subsequent four sections we construct the bisection fan of centrally symmetric polygons, the $\ell_{1}$-norm, the $\ell_{\infty}$-norm and the discrete Wasserstein norm, respectively. In Section~\ref{sec:complexity}, we use our description of the bisection cone to obtain the number of maximal cells in the bisector as a measure of the complexity of the bisector in the four aforementioned cases. We conclude in Section~\ref{sec:outlook} by discussing evidence supporting the existence of the bisection fan for various classical $3$-dimensional centrally symmetric polytopes and commenting on the problem of showing that every centrally symmetric polytope admits a polyhedral bisection fan. 
    \section{Preliminaries}\label{prelim}
    In the following we collect preliminaries on polyhedral distance functions and their bisectors, following~\cite{tropvoronoi}. We assume basic knowledge of polyhedral geometry, see, e.g.,~\cite{de2010triangulations,Grunbaum,Ziegler}. For further reading on convex distance functions, see also~\cite{Aurenhammer}. Since metrics convex distance functions and convex bodies are in one-to-one correspondence, in what follows we will frequently make reference to bisectors of convex bodies and bisectors of the corresponding norms interchangeably.
    
    In what follows, let $P$ always be a full-dimensional (convex) polytope in $\R^d$ containing the origin in its interior. Then $P$ induces a \textbf{polyhedral distance} function defined by \[\dist \nolimits^P(a,b) = \min \{\alpha \geq 0 : b -a \in \alpha P\}\] for all $a,b\in \R^d$. This distance function satisfies the triangle inequality, is invariant under translation and is homogeneous with respect to scaling by a positive real number. When $P$ is \textbf{centrally symmetric}, that is, $P = -P$, this distance function is indeed a metric in the usual sense and $\dist^P(0,.)$ is a norm. The \textbf{bisector} $\bis^{P}(S)$ of a finite set of points $S$ is defined as 
    \[\bis\nolimits ^P(S) = \left\{x \in {\R}^{d}: \dist \nolimits^P(a, x) = \dist \nolimits ^P(b, x) \quad \text{for $a, b \in S$} \right\}.
    \]
    Throughout we will restrict ourselves to centrally symmetric polytopes and bisectors of two sites, though some of the results can be extended beyond that. When the polytope under consideration is clear from context we will often suppress $P$ in the notation of the distance function and bisector.
    
    Given a face $F$ of $P$, the \textbf{face cone} of $F$ is 
    \[
    C_{F} = \text{cone}(F) = \left \{\sum \limits_{i=1}^{k}\lambda_{i}x_{i}: x_{i} \in F, \lambda_{i} \geq 0 , k \in \mathbb{N}\right \}.
    \]
    The collection $\mathcal{F}_{P} = \{\text{cone}(F): F \subsetneq P, \, \, F \, \, \text{face} \}$ forms the \textbf{face fan} of $P$. By definition, the restriction of $\dist^P(0, \cdot)$ to each face cone is linear. 

        Given a pair of points $(a,b)$ and faces $F,G$ of $P$ let 
        \[
        \bis \nolimits_{F,G}^P(a,b) \coloneqq\bis\nolimits^P(\{a, b\}) \cap (C_F+a) \cap (C_G+b) \, .
        \] 
        By the piecewise linearity of $\dist^P(0, \cdot)$, $\bis \nolimits_{F,G}^P(a,b)$ is always a polyhedron. These polyhedra define a polyhedral subdivision of the bisector.
   \begin{prop}[{\cite[Proposition 1]{tropvoronoi}}]\label{intro_cells_of_bis}
    Let $P\subset \mathbb{R}^d$ be a full-dimensional centrally symmetric polytope. Then $\bis^P(a,b)$ is a polyhedral complex each of whose cells are the polyhedra $\bis \nolimits_{F,G}^P(a,b)$
    for all choices of faces $F, G$ of $P$.
    \end{prop}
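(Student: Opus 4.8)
The plan is to verify three things: (1) each $\bis_{F,G}^P(a,b)$ is a polyhedron, (2) these polyhedra cover $\bis^P(a,b)$, and (3) the collection is closed under taking faces and pairwise intersections, so that it genuinely forms a polyhedral complex. For (1), recall that $\R^d = \bigcup_F (C_F + a)$ where $F$ ranges over the faces of $P$, and on each face cone $C_F$ the gauge function $\dist^P(0,\cdot)$ restricts to a linear functional $\ell_F$; concretely, if $F$ has supporting hyperplane $\{x : \inner{u_F}{x} = 1\}$ with $P \subseteq \{x : \inner{u_F}{x} \le 1\}$, then $\dist^P(0,y) = \inner{u_F}{y}$ for all $y \in C_F$. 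Hence on $(C_F + a) \cap (C_G + b)$ the equation $\dist^P(x-a) = \dist^P(x-b)$ becomes the single linear equation $\inner{u_F}{x-a} = \inner{u_G}{x-b}$, and intersecting the polyhedral cone $(C_F+a)\cap(C_G+b)$ with this hyperplane yields a polyhedron. This is essentially the observation already recorded in the text right before the statement, so the real content is (2) and (3).

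For (2), let $x \in \bis^P(a,b)$. Since the face cones of $P$ tile $\R^d$, there is a face $F$ with $x - a \in C_F$ and a face $G$ with $x - b \in C_G$; then $x \in \bis^P(a,b) \cap (C_F+a) \cap (C_G+b) = \bis_{F,G}^P(a,b)$. Conversely each $\bis_{F,G}^P(a,b)$ is by definition contained in $\bis^P(a,b)$, giving the desired covering. For (3), I would first handle intersections. Given faces $F,F'$ of $P$, the cones $C_F$ and $C_{F'}$ are faces of the face fan $\mathcal{F}_P$, so $C_F \cap C_{F'} = C_{F \cap F'}$ is again a face cone (with the convention $C_\emptyset = \{0\}$, or rather $C_{F\cap F'}$ where $F \cap F'$ is the smallest face containing the relevant ray data — I would phrase this via the fan axioms). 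Consequently
\[
\bis\nolimits_{F,G}^P(a,b) \cap \bis\nolimits_{F',G'}^P(a,b) \ = \ \bis\nolimits^P(a,b) \cap (C_{F\cap F'} + a) \cap (C_{G \cap G'} + b) \ = \ \bis\nolimits_{F\cap F', \, G \cap G'}^P(a,b),
\]
which is another cell of the family, as required.

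The step I expect to be the most delicate is showing that a \emph{face} of a cell $\bis_{F,G}^P(a,b)$ is again a cell of the same form — the face-closure axiom. The cell is cut out inside the polyhedral cone $(C_F+a)\cap(C_G+b)$ by the hyperplane $H_{F,G} = \{x : \inner{u_F}{x-a} = \inner{u_G}{x-b}\}$. A proper face of this cell arises by intersecting with a supporting hyperplane; such a hyperplane either comes from a facet-defining inequality of $C_F + a$ (forcing $x - a$ into a proper face $F'$ of $F$, i.e. into a face cone $C_{F'}$ with $F' \subsetneq F$), or symmetrically from $C_G + b$, or from the structure of $H_{F,G}$ itself restricted to the cone. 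The key point is that on the smaller cone $C_{F'}$ one has $\dist^P(0,\cdot) = \inner{u_{F'}}{\cdot}$ as well (since $C_{F'} \subseteq C_F$ and the gauge is well-defined), so the defining equation is inherited consistently; tracking through the face lattice of $P$ and of the arrangement one checks that every such face is exactly $\bis_{F',G'}^P(a,b)$ for suitable faces $F' \le F$, $G' \le G$. I would organize this by invoking that $(C_F + a) \cap (C_G + b)$, as $F,G$ range over all faces, forms the polyhedral complex refining the two translated face fans, whose cells are precisely the $(C_{F'}+a)\cap(C_{G'}+b)$; intersecting this complex with the (globally piecewise-linear, continuous) function equality $\dist^P(x-a) = \dist^P(x-b)$ preserves the complex structure because the function is linear on each cell. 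The bookkeeping over degenerate cases — when $F \cap F'$ is not a face in the naive sense, or when a cell drops dimension — is the part that needs care, but it is routine given the fan axioms for $\mathcal{F}_P$.
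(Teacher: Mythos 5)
The paper offers no proof of this proposition: it is quoted directly from Criado--Joswig--Santos \cite[Proposition~1]{tropvoronoi}, with only the preceding one-line remark that piecewise linearity of $\dist^P(0,\cdot)$ on face cones makes each $\bis_{F,G}^P(a,b)$ a polyhedron. Your self-contained argument is correct and follows the standard route: linearity of the gauge on each face cone turns the bisector condition on $(C_F+a)\cap(C_G+b)$ into a single linear equation, completeness of the face fan $\mathcal{F}_P$ gives the covering, and $C_F\cap C_{F'}=C_{F\cap F'}$ gives closure under intersection. Two points that you assert rather than prove each need one more line. First, the complex axioms require the intersection of two cells to be a \emph{face} of each, not merely another member of the family; this follows because $C_{F\cap F'}$ is a face of $C_F$, hence $(C_{F\cap F'}+a)\cap(C_{G\cap G'}+b)$ is a face of $Q:=(C_F+a)\cap(C_G+b)$, and intersecting a face of $Q$ with the affine set $H_{F,G}$ yields a face of $Q\cap H_{F,G}$. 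Second, for face-closure the clean statement to invoke is that every face of an intersection of polyhedra $Q_1\cap Q_2$ has the form $K_1\cap K_2$ with $K_i$ a face of $Q_i$; applied to $Q$ and then to $Q\cap H_{F,G}$ (where the second factor is an affine subspace, so contributes no faces of its own), every face of $\bis_{F,G}^P(a,b)$ is $(C_{F'}+a)\cap(C_{G'}+b)\cap H_{F,G}$ for faces $F'\subseteq F$, $G'\subseteq G$, and since the restriction of $\dist^P(0,\cdot)$ to $C_{F'}$ agrees with $\ell_F$, this set is exactly $\bis_{F',G'}^P(a,b)$. In particular your worry about a ``third source'' of supporting hyperplanes coming from $H_{F,G}$ itself is unfounded, and the degenerate situations ($H_{F,G}$ empty or all of $\mathbb{R}^d$, as happens when $\ell_F=\ell_G$) cause no harm since these are still polyhedra. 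With these two standard polyhedral facts inserted, your proof is complete.
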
 

See Figure~\ref{fig:1_dim_l1_R2} for an example of the polyhedrality of the bisector under the $\ell_{1}$-norm in dimension~$2$. Figure~\ref{fig:2_dim_l1_R2} shows an instance of full-dimensional cells, the presence of which is explained ahead.

There will certainly be pairs of facets $(F, G)$ for which the cells $\bis \nolimits_{F,G}^P(a,b)$ will be empty. We will investigate the emptiness/non-emptiness of $\bis \nolimits_{F, G}^{P}(a, b)$ condition given a pair of facets $(F, G)$ of the polytope. Relating bisectors to one another on the basis of this condition was the motivation for the following notion of equivalence of bisectors; this was first considered in~\cite{tropvoronoi} in the case of the tropical norm. 

\begin{dfn}\label{def:equivalencebisectors}
    Two bisectors $\bis^P(a,b)$ and $\bis^P(a',b')$ are said to be \textbf{equivalent} if
\begin{equation}\label{def:equivalence}
    \bis \nolimits _{F,G}^P(a,b) \neq \emptyset \Leftrightarrow \bis \nolimits_{F,G}^P(a',b') \neq \emptyset \tag{E}
\end{equation}

 for all choices of facets $F,G$. 
\end{dfn}
This defines an equivalence relation on pairs of points $(a,b)$ in $\R^d \times \R^d$. Henceforth, we will use these two notions of equivalence interchangeably. We will study the equivalence classes as geometric sets and aim for a  polyhedral understanding of this equivalence relation. Of fundamental importance in this context is the notion of general position. A pair of points $(a,b)$ is said to be in \textbf{weak general position} if $a-b$ is not contained in any hyperplane parallel to a facet of $P$. The following result explains the geometric relevance of this notion. 
 \begin{prop}[{\cite[Proposition 2]{tropvoronoi}}]\label{prop:weakgeneral}
 For any two points $a,b\in \R^d$, $bis^P(a,b)$ contains full-dimensional cells if and only if $a-b$ is contained in a hyperplane parallel to a facet of $P$.
 \end{prop}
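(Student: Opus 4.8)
The plan is to characterize when $\bis^P(a,b)$ has a full-dimensional cell by analyzing the local structure of the distance function near a would-be full-dimensional piece of the bisector. The key observation is that a cell $\bis^P_{F,G}(a,b)$ is full-dimensional precisely when, on an open set, the two functions $x\mapsto\dist^P(a,x)$ and $x\mapsto\dist^P(b,x)$ agree; since these functions are linear on the translated face cones $C_F+a$ and $C_G+b$ respectively, on the overlap of their interiors each is an affine function, and two affine functions that agree on an open set must be identical. So I would first reduce to: $\bis^P(a,b)$ has a full-dimensional cell if and only if there exist faces $F,G$ of $P$ such that the affine function $\ell_F$ representing $\dist^P(a,\cdot)$ on $C_F+a$ and the affine function $\ell_G$ representing $\dist^P(b,\cdot)$ on $C_G+b$ are equal as affine functions, and the region $\interior(C_F+a)\cap\interior(C_G+b)$ where both representations are valid is nonempty.

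Next I would identify the linear parts. If $F$ is a facet of $P$ with outer facet normal $u_F$ (normalized so that the facet lies in $\{y:\inner{u_F}{y}=1\}$), then for $y\in C_F$ one has $\dist^P(0,y)=\inner{u_F}{y}$, hence $\dist^P(a,x)=\inner{u_F}{x-a}$ for $x\in C_F+a$. Likewise $\dist^P(b,x)=\inner{u_G}{x-b}$ on $C_G+b$. These two affine functions are identical exactly when $u_F=u_G$, i.e. when $F$ and $G$ have the same outer normal; by central symmetry of $P$ the facets with a given normal direction are unique, so $F=G$ is forced, and then the constant terms must match, giving $\inner{u_F}{a}=\inner{u_F}{b}$, that is, $\inner{u_F}{a-b}=0$. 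Equivalently, $a-b$ lies in the linear hyperplane parallel to the facet $F$. This already shows the ``only if'' direction: if some cell is full-dimensional, then $a-b$ is parallel to a facet.

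For the converse, suppose $a-b$ is parallel to the facet $F$, so $\inner{u_F}{a}=\inner{u_F}{b}$. Then on the set $U=\interior(C_F+a)\cap\interior(C_F+b)$ both distance functions equal the same affine function $x\mapsto\inner{u_F}{x-a}=\inner{u_F}{x-b}$, so $U\subseteq\bis^P_{F,F}(a,b)$, and it remains to check $U\neq\emptyset$. The cones $C_F$ and $C_F$ are equal and full-dimensional (since $F$ is a facet, $C_F$ is $d$-dimensional), and $a-b$ lies in the hyperplane parallel to $F$; the main point to verify is that translating a full-dimensional cone by a vector parallel to its defining facet keeps the interiors overlapping. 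Concretely, pick a point $p$ in the relative interior of $F$; then $\inner{u_F}{p}=1$ and $p$ lies in the interior of $C_F$ (as $F$ is a facet, a relative-interior point of $F$ scaled slightly is interior to the cone), and for large $t$ the point $a+tp$ lies in $\interior(C_F+a)$, while $a+tp=b+(a-b)+tp=b+t(p+\tfrac1t(a-b))$; since $a-b$ is parallel to $F$, the vector $p+\tfrac1t(a-b)$ is still close to $p$ and hence in $\interior(C_F)$ for $t$ large, so $a+tp\in\interior(C_F+b)$ as well. Thus $U\neq\emptyset$ and the cell is full-dimensional.

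I expect the main obstacle to be the nonemptiness verification in the converse direction — making precise that a full-dimensional face cone translated parallel to its own facet still meets its untranslated interior, and handling the degenerate possibility that $a=b$ (excluded, or trivial) cleanly. The linear-algebra reduction (two affine functions agreeing on an open set are equal, plus reading off facet normals) is routine; the geometric lemma about overlapping translated cones is where a small amount of care is needed, and one should also note that it suffices to produce \emph{one} full-dimensional cell, which is why fixing $F=G$ works and no genuinely two-facet analysis is required.
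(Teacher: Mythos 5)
Your proof is correct: the paper does not reprove this statement (it is cited from Criado--Joswig--Santos), but your argument --- affine functions agreeing on an open set are identical, forcing $u_F=u_G$ and hence $F=G$ and $\inner{u_F}{a-b}=0$, plus the explicit point $a+tp$ for large $t$ to verify nonemptiness in the converse --- is the standard one and also establishes the paper's follow-up remark that only the diagonal cells $\bis_{F,F}^P(a,b)$ can be full-dimensional. The only cosmetic quibble is that the affine representations are valid on the closed translated cones, so in the ``only if'' direction you just need an open subset of the full-dimensional cell, not of $\interior(C_F+a)\cap\interior(C_G+b)$; this does not affect the argument.
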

 In fact, it can be seen that $\bis _{F,F}^P(a,b)$ is full-dimensional if and only if $a-b$ is contained in a hyperplane parallel to the facet $F$; otherwise $\bis _{F,F}^P(a,b)$ is empty. See Figure~\ref{fig:2_dim_l1_R2} for an example.
 
 A pair of points $(a,b)$ is said to be in \textbf{general position} if there exist open neighborhoods $U$ of $a$ and $W$ of $b$ such that $\bis^P(a,b)$ and $\bis^P(a',b')$ are equivalent for all $a'\in U$ and $b'\in W$. In other words, $(a,b)$ is in general position if its equivalence class of $\bis^{P}(a, b)$ under the relation specified given in~\eqref{def:equivalence} remains unchanged under small perturbations, that is, $(a,b)$ lies in the interior of its equivalence class. By Proposition~\ref{prop:weakgeneral} any pair of points in general position is also in weak general position. 
 
 The general position of points has strong geometric implications, as the next result on the presence of a single cell $\bis _{F,G}^P(a,b)$ demonstrates.
  \begin{thm}[{\cite[Theorem 1]{tropvoronoi}}]\label{thm:generalposition}
  Given $(a,b)\in \mathbb{R}^2$ and facets $F,G$ of $P$. Let $\lambda _F,\lambda _G$ be the restrictions of $\dist(0,\cdot)$ to $F$ and $G$, respectively. Then the following are equivalent.
\begin{itemize}
    \item[(i)] There are neighborhoods $U$ and $W$ of $a$ and $b$, respectively, such that for any choice of $a'$ in $U$ and $b'$ in $W$ the cell $\bis _{F,G}^P(a',b')$ is non-empty;
    \item[(ii)] $(C_{F}+a)\cap (C_{G}+b)$ is full-dimensional and is intersected in the interior by the (non-degenerate) hyperplane $H=\{x\in \mathbb{R}^d\colon \lambda _F(x-a)=\lambda _G (x-b)\}$.
\end{itemize}
 \end{thm}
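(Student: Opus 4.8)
The plan is to reduce each cell $\bis\nolimits_{F,G}^P(a,b)$ to the intersection of a moving hyperplane with a moving polyhedral cone, and then to recognise the ``stably non-empty'' condition~(i) as the statement that $a-b$ lies in the interior of a fixed polyhedral cone, which will turn out to be exactly what~(ii) describes. The first step is to linearize: since $\dist\nolimits^P(0,\cdot)$ restricts to the linear functional $\lambda_F$ on $C_F$ and to $\lambda_G$ on $C_G$, for $x\in (C_F+a)\cap(C_G+b)$ one has $\dist\nolimits^P(a,x)=\lambda_F(x-a)$ and $\dist\nolimits^P(b,x)=\lambda_G(x-b)$, so that $\bis\nolimits_{F,G}^P(a,b)=H\cap (C_F+a)\cap (C_G+b)$ with $H=\{x\colon \lambda_F(x-a)=\lambda_G(x-b)\}$. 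Writing $\ell:=\lambda_F-\lambda_G$ for the normal of $H$, the hyperplane is non-degenerate exactly when $\ell\neq 0$, i.e. when $F\neq G$; if $F=G$ then~(ii) fails by hypothesis and~(i) fails as well, because $H=\{x\colon\lambda_F(x-a')=\lambda_F(x-b')\}$ is empty whenever $\lambda_F(a')\neq\lambda_F(b')$, hence for a dense set of perturbations of $(a,b)$. So I may assume $F\neq G$.

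Next I would eliminate the translational degree of freedom. Substituting $u:=x-a$ and $w:=a-b$ turns the description above into $\bis\nolimits_{F,G}^P(a,b)-a=\{u\in C_F\colon u+w\in C_G,\ \ell(u)=\lambda_G(w)\}$, which is the fibre over $w$ of the polyhedral cone
\[
E:=\{(u,w)\in\R^d\times\R^d\colon u\in C_F,\ u+w\in C_G,\ \ell(u)=\lambda_G(w)\}
\]
under the projection $\pi(u,w):=w$. Thus $\bis\nolimits_{F,G}^P(a,b)\neq\emptyset$ if and only if $w\in D:=\pi(E)$, again a polyhedral cone; and since $(a,b)\mapsto a-b$ is an open surjection, condition~(i) is equivalent to $a-b\in\interior D$.

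It remains to identify $\interior D$ with~(ii), for which I would first do the requisite dimension bookkeeping. Since $\ell\neq 0$ maps onto $\R$, the hyperplane $L=\{(u,w)\colon \ell(u)=\lambda_G(w)\}$ projects onto $\R^d$, so once $\aff E=L$ we get that $D$ is full-dimensional and $\interior D=\relint D$. To check $\aff E=L$ it suffices to exhibit $(u,w)\in E$ with $u\in\interior C_F$ and $u+w\in\interior C_G$: take $u\in\interior C_F$, so $\lambda_F(u)>0$ because $0\in\interior P$ makes $C_F$ a pointed full-dimensional cone on which $\lambda_F$ vanishes only at the apex, and note that $\lambda_G$ maps $\interior C_G-u$ onto $(-\lambda_G(u),\infty)$, which contains $\ell(u)$; hence a $w$ with $u+w\in\interior C_G$ and $\lambda_G(w)=\ell(u)$ exists. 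The same point shows that no conical inequality cutting out $C_F$ or $C_G$ is an implicit equality of $E$, so $\relint E=\{(u,w)\colon u\in\interior C_F,\ u+w\in\interior C_G,\ \ell(u)=\lambda_G(w)\}$. Now $\pi(\relint E)=\relint\pi(E)=\interior D$, so $a-b\in\interior D$ iff there is $u$ with $(u,a-b)\in\relint E$, which after substituting $x=u+a$ says there is $x$ with $x-a\in\interior C_F$, $x-b\in\interior C_G$ and $\lambda_F(x-a)=\lambda_G(x-b)$; using $\interior\big((C_F+a)\cap (C_G+b)\big)=\interior(C_F+a)\cap\interior(C_G+b)$, this is exactly the assertion that $H$ meets the interior of $(C_F+a)\cap(C_G+b)$, which also forces that intersection to be full-dimensional. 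Hence~(i)$\Leftrightarrow$(ii).

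The step I expect to be the main obstacle is the dimension bookkeeping in the last paragraph: one must argue carefully that $E$ is full-dimensional inside $L$ — equivalently, that none of the conical inequalities degenerates to an equality on $E$ — since this is what both makes $D$ full-dimensional and yields the clean formula for $\relint E$, and it is precisely here that the hypotheses that $P$ contains the origin in its interior and that $F,G$ are facets (forcing $\lambda_F,\lambda_G$ to be strictly positive away from the apex of their face cones) get used. A more hands-on alternative would avoid cones entirely: for~(ii)$\Rightarrow$(i) pick $p\in H\cap\interior\big((C_F+a)\cap(C_G+b)\big)$, note that $p$ remains in $\interior(C_F+a')\cap\interior(C_G+b')$ for $(a',b')$ near $(a,b)$ while the perturbed hyperplane is a small parallel translate of $H$, and apply the intermediate value theorem to $\ell$ on a short transversal segment through $p$; for the contrapositive of~(i)$\Rightarrow$(ii), if $(C_F+a)\cap(C_G+b)$ is not full-dimensional then, $C_F-C_G$ being a closed polyhedral cone, an arbitrarily small translation of $b$ makes the two translated cones disjoint, while if it is full-dimensional but misses $H$ in its interior then $H$ can be translated off it — but tracking how the intersection shifts relative to the moving hyperplane makes this route no shorter.
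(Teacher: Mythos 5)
Your argument is correct. Note first that the paper itself does not prove this statement: it is imported verbatim as \cite[Theorem~1]{tropvoronoi}, so there is no internal proof to compare against. Measured against the direct perturbation argument in that source (show that ``$H$ meets the interior of the full-dimensional cone $(C_F+a)\cap(C_G+b)$'' is an open condition in $(a,b)$, and conversely that its failure can be forced by an arbitrarily small perturbation), your route is genuinely different in packaging: you encode all fibres $\bis_{F,G}^P(a,b)$ at once as the fibres of the polyhedral cone $E$ over the cone $D=\pi(E)$, so that condition (i) becomes $a-b\in\interior D$ via openness of $(a,b)\mapsto a-b$, and the entire stability analysis reduces to the standard facts $\relint(K\cap L)=\interior K\cap L$ (when they meet) and $\pi(\relint E)=\relint\pi(E)$. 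The key non-formal input --- exhibiting a point of $E$ with $u\in\interior C_F$ and $u+w\in\interior C_G$, which you do correctly using $\lambda_F(u)>0$ on $\interior C_F$ and surjectivity of $\lambda_G$ onto $(0,\infty)$ on $\interior C_G$ --- is exactly what makes the relative-interior formulas applicable, and your handling of the degenerate case $F=G$ (both (i) and (ii) fail) is also right. What your approach buys is that the two implications collapse into a single identification $\interior D=\{a-b\colon H\text{ meets }\interior((C_F+a)\cap(C_G+b))\}$, at the cost of invoking the Rockafellar-style calculus of relative interiors rather than elementary $\varepsilon$-arguments; it also makes transparent that the set of ``good'' difference vectors $a-b$ is the interior of a polyhedral cone, which is precisely the bisection cone $\mathcal{B}_{F,G}$ studied in Section~\ref{sec:bisectioncones}.
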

     Hence for points $(a, b)$ in general position, each maximal cell $\bis^{P}_{F, G}(a, b)$ is either $(d-1)$-dimensional or empty. Consequently, the following holds.
 \begin{cor}[{\cite[Corollary 2]{tropvoronoi}}]
 The bisector $\bis^P(a,b)$ of two points in general position is a pure polyhedral complex of dimension $d-1$.
 \end{cor}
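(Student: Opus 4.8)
The plan is to deduce the statement purely formally from the polyhedral complex structure of Proposition~\ref{intro_cells_of_bis} together with the cell dichotomy recorded in the paragraph following Theorem~\ref{thm:generalposition}: for $(a,b)$ in general position, $\bis\nolimits^P_{F,G}(a,b)$ is either $(d-1)$-dimensional or empty whenever $F,G$ are facets of $P$. Writing $\mathcal{C}$ for the complex, ``pure of dimension $d-1$'' means that every inclusion-maximal cell of $\mathcal{C}$ has dimension $d-1$, so I would split the proof into three points: (i) $\mathcal{C}$ is non-empty; (ii) no cell of $\mathcal{C}$ has dimension exceeding $d-1$; (iii) every cell of dimension strictly less than $d-1$ is properly contained in another cell.

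Point (ii) is immediate: general position implies weak general position, so by Proposition~\ref{prop:weakgeneral} no cell of $\bis\nolimits^P(a,b)$ is full-dimensional, whence $\dim \bis\nolimits^P_{F,G}(a,b)\le d-1$ for all faces $F,G$; note this already forces $a\neq b$, since $\bis\nolimits^P(a,a)=\R^d$. The substance of the argument is point (iii), and the key is a monotonicity of cells along the face order of $P$: if $F\subseteq F'$ and $G\subseteq G'$ are faces of $P$ then $C_F\subseteq C_{F'}$ and $C_G\subseteq C_{G'}$, hence
\[
\bis\nolimits^P_{F,G}(a,b)=\bis\nolimits^P(a,b)\cap(a+C_F)\cap(b+C_G)\ \subseteq\ \bis\nolimits^P(a,b)\cap(a+C_{F'})\cap(b+C_{G'})=\bis\nolimits^P_{F',G'}(a,b).
\]
Given a non-empty cell $\bis\nolimits^P_{F,G}(a,b)$ of dimension $<d-1$, neither $F$ nor $G$ can be the empty face, since that cell would equal $\{a\}\cap\bis\nolimits^P(a,b)$, which is empty because $a\neq b$; so I may choose facets $F'\supseteq F$ and $G'\supseteq G$ of $P$. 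By the displayed inclusion the cell sits inside $\bis\nolimits^P_{F',G'}(a,b)$, which is therefore non-empty, hence $(d-1)$-dimensional by the Theorem~\ref{thm:generalposition} dichotomy. As the dimensions differ, the inclusion is proper, proving (iii).

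For (i), $\bis\nolimits^P(a,b)\neq\emptyset$ because $x\mapsto \dist^P(a,x)-\dist^P(b,x)$ is continuous and changes sign on the segment between $a$ and $b$, hence vanishes somewhere; since the face cones of the facets of $P$ cover $\R^d$ (the face fan $\mathcal{F}_P$ is complete, as $0$ lies in the interior of $P$), this zero lies in some $\bis\nolimits^P_{F,G}(a,b)$ with $F,G$ facets, which is then non-empty and hence $(d-1)$-dimensional. Combining (i)--(iii): a $(d-1)$-dimensional cell exists, it cannot be properly contained in any cell (in a polyhedral complex a proper containment of cells strictly drops the dimension, and by (ii) nothing has dimension $>d-1$), so it is inclusion-maximal, while by (iii) no lower-dimensional cell is inclusion-maximal; thus every maximal cell has dimension exactly $d-1$. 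I do not expect a genuine obstacle here, since all the geometric work is already in Theorem~\ref{thm:generalposition}; the only points needing care are the reduction in (iii) from arbitrary faces to facets (and excluding the empty face using $a\neq b$) and the elementary observation that distinct equal-dimensional cells of a polyhedral complex are never nested.
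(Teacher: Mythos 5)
Your proof is correct and follows essentially the same route as the paper, which (citing the original reference) deduces purity directly from the dichotomy after Theorem~\ref{thm:generalposition} that each facet-pair cell $\bis\nolimits^P_{F,G}(a,b)$ is either $(d-1)$-dimensional or empty. You merely make explicit the routine details the paper leaves implicit --- non-emptiness of the bisector, the absence of full-dimensional cells via Proposition~\ref{prop:weakgeneral}, and the monotonicity $\bis\nolimits^P_{F,G}\subseteq\bis\nolimits^P_{F',G'}$ showing every lower-dimensional cell sits inside a facet-pair cell --- all of which check out.
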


 We observe that, up to translation, $\bis(a,b)$ equals $\bis(0,b-a)$ for all $a,b$. Further, $\bis^P_{F,G}(a,b)$ is non-empty if and only if $\bis^P(ta,tb)$ is non-empty for all $t>0$. It follows that the equivalence class of $\bis^P(a,b)$ is completely determined by the direction of $b-a\in \mathbb{R}^d$. In particular, it induces an equivalence relation on $\mathbb{R}^{d}$. Similarly as for pairs of points, we also call a point $c\in \R^d$ in general position if there exists a neighborhood $U$ of $c$ such that  $\bis^P(0,c)$ is equivalent to $\bis^P(0,c')$ for all $c'\in U$. It follows that a pair of points $(a,b)$ is in general position if and only if $b-a$ is in general position.

 The equivalence relation~\eqref{def:equivalence} captures rudimentary combinatorial properties of bisectors, namely the emptiness or non-emptiness of its cells $\bis_{F,G}^P(a,b)$. Since for any $a\in \mathbb{R}^d$, $\bis^P_{F,G}(0,a)$ is non-empty if and only if $\bis^P(0,ta)$ is non-empty for all $t>0$, the equivalence classes are closed under scaling. From a discrete-geometric perspective it is natural to ask if the equivalence classes have an interesting polyhedral structure, in particular, if the equivalence classes are given by polyhedral cones that have a combinatorial meaning. In~\cite[Theorem 5]{tropvoronoi} it was shown that this is indeed the case for the tropical norm. In this case the equivalence class of $\bis (0,a)$ is determined by the so-called bisected ordered partition induced by the coordinates of $a$. The bisected ordered partitions induce a fan structure on $\mathbb{R}^d$, called the bisection fan in~\cite{tropvoronoi}. We generalize this concept to arbitrary polyhedral norms.

\begin{dfn}\label{bisfandef}
 Let $P$ be a centrally symmetric polytope and let $\Delta$ be a polyhedral fan. Then $\Delta$ is the \textbf{bisection fan} of $P$ if and only if for all $a,b\in \R^d$ in general position the following two conditions are equivalent.
 \begin{itemize}
     \item[(i)] The bisectors $\bis^P(0,a)$ and $\bis^P(0,b)$ are equivalent.
     \item[(ii)] Both points $a$ and $b$ lie in the interior of the same maximal cone of $\Delta$.
 \end{itemize}
 \end{dfn}
 
The question of whether or not the bisection fan exists for every polyhedral distance function is subtle and does not seem to be easily resolved, a priori. We approach this question by investigating geometric and combinatorial properties of a natural family of cones, the bisection cones that we introduce in the next section. They form the constituent pieces of the equivalence classes induced by the relation in Equation \eqref{def:equivalence}. In subsequent sections, we then show that the bisection fan exists for polygonal norms in $2$ dimensions, the $\ell_1$- and the $\ell_\infty$-norm as well as the discrete Wasserstein norm in arbitrary dimensions.

\begin{figure}[H]
    \centering
    \begin{tikzpicture}
\definecolor{vertexcolor_unnamed__1}{rgb}{ 1 0 0 }
 \definecolor{facetcolor_unnamed__1}{rgb}{ 0.4666666667 0.9254901961 0.6196078431 }
    \node [
    regular polygon, 
    regular polygon sides=4, 
    minimum size=4 cm, shape border rotate = 45, scale = 1,
    draw=black, ultra thick
] 
at (0,0) (A){};
\node[dot=4pt, fill=black, label=left:{$0$}] at (A.center) {};
\node [
    regular polygon, 
    regular polygon sides=4, 
    minimum size=4 cm, shape border rotate = 45, scale = 1,
    draw=black, thick
] 
at ($(15:1.25)+(0, 1.4)$) (B){};
\node[dot=4pt, fill=black, label={[xshift=0.25cm, yshift=0.25cm]{$a$}}] at (B.center) {};
\node[dot=0pt, fill=black, label={[xshift=0.85cm, yshift=-0.75cm]{$1$}}] at (A.corner 1) {};
\node[dot=0pt, fill=black, label={[xshift=0.6cm, yshift=0.8cm]{$2$}}] at (A.corner 2) {};
\node[dot=0pt, fill=black, label={[xshift=-1.5cm, yshift=0.8cm]{$3$}}] at (A.corner 3) {};
\node[dot=0pt, fill=black, label={[xshift=1.5cm, yshift=0.8cm]{$4$}}] at (A.corner 3) {};
\draw [shorten >= -3.15cm, shorten <=0cm] 
(A.center) [dotted, thick] edge (A.corner 4);
\draw [shorten >= -2cm, shorten <=0cm] 
(B.center) [dotted, thick] edge (B.corner 4);

\draw [shorten >= -1cm, shorten <=-2cm] (B.center) [dotted, thick] edge (B.corner 1);
\path (A) -- (B) coordinate[midway] (M);

\path[name path=a_to_a_corner1] (A.center)--(A.corner 1); 
\path[name path=m_135] (M) -- ($(M)+(135:1)$);
\fill [red, name intersections={of=a_to_a_corner1 and m_135}] (intersection-1) circle (2pt);
\coordinate (S) at (intersection-1);
\node[dot=4pt, fill=red] at (S){};
\draw[blue, thick, line width = 0.5mm] (M) -- (S);
\draw[blue, thick, line width = 0.5mm] (S) -- ($(S)+(-3, 0)$) node[below, label={[xshift=0cm, yshift=0cm]{$\bis_{2,3}(0, a)$}}] {};

\path[name path=b_to_b_corner3] (B.center)--(B.corner 3); 
\path[name path=m_315] (M) -- ($(M)+(-45:2)$);
\fill [blue, name intersections={of=b_to_b_corner3 and m_315}] (intersection-1) circle (2pt);
\coordinate (T) at (intersection-1);
\draw[blue, thick, line width = 0.5mm] (M) -- (T) node[below] {};
\draw[blue, thick, line width = 0.5mm] (T) -- ($(T)+(3, 0)$) node[below, label={[xshift=-0.2cm, yshift=0cm]{$\bis_{1,4}(0, a)$}}] {};

\path[name path=a_to_a_corner4] (A.center)--(A.corner 4); 
\path[name path=b_to_b_corner_3] (B.center) -- ($(B.corner 3)+(0, -1)$);
\fill [blue, name intersections={of=a_to_a_corner4 and b_to_b_corner_3}] (intersection-1) circle (0pt);
\coordinate (Z)  at (intersection-1);
\path let \p2= ($(A.center)+(4, 0)$), \p3= ($(B.center)+(4, 0)$) in [fill=blue,opacity=0.05] (B.center) -- (Z) -- (\x3, \y2) -- ($(B.center)+(4, 0)$) -- cycle;
\node [
    regular polygon, 
    regular polygon sides=4,
    minimum size= 2.9 cm, shape border rotate = 45, scale = 1,
    draw=red, very thick, label={[xshift=0cm, yshift=-1.45cm]$x$}
] 
    at (S) (C){};

\draw (A.corner 3) [dotted, thick] edge (S);
\draw (A.corner 1) [dotted, thick] edge ($(A.corner 1)+(0, 2)$);

\end{tikzpicture}
\caption{The unit ball $P$ of the $\ell_{1}$-norm with facets labelled $1,2,3,4$ with thickened edges. The bisector $\bis^{P}(0, a)$ is marked in blue. The point $x$ lies in $\bis^{P}(0, a)$ since the red, scaled unit ball centered at $x$ contains $0$ and $a$ simultaneously in the boundary. The shaded region is $C_{1} \cap (C_{4}+a)$, the intersection of the cones over the facet $1$ and $4$ with apices $0$ and $a$, respectively. The maximal cell $\bis_{1, 4}(0, a)$ is contained in this region.}
    \label{fig:1_dim_l1_R2}
\end{figure}
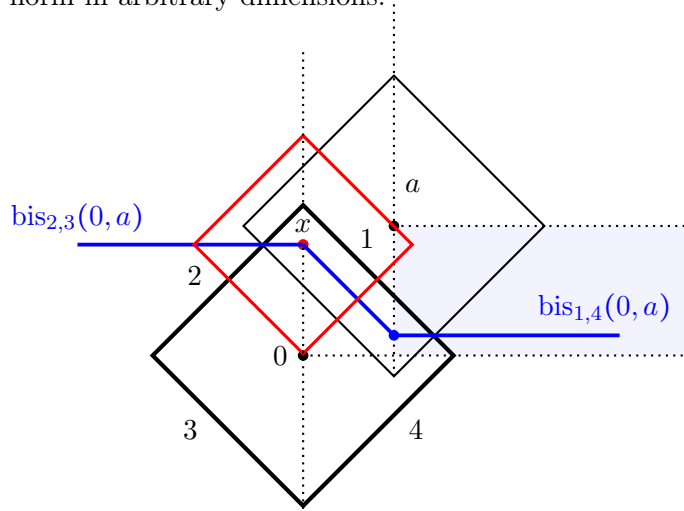

\begin{figure}
    \centering
    \vspace{-0.2 cm}
    \scalebox{0.75}{
    \begin{tikzpicture}
\definecolor{vertexcolor_unnamed__1}{rgb}{ 1 0 0 }
 \definecolor{facetcolor_unnamed__1}{rgb}{ 0.4666666667 0.9254901961 0.6196078431 }
    \node [
    regular polygon, 
    regular polygon sides=4, 
    minimum size=4 cm, shape border rotate = 45, scale = 1,
    draw=black, thick
] 
at (0,0) (A){};
\node[dot=4pt, fill=black, label={[font=\Large]left:{$a$}}] at (A.center) {};
\node [
    regular polygon, 
    regular polygon sides=4, 
    minimum size=4 cm, shape border rotate = 45, scale = 1,
    draw=black, thick
] 
at (45:4) (B){};
\draw [shorten >= -3cm, shorten <=-3cm] (A.center) [dotted, thick] edge (B.center);
\node[dot=4pt, fill=black, label=left:{\Large $b$}] at (B.center) {};
\path (A) -- (B) coordinate[midway] (M);

\path[name path=ac1] (A.corner 1)--($(A.corner 1)+(0, 3)$); 
\path[name path=bd1] (M)--($(M)+(-3, 3)$); 
\fill [blue, name intersections={of=ac1 and bd1}]
(intersection-1) circle (2pt);
\coordinate (q)  at (intersection-1);

\path[name path=ac2] (B.corner 3)--($(B.corner 3)+(0, -3)$); 
\path[name path=bd2] (M)--($(M)+(3, -3)$); 
\fill [blue, name intersections={of=ac2 and bd2}]
(intersection-1) circle (2pt);
\coordinate (r)  at (intersection-1);

\coordinate (s) at ($(q)+(-4,0)$);
\coordinate (t) at ($(r)+(4,0)$);

\draw[blue, thick, line width = 0.5mm] (q) -- (M);
\draw[blue, thick, line width = 0.5mm] (r) -- (M);
\draw[blue, thick, line width = 0.5mm] (q) -- ($(q)+(0, 4)$) node[below, label={[xshift=1.3cm, yshift=-1.8cm] \Large $\bis^{P}(a, b)$}] {};
\draw[blue, thick, line width = 0.5mm] (q) -- (s);
\draw[blue, thick, line width = 0.5mm] (r) -- ($(r)+(0, -4)$);
\draw[blue, thick, line width = 0.5mm] (r) -- (t);
\path[fill=blue,opacity=0.1] (q) -- ($(q)+(0, 4)$) -- (s) -- cycle;
\path[fill=blue,opacity=0.1] (r) -- ($(r)+(0, -4)$) -- (t) -- cycle;
\end{tikzpicture}
}
    \caption{A $2$-dimensional bisector of two points under the $\ell_{1}$-norm in dimension $2$. Since $a-b$ is parallel to a facet of the unit ball, $\bis(a, b)$ contains full-dimensional cells.}
    \label{fig:2_dim_l1_R2}
\end{figure}
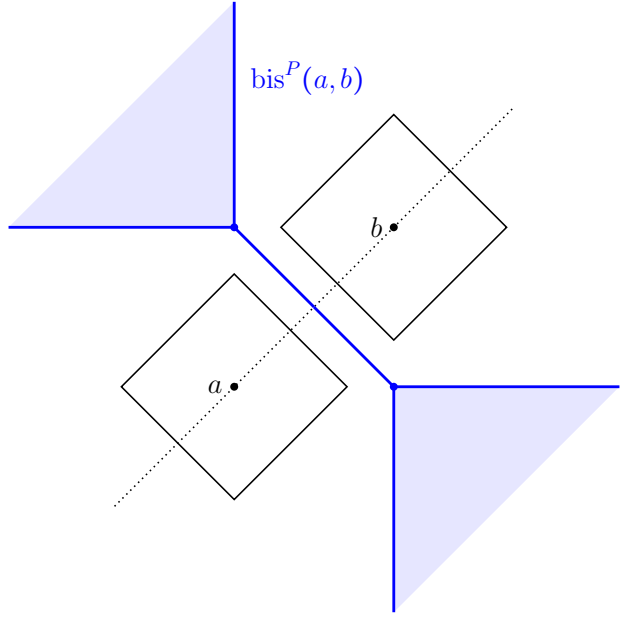
     \section{Bisection cones and bisection fans}\label{sec:bisectioncones}
    In this section we collect structural results concerning the bisection fan and related objects.
    
    Given a polytope $P$ and facets $F$ and $G$ of $P$, the \textbf{bisection cone} (with respect to the pair $(F,G)$) is defined as
    \[
    \mathcal{B}_{F, G} = \left\{x \in \mathbb{R}^{d}: \bis \nolimits_{F, G}(0, x) \neq \emptyset\right\}. 
    \]
    The bisection cones form the building blocks for the equivalence classes of bisectors in the following sense. By Definition~\ref{def:equivalencebisectors}, two points $a,b\in \mathbb{R}^d$ are equivalent if and only if $\bis_{F,G}(0, a)$ and $\bis_{F,G}(0, b)$ are simultaneously empty/non-empty for all facets $F,G$ of $P$. The latter condition can be restated as $a \in \mathcal{B}_{F, G}$ if and only if $b \in \mathcal{B}_{F, G}$ for every facet pair $(F, G)$. Thus, the equivalence classes are given precisely by sets of the form  

    \begin{equation}\label{e_s}
       E_{S} = \bigcap _{(F,G)\in S}\mathcal{B}_{F,G} \cap \bigcap_{(F,G)\in S^c}\mathcal{B}_{F,G}^c
    \end{equation}
    
    where $S$ is a subset of the set of pairs of facets and $\mathcal{B}_{F,G}^c=\mathbb{R}^d\setminus \mathcal{B}_{F,G}$ denotes the complement of the bisection cone.  
    
    The bisection cone $\mathcal{B}_{F,G}$ is always a non-empty polyhedral cone. One way to see this would be to consider the \textbf{homogenization} which is the polyhedral cone that is defined by
    \[\homog(P) = \cone(P \times \{1\}) = \{(x, \lambda) \in \mathbb{R}^d\times \mathbb{R}_{\geq 0}: x \in \lambda P\} \subset \mathbb{R}^{d+1}\]
    for any polytope $P\subset \mathbb{R}^d$. If $F$ is a facet, then the homogenization of $F$ captures the distance $\dist^P(0,x)$ for any point $x$ in the facet cone $C_F$:
    \[
    \homog(F) \ = \ \{ (x,\dist\nolimits ^P(0,x)): x \in C_F\} \, .
\]

With this, we have the following description of the bisection cone.

    \begin{prop}\label{projhom}
    Let $P\subset \mathbb{R}^d$ be a polytope having the origin in its interior and let $F,G$ be facets of $P$. Then
    \[
    \mathcal{B}_{F,G} \ = \ \proj \left((\homog(F)+(-\homog(G)))\cap \{x \in \mathbb{R}^{d+1}: x_{d+1} = 0\}\right)
    \]
    where $\proj\colon \mathbb{R}^{d+1}\rightarrow \mathbb{R}^d$ denotes the projection onto the first $d$ coordinates. 
        Moreover, for any facet $F=\{x\in P\colon z^Tx=b\}$  \begin{eqnarray*}
    \mathcal{B}_{F,-F} &=& C_F\\
    \mathcal{B}_{F,F} &=& \{x\in \mathbb{R}^d\colon z^Tx=0\} \, .
    \end{eqnarray*}
    In particular, $\mathcal{B}_{F,G}$ is a non-empty polyhedral cone for all facets $F,G$ of $P$.
    \end{prop}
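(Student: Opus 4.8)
The plan is to rewrite membership in $\mathcal{B}_{F,G}$ purely in terms of the linear functionals that compute $\dist^P(0,\cdot)$ on the face cones $C_F$ and $C_G$, to recognise the resulting condition as a linear slice and projection of $\homog(F)+(-\homog(G))$, and then to read off the two special cases and polyhedrality from this description. Let $\lambda_F$ and $\lambda_G$ be linear functionals agreeing with $\dist^P(0,\cdot)$ on $C_F$ and $C_G$; these exist because $\dist^P(0,\cdot)$ is linear on each face cone. By definition, $x\in\mathcal{B}_{F,G}$ if and only if there is a point $y$ with $y\in C_F$, $y-x\in C_G$, and $\dist^P(0,y)=\dist^P(x,y)$. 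Translation invariance gives $\dist^P(x,y)=\dist^P(0,y-x)$, and linearity on the two cones turns the bisector equation into $\lambda_F(y)=\lambda_G(y-x)$; setting $z:=y-x$, we get that $x\in\mathcal{B}_{F,G}$ exactly when $x=y-z$ for some $y\in C_F$, $z\in C_G$ with $\lambda_F(y)=\lambda_G(z)$.

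Next I would homogenise. Using $\homog(F)=\{(y,\dist^P(0,y)):y\in C_F\}=\{(y,\lambda_F(y)):y\in C_F\}$ (recorded just before the statement, and likewise for $G$), the pair $(y,z)$ above corresponds to the point $(y,\lambda_F(y))-(z,\lambda_G(z))=(y-z,\lambda_F(y)-\lambda_G(z))$ in the Minkowski sum $\homog(F)+(-\homog(G))$; the condition $\lambda_F(y)=\lambda_G(z)$ is precisely the vanishing of its last coordinate, and $\proj$ returns $y-z=x$. Since every point of $(\homog(F)+(-\homog(G)))\cap\{x_{d+1}=0\}$ arises this way, the projection formula follows. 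Polyhedrality is then standard: $\homog(F)$ and $\homog(G)$ are polyhedral cones, a Minkowski sum of polyhedral cones is polyhedral, and intersecting with a coordinate hyperplane and then projecting off a coordinate preserves polyhedrality; non-emptiness holds because $\bis_{F,G}(0,0)=C_F\cap C_G\ni 0$, so $0\in\mathcal{B}_{F,G}$.

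Finally the two special cases drop out of the projection formula. For $G=-F$, central symmetry of $P$ gives $\dist^P(0,-v)=\dist^P(0,v)$ and $C_{-F}=-C_F$, so $\homog(-F)=\{(-v,\lambda_F(v)):v\in C_F\}$; substituting yields $\mathcal{B}_{F,-F}=\{y+v:y,v\in C_F,\ \lambda_F(y)=\lambda_F(v)\}$, which equals $C_F$ (contained in $C_F$ as $C_F$ is a convex cone, and all of $C_F$ since $c=\tfrac12 c+\tfrac12 c$). For $G=F$ with $F=\{x\in P:z^Tx=b\}$ one has $\lambda_F(v)=z^Tv/b$ on $C_F$, so the formula gives $\mathcal{B}_{F,F}=\{y-y':y,y'\in C_F,\ z^Ty=z^Ty'\}\subseteq\{x:z^Tx=0\}$; for the reverse inclusion, $F$ being a facet makes $C_F$ full-dimensional, hence $C_F-C_F=\R^d$, so any $x$ with $z^Tx=0$ is $y-y'$ for some $y,y'\in C_F$, and then $z^Ty=z^Ty'$ is automatic.

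I do not expect a real obstacle: the only delicate point is the bookkeeping with the last coordinate in the homogenisation step — checking that the bisector equation corresponds exactly to the hyperplane $\{x_{d+1}=0\}$ rather than an affine translate of it — which the homogenisation identity quoted in the excerpt makes transparent.
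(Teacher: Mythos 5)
Your proposal is correct and follows essentially the same route as the paper: rewrite membership in $\mathcal{B}_{F,G}$ via the linearity of $\dist^P(0,\cdot)$ on face cones, identify $(x,0)$ with a point of $\homog(F)+(-\homog(G))$ in the hyperplane $x_{d+1}=0$, and read off the two special cases from this description (the paper's $\mathcal{B}_{F,-F}$ argument is your splitting $x=\tfrac{x}{2}+\tfrac{x}{2}$, and its $\mathcal{B}_{F,F}$ computation is your ``differences of points of $C_F$ with equal $z^T$-value'' argument in a slightly different parametrization). The only cosmetic difference is that you supply an explicit non-emptiness witness ($0\in\mathcal{B}_{F,G}$), which the paper leaves implicit.
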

    
    \begin{proof}
    A point $x\in \mathbb{R}^d$ is contained in $\mathcal{B}_{F,G}$ if and only if there exists a $y\in C_F\cap (C_G + x)$ with $\dist(0,y)=\dist(x,y)=\dist(0,y-x)$. In this case, $(y,\dist(0,y))$ is contained in the intersection $\homog(F) \cap (\homog(G)+(x,0))$. In particular, the intersection is non-empty and 
    \[(x,0)\in \homog (F)+(-\homog (G))\] which is equivalent to $x$ being contained in $\proj \left((\homog(F)+(-\homog(G)))\cap \{x \in \mathbb{R}^{d+1}: x_{d+1} = 0\}\right)$. This proves the first part of the proposition.
    
    For the second part, let $F$ and $-F$ be opposite facets of $P$. For any $x\in \mathbb{R}^d$ such that $(x,0)\in (\homog(F)+(-\homog(-F))$ we have that $x=a+(-(-b))$ for some $a,b\in C_F$. In particular, $x$ is contained in the cone $C_F$. This shows that $\mathcal{B}_{F,-F}$ is contained in $C_F$. For the other inclusion we observe that for any $x\in C_F$ we can decompose $(x,0)$ as
    \[
    (x,0) \ = \ \left(\tfrac{x}{2}, \dist(0,\tfrac{x}{2})\right)+\left(\tfrac{x}{2}, -\dist(0,\tfrac{x}{2})\right) \in \hom (F)+(-\homog(-F)) \, .
    \]
    
    Further, we observe that for any facet $F$,
    
    \[
    \homog(F) = \left\{ \left(\sum_{i=1}^{m}\lambda_{i}u_{i}, \sum_{i=1}^{m}\lambda_{i} \right): u_{i} \in F, \lambda_i \geq 0 \right\}
    \]
    Thus, 
    \begin{eqnarray*}
           \mathcal{B}_{F, F} &=& \proj \left((\hom(F)+(-\hom(F)))\cap \left\{x \in \mathbb{R}^{d+1}: x_{d+1}=0 \right\}\right)\\ &=& \left\{\sum_{i=1}^{m}(\lambda_{i} - \mu_{i})u_{i}: \sum_{i=1}^{m}(\lambda_{i}-\mu_{i}) = 0, u_{i} \in F, \lambda _i, \mu_i \geq 0 \right\}\\
           &=& \left \{\sum_{i=1}^{m}c_{i}u_{i}: \sum_{i=1}^{m}c_{i} = 0, u_{i} \in F \right\} 
    \end{eqnarray*}
    which is equal to $\{x \in \mathbb{R}^{d}: z^{T}x =0\}$, if $F=\{x\in P\colon z^Tx=b\}$.\qedhere \end{proof}

Let $\vertex (F)$ denote the set of vertices of a facet $F$. We have the following conical hull description of bisection cones.

\begin{prop}\label{raysbfg}
Let $P$ be a polytope and let $F, G$ be facets of $P.$ Then \[
\mathcal{B}_{F, G} = \cone(\{v - u: v \in \vertex(F), u \in \vertex(G)\}).
\]
\end{prop}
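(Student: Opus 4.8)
The plan is to prove the equality of the two cones by double inclusion, using the description from Proposition~\ref{projhom} as the starting point. Write $K = \cone(\{v-u : v\in\vertex(F),\, u\in\vertex(G)\})$ for the candidate cone.

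\textbf{The inclusion $K \subseteq \mathcal{B}_{F,G}$.} Since $\mathcal{B}_{F,G}$ is a cone (by Proposition~\ref{projhom}) and is closed under nonnegative combinations of its elements (this needs a small argument, since a priori $\mathcal{B}_{F,G}$ is only known to be a polyhedral cone — but being polyhedral and scaling-closed, it is convex, hence closed under conical combinations), it suffices to show each generator $v-u$ lies in $\mathcal{B}_{F,G}$, i.e.\ that $\bis_{F,G}(0, v-u)\neq\emptyset$. I would exhibit an explicit witness point $y$ in $\bis(0,v-u)\cap C_F\cap (C_G + (v-u))$. The natural candidate is $y = \tfrac12(v-u) \cdot (\text{suitable scaling})$, or more robustly: since $\dist^P(0,v)=\dist^P(0,-u)=1$ and $v\in C_F$, $-u\in -C_G = C_{-G}$ with $v + (-u) = v-u$, one checks that a point on the segment realizing both sides works. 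Concretely, take $y$ to be the point where the ray from $0$ through $v$ and the translate by $v-u$ of the ray from $0$ through $-u$... I would instead argue via homogenization directly: $(v,1)\in\homog(F)$ and $(-u,1)\in -\homog(G)$ (since $(u,1)\in\homog(G)$), so $(v-u, 0) = (v,1) + (-u,1) - (0,2)$... that is not quite in the Minkowski sum. The clean statement is $(v-u,0) = (v,1) + (-(u,1))\in \homog(F) + (-\homog(G))$, which lies in the hyperplane $\{x_{d+1}=0\}$, so by Proposition~\ref{projhom} its projection $v-u$ lies in $\mathcal{B}_{F,G}$. Conical combinations then stay in $\mathcal{B}_{F,G}$ by convexity.

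\textbf{The inclusion $\mathcal{B}_{F,G}\subseteq K$.} Let $x\in\mathcal{B}_{F,G}$. By Proposition~\ref{projhom}, $(x,0)\in \homog(F) + (-\homog(G))$, so $(x,0) = (p, t) + (-(q,t))$ with $(p,t)\in\homog(F)$, $(q,t)\in\homog(G)$ and the last coordinates forced equal to some common $t\ge 0$. Using the explicit form $\homog(F) = \{(\sum_i\lambda_i v_i, \sum_i\lambda_i): v_i\in\vertex(F),\ \lambda_i\ge0\}$ from the proof of Proposition~\ref{projhom}, and similarly for $G$ with coefficients $\mu_j$, we get $x = \sum_i\lambda_i v_i - \sum_j \mu_j u_j$ with $\sum_i\lambda_i = \sum_j\mu_j = t$. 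If $t=0$ then $x=0\in K$. If $t>0$, rescale so $\sum_i\lambda_i = \sum_j\mu_j = 1$; then $\sum_i\lambda_i v_i - \sum_j\mu_j u_j = \sum_{i,j}\lambda_i\mu_j (v_i - u_j)$ since $\sum_j\mu_j = 1$ and $\sum_i\lambda_i=1$. Each coefficient $\lambda_i\mu_j\ge 0$, so $x$ (up to the positive scalar $t$) is a conical combination of the generators $v_i-u_j$, hence $x\in K$.

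\textbf{Main obstacle.} The only genuinely delicate point is the bookkeeping that forces the two $(d{+}1)$-st coordinates to be equal when we write $(x,0)$ as an element of $\homog(F) + (-\homog(G))$: membership in the hyperplane $\{x_{d+1}=0\}$ gives $t_F - t_G = 0$ where $t_F, t_G$ are the heights of the two summands, so indeed $t_F = t_G =: t$. After that, the ``multiply out using the partition-of-unity'' identity $\sum_i\lambda_i v_i - \sum_j\mu_j u_j = \sum_{i,j}\lambda_i\mu_j(v_i - u_j)$ is the key algebraic trick and it is routine. I should double-check that $\mathcal{B}_{F,G}$ being scaling-closed together with polyhedral indeed yields closure under addition — this is immediate since a polyhedral cone is by definition convex, and Proposition~\ref{projhom} already asserts $\mathcal{B}_{F,G}$ is a polyhedral cone, so no extra work is needed; alternatively one can avoid relying on that and note the forward inclusion $\mathcal{B}_{F,G}\subseteq K$ shows $\mathcal{B}_{F,G}$ is contained in a genuine cone, while the reverse uses only membership of generators plus the fact that $K$ is a cone.
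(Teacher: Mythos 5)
Your proposal is correct and follows essentially the same route as the paper: both reduce to the homogenization description from Proposition~\ref{projhom}, verify the generators $v-u$ via $(v,1)-(u,1)\in\homog(F)+(-\homog(G))$, and for the reverse inclusion use the forced equality of the heights $\sum_i\lambda_i=\sum_j\mu_j$ together with the partition-of-unity identity $\sum_i\lambda_i v_i-\sum_j\mu_j u_j=\sum_{i,j}\lambda_i\mu_j(v_i-u_j)$. The only cosmetic difference is that the paper proves the equality of the two cones upstairs in $\mathbb{R}^{d+1}$ before projecting, whereas you work downstairs in $\mathcal{B}_{F,G}$ directly; this changes nothing of substance.
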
   

\begin{proof}
Let $\vertex(F)=\{p_1,\ldots, p_m\}$ and $\vertex(G)=\{q_1,\ldots, q_n\}$. By Proposition~\ref{projhom} it suffices to prove 
\[
(\homog(F)+(-\homog(G)))\cap \{x \in \mathbb{R}^{d+1}: x_{d+1} = 0\} = \cone\{ (p_{i} - q_{j}, 0): i=1, \ldots m, j =1, \ldots n\} \, .
\]

$\supseteq$: For all $1\leq i\leq m$ and $1\leq j\leq n$ we have $(p_i,1)\in \hom (F)$ and $(q_j,1)\in \hom (G)$. Thus, $(p_i-q_j,0)=(p_i,1)-(q_j,1)$ is contained in $(\homog(F)+(-\homog(G)))\cap \{x \in \mathbb{R}^{d+1}: x_{d+1} = 0\}$. Since the latter is a cone, the conical hull of all $(p_i-q_j,0)$ is also contained in it.

$\subseteq$: For the other inclusion, let $(w, 0) \in (\homog(F)+(-\homog(G)))\cap \{x \in \mathbb{R}^{d+1}: x_{d+1} = 0\}.$ If $w=0$ then $(w,0)$ is clearly also contained in the right hand side. In the other case, there exist coefficients $\lambda_{i}, \mu_{j} \geq 0$ for $ i =1, \ldots m$ and $j=1, \ldots ,n$, not all zero, such that 
\[
(w, 0) = \sum_{i=1}^{m}\lambda_{i}(p_{i}, 1) + \sum_{j=1}^{n}\mu_{j}(-q_{j}, -1)\, .
\]
Then the sum of the $\lambda_i$s and $\mu _j$s must necessarily be the same. Let $\sum_{i=1}^{m}\lambda_{i} = \sum_{j=1}^{n}\mu_{j}:=M >0$. Then
\[
\dfrac{w}{M} \ = \ \sum_{i=1}^{m}\dfrac{\lambda_{i}}{M}p_{i} -  \sum_{j=1}^{n}\dfrac{\mu_{j}}{M}q_{j} \ = \ \sum_{i, j}\dfrac{\lambda_{i}}{M}\dfrac{\mu_{j}}{M}(p_{i} - q_{j}) \, .
\]

This implies $(\frac{w}{M}, 0) \in \cone \{(p_{i} - q_{j}, 0): i=1, \ldots m, j =1, \ldots n\}$ and hence $(w, 0)$ is also contained in the right hand side.\qedhere
\end{proof}

As an immediate consequence of Proposition~\ref{raysbfg} we obtain the following symmetry properties of the bisection cone.
\begin{cor}\label{bfg_swap}
    For any facets $F,G$ of $P$ we have 
    \[
    \mathcal{B}_{G, F} \ = \ -\mathcal{B}_{F, G} \ = \ \mathcal{B}_{-F, -G} \, .
    \]
\end{cor}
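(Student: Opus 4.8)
The plan is to derive both equalities directly from the conical hull description in Proposition~\ref{raysbfg}, since that proposition reduces everything to elementary facts about vertex sets of facets and about cones. First I would recall that $\mathcal{B}_{F,G} = \cone(\{v-u : v\in\vertex(F),\, u\in\vertex(G)\})$. For the first equality $\mathcal{B}_{G,F} = -\mathcal{B}_{F,G}$, I would note that $\vertex(F)$ and $\vertex(G)$ simply swap roles, so the generating set for $\mathcal{B}_{G,F}$ is $\{u-v : u\in\vertex(G),\, v\in\vertex(F)\}$, which is exactly the negation of the generating set for $\mathcal{B}_{F,G}$; since $\cone(-A) = -\cone(A)$ for any set $A$, this gives $\mathcal{B}_{G,F} = -\mathcal{B}_{F,G}$.

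For the second equality $-\mathcal{B}_{F,G} = \mathcal{B}_{-F,-G}$, the key observation is that since $P$ is centrally symmetric, $-F$ and $-G$ are again facets of $P$, and $\vertex(-F) = -\vertex(F)$, $\vertex(-G) = -\vertex(G)$. Applying Proposition~\ref{raysbfg} to the pair $(-F,-G)$, the generating set is $\{v'-u' : v'\in\vertex(-F),\, u'\in\vertex(-G)\} = \{(-v)-(-u) : v\in\vertex(F),\, u\in\vertex(G)\} = \{u-v : v\in\vertex(F),\, u\in\vertex(G)\}$, which is again the negation of the generating set of $\mathcal{B}_{F,G}$. Hence $\mathcal{B}_{-F,-G} = -\mathcal{B}_{F,G}$, completing the chain of equalities.

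Both steps are essentially bookkeeping, so I do not anticipate a genuine obstacle; the only point requiring a moment's care is to make explicit that central symmetry of $P$ is what guarantees $-F$ and $-G$ are facets (so that $\mathcal{B}_{-F,-G}$ is even defined) and that the vertex set of the reflected facet is the reflection of the vertex set. Everything else follows from the identity $\cone(-A) = -\cone(A)$ applied to the relevant generating sets.
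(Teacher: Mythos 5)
Your proposal is correct and matches the paper's intent exactly: the paper states Corollary~\ref{bfg_swap} as an immediate consequence of the conical hull description in Proposition~\ref{raysbfg}, which is precisely the argument you spell out. Your added care about central symmetry guaranteeing that $-F$ and $-G$ are facets is a reasonable point to make explicit, but there is no substantive difference from the paper's (implicit) proof.
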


We record the possible dimensions of bisection cones in the following proposition.
    
       \begin{prop}\label{dim_bfg}
    Let $F,G$ be facets of a polytope $P$. Then
    \[
    \dim \mathcal{B}_{F,G} \ = \ \begin{cases} d & \text{if} \, \, F\neq G, \\d -1 & \text{if} \, \, F= G.
        \end{cases}
        \]

    \end{prop}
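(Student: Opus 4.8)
The plan is to handle the two cases of the statement separately, using a different one of the two descriptions of $\mathcal{B}_{F,G}$ established above in each.

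For $F=G$ there is essentially nothing to prove beyond quoting Proposition~\ref{projhom}: writing $F=\{x\in P\colon z^Tx=b\}$, that proposition identifies $\mathcal{B}_{F,F}$ with the linear hyperplane $\{x\in\mathbb{R}^d\colon z^Tx=0\}$, which has dimension $d-1$.

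For $F\neq G$, I would start from the conical hull description of Proposition~\ref{raysbfg}, namely $\mathcal{B}_{F,G}=\cone(V)$ with $V=\{v-u\colon v\in\vertex(F),\,u\in\vertex(G)\}$. Since $\cone(V)$ contains the origin, its affine hull coincides with its linear span $\operatorname{span}(V)$, so $\dim\mathcal{B}_{F,G}=\dim\operatorname{span}(V)$, and it suffices to show that $V$ is not contained in any linear hyperplane of $\mathbb{R}^d$. Suppose, for contradiction, that $w^T(v-u)=0$ for some $w\neq 0$ and all $v\in\vertex(F)$, $u\in\vertex(G)$. Fixing one vertex of $G$ (resp.\ of $F$) shows that $w^Tx$ is constant, say equal to $\gamma$, on $\vertex(F)$ and also on $\vertex(G)$. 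Hence both vertex sets lie in the hyperplane $\{x\colon w^Tx=\gamma\}$; as the vertices of a facet affinely span its affine hull, and that affine hull is itself a hyperplane, we get $\aff(F)=\{x\colon w^Tx=\gamma\}=\aff(G)$. But a facet is recovered from its affine hull by $F=P\cap\aff(F)$, so this forces $F=P\cap\aff(F)=P\cap\aff(G)=G$, contradicting $F\neq G$. Therefore $V$ spans $\mathbb{R}^d$ and $\dim\mathcal{B}_{F,G}=d$.

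I do not expect a real obstacle here: the only slightly delicate point is the translation ``$\mathcal{B}_{F,G}$ full-dimensional $\iff$ $V$ spans $\mathbb{R}^d$'', which is immediate once one notes that $\cone(V)$ contains $0$; the genuine content is the elementary fact that distinct facets of $P$ have distinct affine hulls, which produces the contradiction. (Alternatively, in the case $F\neq G$ one could split further according to whether $\aff(F)$ and $\aff(G)$ are parallel: if not, the two facet directions already span $\mathbb{R}^d$ and so do the vertex differences; if so, then $\cone(V)$ is a pyramid with apex $0$ over the $(d-1)$-dimensional polytope $\conv(V)$, whose affine hull misses the origin, hence is full-dimensional. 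The uniform argument above avoids this case split.)
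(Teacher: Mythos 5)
Your proposal is correct and follows essentially the same route as the paper: the case $F=G$ is read off from Proposition~\ref{projhom}, and the case $F\neq G$ rests on the conical hull description of Proposition~\ref{raysbfg} together with the fact that distinct facets of $P$ have distinct affine hulls. The only difference is stylistic: the paper argues directly by exhibiting $d$ linearly independent generators (choosing $d$ affinely independent vertices of $F$ and a vertex of $G$ outside $\aff(F)$), whereas you argue contrapositively that the set of differences cannot lie in a linear hyperplane; both are valid and of the same length.
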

    
    \begin{proof}
      If $F=G$, then $\mathcal{B}_{F, G}$ is a hyperplane by Proposition \ref{projhom}. Otherwise, if $F \neq G$, we can find vertices $v_{1}, \ldots , v_{d}$ of $F$ and a vertex $v_{d+1}$ of $G$ such that $\{v_{i}\}_{i=1}^{d+1}$ is affinely independent. In other words, the set $\{v_{i} - v_{d+1}: i=1, \ldots , d\}$ is linearly independent and contained in $\mathcal{B}_{F, G}$ by Proposition \ref{raysbfg}. This implies that the dimension of $\mathcal{B}_{F, G}$ is $d$. 
      \end{proof}

The following decomposition of the half-space $H^{\geq}$ into bisection cones hints towards geometric restrictions inherent in the equivalences classes of~\eqref{def:equivalence}. 
      
      \begin{prop}\label{h_decomp}
Let $F = \{x \in P: z^{T}x = b\}$ be a facet of $P$, with $z^{T}x \leq b$ a valid inequality for~$P.$ Then \[
H^{\geq} \ := \ \{ x \in \mathbb{R}^{d}: z^{T}x \geq 0\} \ = \ \bigcup_{G \neq F}\mathcal{B}_{F, G} \, . 
\]
\end{prop}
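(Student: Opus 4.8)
The plan is to establish the two inclusions separately, using the conical hull description of Proposition~\ref{raysbfg} for the easy direction and an intermediate value argument inside the face cone $C_F$, together with a closure argument, for the other. For $\bigcup_{G\neq F}\mathcal B_{F,G}\subseteq H^{\geq}$ I would argue as follows: by Proposition~\ref{raysbfg} every ray generator of $\mathcal B_{F,G}$ has the form $v-u$ with $v\in\vertex(F)$ and $u\in\vertex(G)\subseteq P$, and since $z^Tx\leq b$ is valid on $P$ we get $z^T(v-u)=b-z^Tu\geq 0$. As $H^{\geq}$ is a convex cone containing all of these generators it contains each $\mathcal B_{F,G}$, hence their union. (Here $b>0$ because the origin lies in the interior of $P$.)

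For the reverse inclusion, note first that $\bigcup_{G\neq F}\mathcal B_{F,G}$ is a finite union of closed polyhedral cones, hence closed, so it suffices to prove the strict statement $\{x\in\R^d:z^Tx>0\}\subseteq\bigcup_{G\neq F}\mathcal B_{F,G}$ and then pass to closures, using $\overline{\{z^Tx>0\}}=H^{\geq}$. So fix $x$ with $z^Tx>0$. The restriction of $\dist^P(0,\cdot)$ to $C_F$ is linear (by definition of the face fan) and equals $1$ on $F$ (as $F\subseteq\partial P$), hence it agrees on $C_F$ with the linear functional $w\mapsto z^Tw/b$, since $F$ affinely spans $\{z^Tw=b\}$. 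Consider the continuous function $h\colon C_F\to\R$, $h(y)=z^Ty/b-\dist^P(0,y-x)$. Then $h(0)=-\dist^P(0,-x)=-\dist^P(0,x)<0$ by central symmetry of $P$ and $x\neq 0$. On the other hand, since $C_F$ is full-dimensional there is $\bar y\in C_F$ with $\bar y-x\in C_F$ as well (e.g.\ any $\bar y=tv$ with $v\in\relint F$ and $t$ large), and as $\bar y,\bar y-x\in C_F$ we obtain $h(\bar y)=z^T\bar y/b-z^T(\bar y-x)/b=z^Tx/b>0$. Applying the intermediate value theorem along the segment $[0,\bar y]\subseteq C_F$ yields $y^*\in C_F$ with $h(y^*)=0$, i.e.\ $\dist^P(0,y^*)=\dist^P(x,y^*)$, so $\bis^P(0,x)\cap C_F\neq\emptyset$.

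It then remains to produce a facet $G\neq F$ with $\bis^P_{F,G}(0,x)\neq\emptyset$. The key observation is that for \emph{every} $y\in\bis^P(0,x)\cap C_F$ one has $y-x\notin\interior C_F$: otherwise $y-x\in C_F$ gives $\dist^P(0,y-x)=z^T(y-x)/b$, and hence $0=h(y)=z^Tx/b>0$, a contradiction. Since the maximal cones $C_G$ of the face fan $\mathcal F_P$ (indexed by the facets $G$ of $P$) cover $\R^d$ with pairwise disjoint interiors, $\R^d\setminus\interior C_F=\bigcup_{G\neq F}C_G$, so $y-x\in C_G$ for some facet $G\neq F$. Then $y\in\bis^P(0,x)\cap C_F\cap(x+C_G)=\bis^P_{F,G}(0,x)$, so this cell is nonempty and $x\in\mathcal B_{F,G}$ with $G\neq F$. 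This proves the strict inclusion, and the closure argument above then upgrades it to $H^{\geq}\subseteq\bigcup_{G\neq F}\mathcal B_{F,G}$.

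The step I expect to be the main obstacle is precisely the last one: ensuring that the facet $G$ can be chosen \emph{different} from $F$. A naive argument only yields $x\in\mathcal B_{F,G}$ for \emph{some} facet $G$, and on the bounding hyperplane $\{z^Tx=0\}=\mathcal B_{F,F}$ (Proposition~\ref{projhom}) this could be forced to equal $F$. The two devices that circumvent this are the short $h$-based observation that on the open halfspace the translate $y-x$ can never lie in the interior of $C_F$, and the closure argument which recovers the bounding hyperplane from the open halfspace rather than arguing on it directly.
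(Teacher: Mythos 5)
Your proof is correct, and the reverse inclusion is argued by a genuinely different route than the paper's. The paper's proof is purely polyhedral: it picks $y\in\relint(-F)$ and cones from $y$ over each facet $-G\neq -F$, obtaining the pulling subdivision $H^{\geq}+y=\bigcup_{G\neq F}K_{-G}$, and then shows $K_{-G}-y\subseteq\mathcal{B}_{F,G}$ by rewriting the generators $u_j-y$ as convex combinations of the differences $u_j-v_i$ from Proposition~\ref{raysbfg}. Your proof instead is metric--topological: you use the intermediate value theorem on $h(y)=\dist^P(0,y)-\dist^P(x,y)$ along a segment in $C_F$ to produce a bisector point in $C_F$ directly, then invoke completeness of the face fan to find the facet $G$ containing $y-x$, with the observation that $y-x\notin C_F$ when $z^Tx>0$ guaranteeing $G\neq F$; the bounding hyperplane is recovered by closedness of the finite union of polyhedral cones. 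Both arguments are sound. The paper's version yields slightly more information -- an explicit decomposition of $H^{\geq}$ into pieces each contained in a single $\mathcal{B}_{F,G}$ (this is what Figure~\ref{fig:hexagon_subdiv} depicts, and Lemma~\ref{lem:2dim_biscone_decomp} later exploits the same idea) -- whereas yours works directly from the definition of $\mathcal{B}_{F,G}$ as a non-emptiness locus and makes transparent \emph{why} a cell of the bisector must appear in $C_F$. Your diagnosis of the delicate point (forcing $G\neq F$, and handling the hyperplane $\mathcal{B}_{F,F}$ by a limiting argument rather than directly) is accurate and correctly resolved.
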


\begin{proof}
To show that $\mathcal{B}_{F, G} \subseteq H^{\geq}$ for every facet $G\neq F$, by Proposition ~\ref{raysbfg} it suffices to show that $v - u \in H^{\geq}$ for every $v \in \vertex(F), u \in \vertex(G).$ This holds since for every $v \in \vertex (F) \subset F$ and any $u\in P$ we have $z^{T}u \leq b=z^{T}v$.

To see the opposite inclusion, we consider a point $y \in \relint(-F)$ and for each facet $G \neq F$ the cone over $-G$ with apex $y$,  $K_{-G} \colonequals \{y + \lambda(x-y): x \in -G, \lambda \geq 0\}$. By coning from $y$ over every facet $-G\neq -F$ we obtain the subdivision
\begin{equation}\label{eq:bis_cone_subdiv}
H^{\geq}+y = \bigcup_{G \neq F}K_{-G} \, .
\end{equation}

See Figure \ref{fig:hexagon_subdiv} for an illustration of this subdivision which is induced by the pulling refinement of $y$ in the trivial subdivision of $P$, see~\cite[Section 4.3.4]{de2010triangulations}.

\begin{figure}[h]
    \centering
    \begin{tikzpicture}
    \node [
    regular polygon, 
    regular polygon sides=6, 
    minimum size=4 cm, shape border rotate = 30, scale = 1,
    draw=black, thick
] 
at (0,0) (A){};
\node[dot=4pt, fill=black, label=right:{}] at (A.center) {};
\foreach \i in {1,2, 3}
    \node[circle, label=left:$v_{\i}$] at (A.corner \i) {};
\foreach \i in {4}
    \node[circle, label=below:$v_{\i}$] at (A.corner \i) {};
\foreach \i in {5}
    \node[circle, label=below:$v_{\i}$] at (A.corner \i) {};
\foreach \i in {6}
    \node[circle, label=right:$v_{\i}$] at (A.corner \i) {};
\node[dot=4pt, fill=black, label=below:{$y$}] at ($0.67*(A.corner 4) + 0.33*(A.corner 3)$) (Y) {};
\foreach \i in {1,2,5,6}
    \draw [->] (Y) -- ($1.5*(A.corner \i)-0.5*(Y)$);
\foreach \i in {3}
    \draw [->] (Y) -- ($2.5*(A.corner \i)-1.5*(Y)$);
\foreach \i in {4}
    \draw [->] (Y) -- ($5*(A.corner \i)-4*(Y)$);
\draw [shorten >= -4cm, shorten <=-4cm] (A.corner 2) [dotted, thick] edge (A.corner 5) node[right, label={[xshift= 7.3 cm, yshift = -4.5 cm]{}}, label={[xshift= 6.3 cm, yshift = -3.5 cm]{$H^{\geq}$}}]  {};
\node[dot=0pt, fill=black, label=above:{$F$},label={[xshift=-2cm, yshift=1.5cm]{$K_{-G}$}}, label={[xshift=-2.3cm, yshift=0cm]{$-G$}}] at ($0.5*(A.corner 1) + 0.5*(A.corner 6)$) (Z) {};
\node[dot=0pt, fill=black, label={[xshift=-0.3cm, yshift=-0.45cm]{$-F$}}] at ($0.5*(A.corner 3) + 0.5*(A.corner 4)$) (X) {};
\draw[->] ($2*(A.corner 5)$)  -- ($2*(A.corner 5) + (60:1cm)$);
\path[fill=blue,opacity=0.1] (Y) -- ($1.5*(A.corner 2)-0.5*(Y)$) -- ($1.5*(A.corner 1)-0.5*(Y)$) -- cycle;

\end{tikzpicture}
    \caption{A subdivision of $H^{\geq}$ into the cones $y+K_{-G}$. Translating $y$ to the origin, this subdivision is the decomposition in Equation \eqref{eq:bis_cone_subdiv}.}
    \label{fig:hexagon_subdiv}
\end{figure}
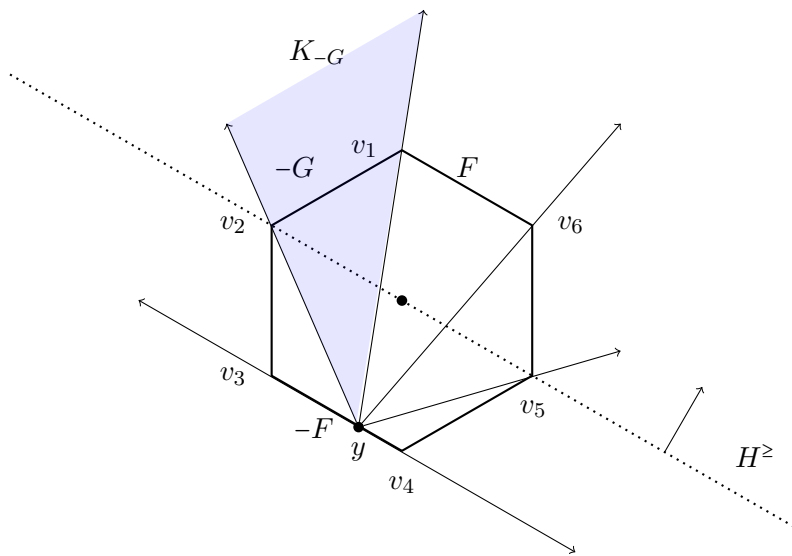

In particular, for any $x \in H^{\geq}$ there exists a facet $G$ such that $x$ is contained in $K_{-G} -y$. This cone has apex $0$ and is generated by $u_{j}-y, j=1,\ldots,r$ where each $u_{j}$ is a vertex of $-G$. That is $x=\sum _{j=1}^r\lambda _j (u_j-y)$ for some $\lambda _1,\ldots, \lambda _r\geq 0$. Further, since $y$ lies in $-F$, we can express $y$ as $y = \sum_{i=1}^{n}\mu_{i}v_{i}$ where $0\leq \mu_{i} \leq 1$ for $i=1, \ldots, n$ with $\sum_{i=1}^{n}\mu_{i} = 1$ and $v_{i}$ are the vertices of $-F$. Thus
\[
x \ = \ \sum _{j=1}^r \lambda _j ( u_{j}-y) \ = \ \sum_{j=1}^r \lambda _j \left(\sum_{i=1}^n \mu _i (u_{j}-v_i)\right)\ = \ \sum_{j=1}^r \sum_{i=1}^n \lambda _j \mu _i (u_{j}-v_i)
\]
which is contained in the bisection cone $\mathcal{B}_{F,G}$ by Proposition~\ref{raysbfg}.
\end{proof}

Let $\mathcal{H}_{P}$ be the hyperplane arrangement consisting of all hyperplanes passing through the origin that are parallel to a facet of $P$. By abuse of notation we use the same notation also for the fan induced by the hyperplane arrangement. As a consequence of Propositions~\ref{projhom} and~\ref{h_decomp} we have the following.

\begin{prop}\label{lem:bis_refine}
    Let $a,b$ be points in general position. If $a$ and $b$ are separated by a hyperplane in $\mathcal{H}_P$ or lie in different maximal cells of $\mathcal{F}_P$ then $\bis (0,a)$ and $\bis(0,b)$ are not equivalent.
\end{prop}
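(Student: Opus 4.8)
The plan is to prove the contrapositive: if $\bis(0,a)$ and $\bis(0,b)$ are equivalent, then $a$ and $b$ lie on the same side of (or on) every hyperplane in $\mathcal{H}_P$, and they lie in the same maximal cell of the face fan $\mathcal{F}_P$. Equivalently, I want to show that each maximal cell of $\mathcal{H}_P$ and each maximal cell of $\mathcal{F}_P$ is a union of equivalence classes of $\eqref{def:equivalence}$, i.e.\ that crossing either of these fans necessarily changes the emptiness/non-emptiness pattern of some cell $\bis_{F,G}(0,\cdot)$.

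For the face fan part, I would use Proposition~\ref{projhom}: for a facet $F=\{x\in P\colon z^Tx=b\}$ we have $\mathcal{B}_{F,-F}=C_F$. So the bisection cone associated to the pair $(F,-F)$ is exactly the maximal cell $C_F$ of $\mathcal{F}_P$. Now suppose $a$ and $b$ lie in different maximal cells of $\mathcal{F}_P$; since the maximal cells of $\mathcal{F}_P$ are precisely the face cones $C_F$ over facets $F$, and since a point in general position lies in the interior of a unique such cone, there is a facet $F$ with $a\in C_F$ but $b\notin C_F$ (after possibly swapping $a$ and $b$). Then $a\in\mathcal{B}_{F,-F}$ while $b\notin\mathcal{B}_{F,-F}$, so by definition of the bisection cone $\bis_{F,-F}(0,a)\neq\emptyset$ but $\bis_{F,-F}(0,b)=\emptyset$; hence $\bis(0,a)$ and $\bis(0,b)$ are not equivalent.

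For the hyperplane arrangement part, I would similarly use that $\mathcal{B}_{F,F}=\{x\colon z^Tx=0\}$ together with Proposition~\ref{h_decomp}, which says $H^{\geq}=\{z^Tx\ge0\}=\bigcup_{G\neq F}\mathcal{B}_{F,G}$. Suppose $a$ and $b$ are separated by the hyperplane $H_F=\{z^Tx=0\}$ parallel to the facet $F$; say $z^Ta>0$ and $z^Tb<0$ (strict, since the points are in general position and $H_F\in\mathcal{H}_P$). By Proposition~\ref{h_decomp} applied to $F$ there is a facet $G\neq F$ with $a\in\mathcal{B}_{F,G}$. I then want to argue that $b\notin\mathcal{B}_{F,G}$. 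For this, note that applying Proposition~\ref{h_decomp} to the facet $-F$ (whose defining functional is $-z$) gives $\{x\colon z^Tx\le0\}=\bigcup_{G'\neq -F}\mathcal{B}_{-F,G'}$, and by Corollary~\ref{bfg_swap} each $\mathcal{B}_{F,G}\subseteq H^\geq=\{z^Tx\ge0\}$. Since $z^Tb<0$, $b$ cannot lie in any $\mathcal{B}_{F,G}$ with $G\neq F$, and since $z^Tb\neq0$ it cannot lie in $\mathcal{B}_{F,F}$ either; so $b\notin\bigcup_G\mathcal{B}_{F,G}$, in particular $b\notin\mathcal{B}_{F,G}$ for the $G$ found above, while $a\in\mathcal{B}_{F,G}$. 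Hence $\bis_{F,G}(0,a)\neq\emptyset$ but $\bis_{F,G}(0,b)=\emptyset$, and the two bisectors are not equivalent.

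The only mildly delicate point — the main obstacle, such as it is — is ensuring in the hyperplane case that the $G$ witnessing $a\in\mathcal{B}_{F,G}$ can be taken with $G\neq F$ and $G\neq -F$ as needed; but Proposition~\ref{h_decomp} is stated precisely as a union over $G\neq F$, so this is automatic, and the containment $\mathcal{B}_{F,G}\subseteq H^\geq$ from the proof of that proposition handles $b$. I would also remark that the two arguments together show more than claimed: the partition of points in general position by $\eqref{def:equivalence}$ refines the common refinement of $\mathcal{F}_P$ and $\mathcal{H}_P$, which is exactly the content needed for Proposition~\ref{bis_refine} in the introduction. Finally I would note that general position is used only to guarantee strict inequalities / interior membership, so that "separated by" and "different maximal cells" have their naive meaning.
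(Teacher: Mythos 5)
Your proposal is correct and follows essentially the same route as the paper's proof: the face-fan case via $\mathcal{B}_{F,-F}=C_F$ from Proposition~\ref{projhom}, and the hyperplane case via the decomposition $H^{\geq}=\bigcup_{G\neq F}\mathcal{B}_{F,G}$ of Proposition~\ref{h_decomp}. The only difference is that you spell out the containment $\mathcal{B}_{F,G}\subseteq H^{\geq}$ (which already follows from the equality in Proposition~\ref{h_decomp}) in more detail than the paper does; the extra detour through $-F$ and Corollary~\ref{bfg_swap} is unnecessary but harmless.
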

\begin{proof}
Let $H\in \mathcal{H}_P$ be a hyperplane parallel to a facet $F$ of $P.$ If $a$ and $b$ are separated by $H$ then, by Proposition~\ref{h_decomp}, $a\in \mathcal{B}_{F,G}$ for some $G\neq F$ but $b\not \in \mathcal{B}_{F,G}$, or the other way round, depending on the orientation of $H$. Thus, in both cases, $\bis (0,a)$ and $\bis(0,b)$ are not equivalent. 

Furthermore, by Proposition~\ref{projhom}, for any facet $F$, $\mathcal{B}_{F,-F}=C_F$. In particular, if $a\in C_F$ but $b \notin C_F$ then again $\bis (0,a)$ and $\bis(0,b)$ are not equivalent. 
\end{proof}
The preceding proposition shows that the equivalence relation~\eqref{def:equivalence} on the bisectors for points in general position is finer than the partition of points in the complement of the common refinement of $\mathcal{H}_P$ and $\mathcal{F}_P$. As an immediate consequence we obtain that if the bisection fan of $P$ exists then it refines both $\mathcal{H}_P$ and $\mathcal{F}_P$ as a polyhedral fan.

    \begin{prop} \label{bis_refine}
     Let $P$ be a centrally symmetric polytope containing the origin in its interior and let $\Delta$ be the bisection fan corresponding to $\dist^{P}.$ Then both $\mathcal{H}_{P}$ and $\mathcal{F}_{P}$ are refined by $\Delta$. 
    \end{prop}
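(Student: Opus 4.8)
The plan is to deduce Proposition~\ref{bis_refine} directly from Proposition~\ref{lem:bis_refine} together with the definition of the bisection fan (Definition~\ref{bisfandef}). The key observation is that ``$\Delta$ refines a fan $\Sigma$'' means every cone of $\Delta$ is contained in some cone of $\Sigma$, or equivalently, that the interior of every maximal cone of $\Delta$ meets the interior of exactly one maximal cone of $\Sigma$ and is contained in it. So I want to show: if $a$ and $b$ lie in the interior of the same maximal cone of $\Delta$, then they lie in the same (closed) maximal cone of $\mathcal{H}_P$ and in the same maximal cone of $\mathcal{F}_P$.

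First I would recall that, by definition of $\Delta$ being the bisection fan, two points $a,b$ in general position lie in the interior of the same maximal cone of $\Delta$ if and only if $\bis^P(0,a)$ and $\bis^P(0,b)$ are equivalent. Now take any maximal cone $\sigma$ of $\Delta$ and two points $a,b$ in its relative interior that are in general position (such points are dense in $\sigma$ since the non-general-position locus lies on the finitely many hyperplanes of $\mathcal{H}_P$, which have measure zero). By Proposition~\ref{lem:bis_refine}, since $\bis(0,a)\sim\bis(0,b)$, the points $a$ and $b$ are not separated by any hyperplane of $\mathcal{H}_P$ and they lie in the same maximal cell of $\mathcal{F}_P$. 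Since this holds for a dense set of pairs in $\interior(\sigma)$, and ``not separated by $H$'' is a closed condition while the union of closed maximal cells of each fan covers $\mathbb{R}^d$, the whole cone $\sigma$ lies on one side of every hyperplane in $\mathcal{H}_P$, hence is contained in a single maximal cone of $\mathcal{H}_P$; similarly $\sigma$ is contained in a single maximal cone of $\mathcal{F}_P$ (using that $\mathcal{F}_P$ is a fan, so if $\interior(\sigma)$ meets the interior of a maximal cone $\tau$ of $\mathcal{F}_P$ it is forced into $\tau$). This shows every maximal cone of $\Delta$ refines into $\mathcal{H}_P$ and $\mathcal{F}_P$; since every cone of $\Delta$ is a face of a maximal one, the refinement statement follows for all of $\Delta$.

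The one technical point deserving care — and the main (minor) obstacle — is the passage from ``$a,b$ in general position in $\interior(\sigma)$ are not separated by $H$'' to ``all of $\sigma$ lies on one side of $H$''. I would handle this by a connectedness argument: $\interior(\sigma)$ is connected and its intersection with the general-position locus is connected (the complement of finitely many hyperplanes in a convex open set is connected only after removing a single hyperplane, so more carefully: the general-position points in $\interior(\sigma)$ all lie strictly on one side of each $H\in\mathcal{H}_P$, because if two of them were on opposite sides the segment between them would cross $H\cap\interior(\sigma)$, and a small perturbation of a crossing point gives general-position points on both sides that are nonetheless in $\interior(\sigma)$ — contradicting that they must be non-separated). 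Hence $\interior(\sigma)$ avoids every $H\in\mathcal{H}_P$ entirely, so $\sigma$ is contained in a closed region of $\mathcal{H}_P$, i.e.\ a union of closed maximal cones; being a cone contained in the closure of one connected region, it sits in a single maximal cone of $\mathcal{H}_P$. The argument for $\mathcal{F}_P$ is analogous, using additionally Proposition~\ref{prop:weakgeneral} (general position implies weak general position) to ensure the relevant facet-cone boundaries are not crossed. I expect this to be short; the substance of the result is entirely carried by Proposition~\ref{lem:bis_refine}.
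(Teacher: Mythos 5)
Your proof is correct and takes the same route as the paper, whose proof of this proposition is literally the one line ``follows directly from Proposition~\ref{lem:bis_refine}''; you have simply made explicit the density-and-connectedness bookkeeping that the paper leaves implicit. One small correction: the points in general position are indeed dense in $\interior(\sigma)$, but not because the non-general-position locus lies on the hyperplanes of $\mathcal{H}_P$ (general position is strictly stronger than weak general position); the correct reason is that this locus is contained in the union of the boundaries of the finitely many bisection cones $\mathcal{B}_{F,G}$, which is closed and nowhere dense.
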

    \begin{proof}
        Follows directly from Proposition~\ref{lem:bis_refine}. \qedhere
    \end{proof}
    In general, the bisection fan properly refines the common refinement of $\mathcal{H}_{P}$ and $\mathcal{F}_{P}$. This is, for example, the case for sufficiently generic polygons as well as the discrete Wasserstein norm, see Sections~\ref{sec:polygons} and~\ref{sec:Wasserstein} below. However, there are also polytopes, such as the cross-polytope and the cube (see Sections~\ref{sec:crosspolytope} and~\ref{sec:cube}), for which the bisection fan is precisely given by the the common refinement of $\mathcal{H}_{P}$ and $\mathcal{F}_{P}$. It would be interesting to give criteria for when this phenomenon arises.

    \begin{quest}
        What are necessary and sufficient criteria for $P$ for its bisection fan to be equal to the common refinement of $\mathcal{H}_{P}$ and $\mathcal{F}_{P}$?
    \end{quest}

We conclude this section by noting that bisection cones and fans
are well-behaved under linear isomorphisms. Later on in Section~\ref{sec:polygons} we will see that
the same is not true for combinatorial equivalence: an example of combinatorially equivalent polygons for which the bisection fans are combinatorially different is discussed in Example~\ref{ex:noncombiproperty}. In conclusion, the notions of bisection cones and bisection fans are metric properties of the polytope rather than combinatorial ones.
    \begin{prop}\label{prop:lineariso}
        For any polytope $P$ and any linear isomorphism $\pi \colon \mathbb{R}^d\rightarrow \mathbb{R}^d$ we have that $\mathcal{B}_{\pi(F),\pi (G)}=\pi(\mathcal{B}_{F,G})$. In particular, if $\Delta$ is the bisection fan of $P$ then $\pi (\Delta) = \{\pi (C)\colon C\in \Delta\}$ is the bisection fan of $\pi (P)$.
    \end{prop}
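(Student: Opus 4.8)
The natural approach is to leverage the conical-hull description of bisection cones from Proposition \ref{raysbfg}, which reduces the first claim to an elementary statement about how vertices and cones transform under a linear map. The key observation is that a linear isomorphism $\pi$ sends the face lattice of $P$ to that of $\pi(P)$: if $F$ is a facet of $P$, then $\pi(F)$ is a facet of $\pi(P)$, and $\vertex(\pi(F)) = \pi(\vertex(F))$. Granting this, I would compute
\[
\mathcal{B}_{\pi(F),\pi(G)} \ = \ \cone\bigl(\{v' - u' : v'\in \vertex(\pi(F)),\, u'\in \vertex(\pi(G))\}\bigr) \ = \ \cone\bigl(\{\pi(v) - \pi(u) : v\in \vertex(F),\, u\in \vertex(G)\}\bigr),
\]
and then use linearity of $\pi$ together with the fact that $\pi$ commutes with taking conical hulls (since $\pi$ maps nonnegative combinations to nonnegative combinations and, being invertible, does so bijectively) to conclude that this equals $\pi\bigl(\cone(\{v-u\})\bigr) = \pi(\mathcal{B}_{F,G})$.

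For the second claim, I would argue directly from the defining property of the bisection fan in Definition \ref{bisfandef}. The first point is that $\pi$ induces a bijection on polyhedral distances and bisectors: one checks $\dist^{\pi(P)}(\pi(a),\pi(b)) = \dist^{P}(a,b)$ from the definition $\dist^P(a,b)=\min\{\alpha\ge 0: b-a\in\alpha P\}$, since $b-a\in\alpha P \iff \pi(b)-\pi(a)\in\alpha\pi(P)$. Consequently $\pi(\bis^P_{F,G}(0,x)) = \bis^{\pi(P)}_{\pi(F),\pi(G)}(0,\pi(x))$, so $\bis^{\pi(P)}(0,\pi(a))$ and $\bis^{\pi(P)}(0,\pi(b))$ are equivalent if and only if $\bis^P(0,a)$ and $\bis^P(0,b)$ are equivalent, and $a$ is in general position for $P$ if and only if $\pi(a)$ is in general position for $\pi(P)$. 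Since $\pi(\Delta)$ is visibly a polyhedral fan (linear images of cones glued along linear images of common faces), and since $a,b$ lie in the interior of a common maximal cone $C$ of $\Delta$ if and only if $\pi(a),\pi(b)$ lie in the interior of the maximal cone $\pi(C)$ of $\pi(\Delta)$ (interiors are preserved by linear isomorphisms), the two conditions of Definition \ref{bisfandef} for $\pi(P)$ with respect to $\pi(\Delta)$ match those for $P$ with respect to $\Delta$. Hence $\pi(\Delta)$ is the bisection fan of $\pi(P)$.

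I do not anticipate a serious obstacle here; the statement is essentially a functoriality check. The only point requiring a little care is confirming that $\pi(\Delta)$ is genuinely a fan — i.e., that the collection $\{\pi(C): C\in\Delta\}$ is closed under taking faces and that intersections of members are faces of each — but this is immediate since $\pi$ is a linear isomorphism and thus preserves the face relation among cones and commutes with intersection. A secondary minor point is the translation-invariance reduction already recorded in the excerpt (the equivalence class of $\bis^P(a,b)$ depends only on $b-a$), which lets us work with $\bis(0,x)$ throughout and makes the identification $\mathcal{B}_{\pi(F),\pi(G)}=\pi(\mathcal{B}_{F,G})$ the whole content of the first assertion.
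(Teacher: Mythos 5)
Your proof is correct. For the first claim you take exactly the paper's route: Proposition~\ref{raysbfg} reduces everything to the fact that $\pi$ maps $\vertex(F)$ to $\vertex(\pi(F))$ and commutes with conical hulls. For the ``in particular'' clause the two arguments diverge slightly in justification, though not in substance: the paper simply invokes the description~\eqref{e_s} of the equivalence classes as intersections $\bigcap_{(F,G)\in S}\mathcal{B}_{F,G}\cap\bigcap_{(F,G)\in S^c}\mathcal{B}_{F,G}^c$, so that once bisection cones (and, since $\pi$ is invertible, their complements) transport under $\pi$, the equivalence classes transport verbatim and Definition~\ref{bisfandef} follows at once. You instead re-derive the preservation of equivalence and of general position from first principles, via the isometry $\dist^{\pi(P)}(\pi(a),\pi(b))=\dist^{P}(a,b)$ and the resulting identity $\pi(\bis^P_{F,G}(0,x))=\bis^{\pi(P)}_{\pi(F),\pi(G)}(0,\pi(x))$, and you additionally verify that $\pi(\Delta)$ is a fan. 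Your route is more self-contained (it does not rely on the $E_S$ description and would survive even without the first claim), at the cost of being longer; the paper's is shorter because it leans on machinery already set up in Section~\ref{sec:bisectioncones}. Both are complete; no gaps.
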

    \begin{proof}
        From the description of bisection cones given in Proposition~\ref{raysbfg} it follows that for all facets $F,G$ of $P$ and linear $\pi$ we have $\pi (\mathcal{B}_{F,G})=\mathcal{B}_{\pi (F),\pi (G)}$. Furthermore, if $\pi$ is an isomorphism we also have 
        \[
        \pi (\mathbb{R}^{d}\setminus\mathcal{B}_{F,G})=\mathbb{R}^d\setminus \pi (\mathcal{B}_{F,G})=\mathbb{R}^d\setminus \mathcal{B}_{\pi (F),\pi (G)}
        \]
        In particular, it follows from Equation \eqref{e_s} that the equivalence classes of the bisectors with respect to $\pi (P)$ are precisely given by the images under $\pi$ of the equivalence classes of $P$. This proves the claim.
    \end{proof}

\section{Polygons}\label{sec:polygons}
In this section, we give an explicit description of the bisection cones and the bisection fan of centrally symmetric polygons. The set-up is as follows: let $P = \conv\{v_{1}, v_{2}, \ldots , v_{2n}\} \subset \mathbb{R}^{2}$ be a centrally symmetric $2n$-gon with vertices labelled counter-clockwise. Let $F_{i} = \conv\{v_{i}, v_{i+1}\}$ for $i=1, \ldots ,2n$ be the facets of $P$; here and throughout this section addition of indices is taken modulo $2n$.  We write $\mathcal{B}_{i, j}$ for the bisection cone $\mathcal{B}_{F_{i}, F_{j}}.$ By Proposition~\ref{dim_bfg}, $\mathcal{B}_{i, j}$ is $2$-dimensional if $i \neq j$ and otherwise $1$-dimensional. By Proposition \ref{raysbfg}, $\mathcal{B}_{i, j}$ is generated by vectors of the form $v - u$, where $v \in \{v_{i}, v_{i+1}\}$ and $u \in \{v_{j}, v_{j+1}\}.$ The following proposition provides the ray description of $\mathcal{B}_{i, j}$.

\begin{prop}\label{refined_2_dim}
Let $i, j \in [2n].$ Then
\[
\mathcal{B}_{i, j} = \begin{cases}
\mathbb{R}\cdot(v_{i+1}-v_{i}) \quad &\text{if $j =i$},\\
\cone(\{v_{i}-v_{j}, v_{i+1}-v_{j+1}\}) \quad &\text{if $i\neq j$}. \\
\end{cases}
\] 
\end{prop}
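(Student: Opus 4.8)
The plan is to start from the conical hull description in Proposition~\ref{raysbfg}, which tells us that $\mathcal{B}_{i,j} = \cone(\{v-u : v \in \vertex(F_i), u \in \vertex(F_j)\})$, so the generating set is the four vectors $v_i - v_j$, $v_i - v_{j+1}$, $v_{i+1} - v_j$, $v_{i+1} - v_{j+1}$. The case $j = i$ is immediate: all four differences collapse to $\pm(v_{i+1} - v_i)$, giving the line $\mathbb{R}\cdot(v_{i+1}-v_i)$; alternatively this is just the $F = G$ case of Proposition~\ref{projhom}. So the content is entirely in the case $i \neq j$, where I must show that the cone generated by the four vectors equals the cone generated by just the two ``parallel'' differences $v_i - v_j$ and $v_{i+1} - v_{j+1}$.

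The key step is to show that the two ``crossed'' differences $v_i - v_{j+1}$ and $v_{i+1} - v_j$ lie in $\cone(\{v_i - v_j, v_{i+1}-v_{j+1}\})$, since the reverse containment ($\cone$ of two vectors sits inside $\cone$ of all four) is trivial. Here I would exploit the central symmetry and convexity of $P$ together with the counter-clockwise labelling. The idea is that $v_i - v_{j+1} + v_{i+1} - v_j = (v_i - v_j) + (v_{i+1} - v_{j+1})$, so the two crossed differences and the two parallel differences have the same sum; it then suffices to show that the segment $[v_i - v_{j+1},\, v_{i+1} - v_j]$ (which contains the common midpoint) lies inside $\cone(\{v_i - v_j, v_{i+1}-v_{j+1}\})$, equivalently that $v_i - v_{j+1}$ and $v_{i+1} - v_j$ are nonnegative combinations of $v_i - v_j$ and $v_{i+1} - v_{j+1}$. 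To see this I would pass to the level of edge vectors: write $e_k = v_{k+1} - v_k$ for the edge vectors of $P$, traversed counter-clockwise. Then $v_{i+1} - v_{j+1} = (v_i - v_j) + (e_i - e_j)$, and similarly $v_i - v_{j+1} = (v_i - v_j) - e_j$ and $v_{i+1} - v_j = (v_i - v_j) + e_i$. The claim reduces to a $2$-dimensional statement about the cyclically ordered edge directions $e_i, e_j$ and their relation to the vectors $v_i - v_j$; because the $e_k$ turn monotonically (by convexity) and the configuration is centrally symmetric, the vector $v_i - v_j$ lies ``between'' the directions $-e_j$ and $e_i$ in the appropriate angular sense, which is exactly what is needed.

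The geometrically cleanest way to package this, and what I would actually write, is to use the homogenization/Minkowski-sum picture of Proposition~\ref{projhom}: $\mathcal{B}_{i,j}$ is the slice at height $0$ of $\homog(F_i) + (-\homog(F_j)) = \homog(F_i) - \homog(F_j)$, projected to $\mathbb{R}^2$. The cone $\homog(F_i)$ is the $2$-dimensional cone in $\mathbb{R}^3$ spanned by $(v_i,1)$ and $(v_{i+1},1)$, and $-\homog(F_j)$ is spanned by $(-v_j,-1)$ and $(-v_{j+1},-1)$. Their Minkowski sum is a polyhedral cone, and I claim its slice $\{x_3 = 0\}$ is spanned precisely by $(v_i - v_j, 0)$ and $(v_{i+1} - v_{j+1}, 0)$: the two ``crossed'' sums $(v_i - v_{j+1}, 0)$ and $(v_{i+1} - v_j, 0)$ also lie at height $0$, but they are each convex combinations of the parallel ones because, reading the vertices counter-clockwise, the cyclic order forces $(v_i,1), (v_{i+1},1), (-v_{j+1},-1), (-v_j,-1)$ to be ``nested'' in a way making the crossed pairings redundant on the zero slice. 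I expect the main obstacle to be precisely this cyclic/convexity bookkeeping --- making rigorous, with a clean choice of signs and a careful use of central symmetry and the counter-clockwise labelling, the assertion that the crossed differences are dominated by the parallel ones rather than the other way round. A degenerate subtlety to handle separately is when $v_i - v_j$ and $v_{i+1} - v_{j+1}$ happen to be parallel (e.g.\ when $F_j = -F_i$), in which case $\mathcal{B}_{i,j}$ is a half-plane or lower-dimensional; but Proposition~\ref{dim_bfg} already guarantees the cone is $2$-dimensional for $i \neq j$, so the two parallel generators are never both needed to be distinct rays unless they genuinely span, and this edge case can be absorbed into the general argument or cross-checked directly against Proposition~\ref{projhom}.
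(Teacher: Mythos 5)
Your proposal is correct and follows essentially the same route as the paper: both start from the four generators supplied by Proposition~\ref{raysbfg}, dispose of the case $j=i$ immediately, and for $i\neq j$ argue that the two ``crossed'' differences are redundant because the four relevant vertices are in convex position read counter-clockwise (the paper phrases this as the extreme rays being the diagonals of the quadrilateral $v_i,v_{i+1},v_j,v_{j+1}$, which is the same convexity bookkeeping you describe via monotonically turning edge vectors). The degeneracy you flag never actually occurs, since the diagonals of a convex quadrilateral cannot be parallel, and the paper handles the adjacent case $j=i\pm1$ by a separate direct computation plus Corollary~\ref{bfg_swap}.
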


\begin{proof}

If $j=i$ then $v_{i}$ and $v_{i+1}$ are the only vertices of $F_i=F_j$ and thus $\mathcal{B}_{i, j}=\mathbb{R}\cdot(v_{i+1}-v_{i})$ by Proposition \ref{raysbfg}.

If $i\neq j$ we distinguish between the cases when $j \in \{i-1,i+1\}$ and $j\not \in \{i-1,i+1\}$. If $j=i+1$ then from Proposition \ref{raysbfg} it follows that $\mathcal{B}_{i, j}$ is the conical hull of $v_{i}-v_{i+1}, v_{i+1}-v_{i+2}, v_{i}-v_{i+2}, 0$ among which $0$ and $v_{i}-v_{i+2}$ are redundant since the latter is sum of the first and second vectors above. This shows the claim in this case. If $j= i-1$ the claim follows from the case when $j=i+1$ together with Corollary~\ref{bfg_swap}.

\begin{figure}[ht]
\begin{subfigure}[b]{0.45 \textwidth}
\centering

\begin{tikzpicture}
\node[draw=none,minimum size=4cm,regular polygon,regular polygon sides=10] (a) {};

\foreach \i in {1}
    \node[circle, label=above:$v_{\i}$] at ($(a.corner \i)+(1,0)$) {};
\foreach \i in {2}
    \node[circle, label=above:$v_{\i}$] at ($(a.corner \i)$) {};
\foreach \i in {6}
    \node[circle, label=below:$v_{\i}$] at ($(a.corner \i)+(-1,0)$) {};
\foreach \i in {7}
    \node[circle, label=below:$v_{\i}$] at (a.corner \i) {};
\foreach \i in {3,4}
    \node[circle, label=left:$v_{\i}$] at (a.corner \i) {};
\foreach \i in {5}
    \node[circle, label=left:$v_{\i}$] at ($(a.corner \i)+(-0.5,0)$) {};
\foreach \i in {8,9}
    \node[circle, label=right:$v_{\i}$] at (a.corner \i) {};
\foreach \i in {10}
    \node[circle, label=right:$v_{\i}$] at ($(a.corner \i)+(0.5, 0)$) {};

\draw ($(a.corner 1)+(1,0)$) -- (a.corner 2);
\draw [red] ($(a.corner 1)+(1,0)$) -- ($(a.corner 10)+(0.5,0)$) node[label={[xshift=0.1cm, yshift=0.3cm]$F_{10}$}]  {};
\draw [black] (a.corner 9)-- ($(a.corner 10)+(0.5,0)$) node[]  {};
\draw [red] (a.corner 4)-- ($(a.corner 5)+(-0.5,0)$) node[label={[xshift=-0.4cm, yshift=0.2cm]$F_{4}$}]  {};
\draw [black] ($(a.corner 6)+(-1,0)$)-- ($(a.corner 5)+(-0.5,0)$) node[]  {};
\draw [black] ($(a.corner 6)+(-1,0)$)-- (a.corner 7) node[]  {};

\foreach \i[evaluate=\i as \j using int(\i+1)] in {7,8}
    \draw (a.corner \i) -- (a.corner \j) {};
\foreach \i[evaluate=\i as \j using int(\i+1)] in {2,3}
    \draw (a.corner \i) -- (a.corner \j) {};

\path ($(a.corner 1)+(1,0)$)  edge (a.corner 4);
\path ($(a.corner 1)+(1,0)$) [ultra thick] edge ($(a.corner 5)+(-0.5,0)$);
\path ($(a.corner 10)+(0.5,0)$) [ultra thick] edge (a.corner 4);
\path ($(a.corner 10)+(0.5,0)$) edge ($(a.corner 5)+(-0.5,0)$);

\path[fill=red,opacity=0.1] (a.corner 4) -- ($(a.corner 1)+(1,0)$) -- ($(a.corner 10)+(0.5,0)$) -- ($(a.corner 5)+(-0.5,0)$) -- cycle;
\end{tikzpicture}
\caption{The quadrilateral formed by $F_{10}, F_{4}$.}
  \label{10gon_a_new}
\end{subfigure}
\begin{subfigure}[b]{0.45 \textwidth}
\centering
\begin{tikzpicture}
\draw[->,thick] (0,0)--(4.5,0) node[right]{$x$};
\draw[->,thick] (0,0)--(0,4.5) node[above]{$y$};
\draw [->, ultra thick] (0,0) -- ($(a.corner 10)+(0.5,0)-(a.corner 4)$) node[right]  {$v_{10} - v_{4}$};
\draw [->] (0,0) -- ($(a.corner 10)+(0.5,0)-(a.corner 5)-(-0.5,0)$) node[right]  {$v_{10} - v_{5}$};
\draw [->] (0,0) -- ($(a.corner 1)+(1,0)-(a.corner 4)$) node[right]  {$v_{1} - v_{4}$};
\draw [->, ultra thick] (0,0) -- ($(a.corner 1)+(1,0)-(a.corner 5)-(-0.5,0)$) node[right]  {$v_{1} - v_{5}$};
\path[fill=blue,opacity=0.1] (0,0) -- ($(a.corner 10)+(0.5,0)-(a.corner 4)$) -- ($(a.corner 1)+(1,0)-(a.corner 5)-(-0.5,0)$) -- cycle;

\end{tikzpicture}
\caption{The bisection cone $\mathcal{B}_{10, 4}$.}
  \label{10gon_b_new}
\end{subfigure}
\caption{The generators of $\mathcal{B}_{10, 4}$ arise from the diagonals of the quadrilateral with vertices $v_{10},v_1,v_4$ and $v_5$.}
\label{10gon_new}
\end{figure}
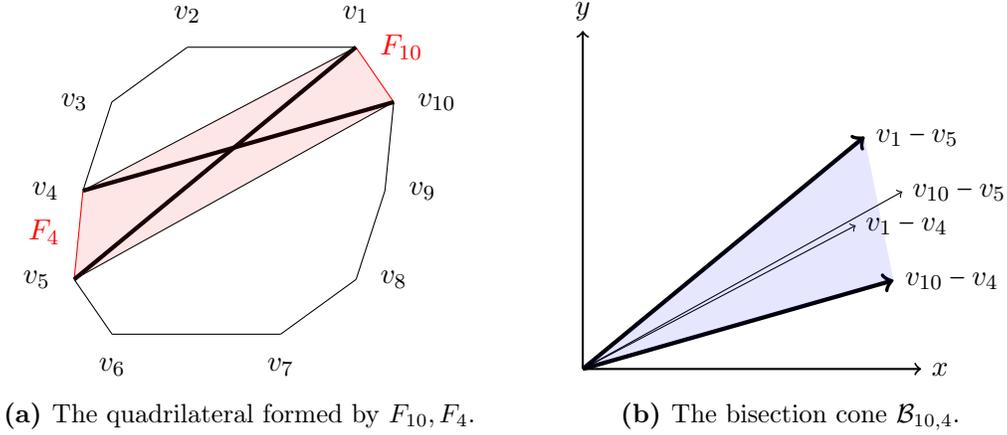

If $j\not \in \{i-1,i+1\}$ we observe that $\mathcal{B}_{i, j}$ is generated by those two $v-u$, $v\in \{v_i,v_{i+1}\}$ and $u\in \{v_j,v_{j+1}\}$, such that the angle between the corresponding line through $u$ and $v$ and the positive $x$-axis is the greatest and least, respectively. 
This is equivalent to finding the diagonals in the quadrilateral with vertices $v_{i}, v_{i+1}, v_{j}, v_{j+1}$, listed in counterclockwise order, which are precisely $\{v_{i}-v_{j}, v_{i+1}-v_{j+1}\}$, see Figure \ref{10gon_new} for an illustration.
\end{proof}

In order to determine the bisection fan of polygons we need the following decomposition lemma for half-planes. For any distinct $i,j$ in $[2n]$ let $H_{i,j}^\geq$ be the half-plane consisting of all vectors that enclose an angle between $0$ and $\pi$ with $v_i-v_j$ in counter-clockwise direction.

Then we have the following decomposition of $H_{i,j}^\geq$ into bisection cones the interiors of which are pairwise disjoint.

\begin{lem}\label{lem:2dim_biscone_decomp}
    For distinct $i, j \in [2n],$ 
    \begin{equation*}\label{eq:hs_decomp_poly}
 H_{i,j}^{\geq} = \bigcup_{k=0}^{n-1}\mathcal{B}_{i+k, j+k}  
\end{equation*}
where 
\begin{align*}
    \mathcal{B}_{i+k, j+k}\cap \mathcal{B}_{i+k+1, j+k+1}&=\mathbb{R}\cdot (v_{i+k+1}-v_{j+k+1}) &&\text{ for }0\leq k\leq n-2 \, , \text{ and}\\
    \interior \mathcal{B}_{i+k, j+k} \cap \interior \mathcal{B}_{i+\ell, j+\ell}&=\emptyset &&\text{ for  }0\leq k\neq \ell\leq n-1 \, .
\end{align*}
\end{lem}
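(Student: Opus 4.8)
The plan is to track how the rays $v_i - v_j$ rotate as we increment indices, and to use the ray description of bisection cones from Proposition~\ref{refined_2_dim} together with the convexity of $P$. First I would set up the angular picture: for a centrally symmetric $2n$-gon with vertices $v_1,\dots,v_{2n}$ in counterclockwise order, the edge vectors $v_{i+1}-v_i$ rotate monotonically through a full turn, with $v_{i+n+1}-v_{i+n} = -(v_{i+1}-v_i)$. Consequently, as the second index $j$ runs over $j, j+1, \dots, j+n-1$, the difference vectors $v_i - v_j$ (for fixed $i$) sweep monotonically from $v_i - v_j$ around to $v_i - v_{j+n} = v_i - (\text{antipode of } v_j)$, which is the direction $v_i - v_j$ rotated by $\pi$ — i.e., they traverse exactly the half-plane $H_{i,j}^{\geq}$. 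The key elementary claim I would isolate and prove is this: for $i \ne j$ (and indices not making $F_i, F_j$ adjacent, with the adjacent and equal cases handled separately as in Proposition~\ref{refined_2_dim}), the cone $\mathcal{B}_{i,j} = \cone(\{v_i - v_j,\ v_{i+1}-v_{j+1}\})$ occupies the angular sector \emph{between} the directions $v_i - v_j$ and $v_{i+1} - v_{j+1}$, and the ``next'' cone $\mathcal{B}_{i+1, j+1} = \cone(\{v_{i+1}-v_{j+1},\ v_{i+2}-v_{j+2}\})$ picks up exactly where the previous one left off, sharing the ray $\mathbb{R}_{\geq 0}\cdot(v_{i+1}-v_{j+1})$.

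Carrying this out, the steps in order would be: (1) Reduce to $i < j$ and, using Corollary~\ref{bfg_swap} and the central symmetry, normalize so that $v_i - v_j$ points in a fixed reference direction (say, make it the ``starting'' boundary ray of $H_{i,j}^{\geq}$). (2) Prove the monotonicity statement: the argument (angle) of $v_{i+k} - v_{j+k}$, as a function of $k \in \{0, 1, \dots, n\}$, is strictly increasing, with total increase exactly $\pi$; this follows by writing $v_{i+k}-v_{j+k} = (v_{i+k}-v_i) - (v_{j+k}-v_j) + (v_i - v_j)$ and noting $v_{i+k}-v_i$ and $v_{j+k}-v_j$ are both sums of consecutive edge vectors of $P$, whose directions are monotone — alternatively, argue directly that the segment $[v_j, v_i]$ rotates monotonically to $[v_{j+n}, v_{i+n}] = [-v_j, -v_i]$ because $P$ is convex. (3) Invoke Proposition~\ref{refined_2_dim} to identify each $\mathcal{B}_{i+k, j+k}$ as the closed sector spanned by consecutive difference vectors (treating the boundary case $k = n-2$, where $\mathcal{B}_{i+n-1, j+n-1}$ has $j + n - 1 = (j-1) + n$ making $F_{i+n-1}$ and $F_{j+n-1}$ antipodal-adjacent — I'd double-check the adjacency/equality bookkeeping here and reduce to the cases already settled in Proposition~\ref{refined_2_dim}). (4) Conclude that the union of these $n$ consecutive sectors is precisely the half-plane $H_{i,j}^{\geq}$, that consecutive sectors meet exactly along the common ray $\mathbb{R}\cdot(v_{i+k+1}-v_{j+k+1})$ (using that all these vectors pass through the boundary and the sectors are on consistent sides), and that non-consecutive sectors have disjoint interiors because the angular sweep is monotone and the total angle is $\pi < 2\pi$, so no wraparound overlap occurs.

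\textbf{Main obstacle.} I expect the genuine difficulty to lie in step~(2) — rigorously establishing the strict monotonicity of the angle of $v_{i+k}-v_{j+k}$ and that the total sweep is exactly $\pi$ — and in the careful handling of the degenerate indices near $k = n-2$ and $k = n-1$, where the pairs of facets become adjacent or where one must be careful that $\mathcal{B}_{i+k,j+k}$ is genuinely two-dimensional (Proposition~\ref{dim_bfg}) rather than collapsing. The monotonicity is ``geometrically obvious'' from a picture of the convex polygon, but turning it into a clean argument requires either a careful accounting of edge-vector directions or an appeal to the convexity of $P$ in the form that the supporting-line directions along the boundary are monotone; I would likely phrase it via the observation that $v_{i+k} - v_{j+k}$ is, up to a positive scalar, a convex combination direction lying in the sector between $v_i - v_j$ and $v_{i+1}-v_{j+1}$ when $k$ is incremented, and then chain these sectors. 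The emptiness-of-interiors and the exact intersection equalities then follow formally once the monotone angular picture is in place.
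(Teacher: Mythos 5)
Your proposal is correct and follows essentially the same route as the paper: the paper's proof likewise observes that the vectors $v_{i+k}-v_{j+k}$ are encountered in counter-clockwise order as $k$ increases, uses central symmetry to get $v_{i+n}-v_{j+n}=-(v_i-v_j)$ so that the sweep is exactly $\pi$, and then invokes Proposition~\ref{refined_2_dim} to identify each $\mathcal{B}_{i+k,j+k}$ with the sector spanned by consecutive difference vectors. The monotonicity you single out as the main obstacle is exactly the step the paper also treats as an observation, and your worry about degenerate indices is unnecessary since $i+k\neq j+k$ throughout and Proposition~\ref{refined_2_dim} covers all $i\neq j$ uniformly.
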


\begin{proof}
     Observe that if we start at $v_{i}-v_{j}$ and move in a counter-clockwise direction, we encounter $v_{i+1}-v_{j+1}, v_{i+2}-v_{j+2}, \ldots , v_{i+n}-v_{j+n}$ in that order. Further,  by central symmetry of $P$, $v_{i+n}-v_{j+n} = v_{j}-v_{i}=-(v_i-v_j)$. In particular, for $k=1, \ldots , n-1$ each $v_{i+k}-v_{j+k}$ lies in $H^{\geq}_{i,j}$. The claim now follows from Proposition~\ref{refined_2_dim}, since for $k=0, \ldots, n-1$, each bisection cone $\mathcal{B}_{i+k, j+k}$ is generated by the consecutive vectors $v_{i+k}-v_{j+k},v_{i+k+1}-v_{j+k+1}$. See Figure~\ref{decagon_2} for an illustration of the decomposition. 
\end{proof}

 \begin{figure}[ht]
\centering
\scalebox{0.80}{
\begin{tikzpicture}
    \node[draw=none,minimum size=4cm,regular polygon,regular polygon sides=10] (a) {};

\foreach \i in {1}
    \node[circle, label=above:$v_{\i}$] at ($(a.corner \i)+(1,0)$) {};
\foreach \i in {2}
    \node[circle, label=above:$v_{\i}$] at ($(a.corner \i)$) {};
\foreach \i in {6}
    \node[circle, label=left:$v_{\i}$] at ($(a.corner \i)+(-1,0)$) {};
\foreach \i in {7}
        \node[circle, label=right:$v_{\i}$] at (a.corner \i) {};
\foreach \i in {3,4}
    \node[circle, label=left:$v_{\i}$] at (a.corner \i) {};
\foreach \i in {5}
    \node[circle, label=left:$v_{\i}$] at ($(a.corner \i)+(-0.5,0)$) {};
\foreach \i in {8,9}
    \node[circle, label=right:$v_{\i}$] at (a.corner \i) {};
\foreach \i in {10}
    \node[circle, label=right:$v_{\i}$] at ($(a.corner \i)+(0.5, 0)$) {};

\draw ($(a.corner 1)+(1,0)$) -- (a.corner 2);
\draw [black] ($(a.corner 1)+(1,0)$) -- ($(a.corner 10)+(0.5,0)$) node[]  {};
\draw [black] (a.corner 9)-- ($(a.corner 10)+(0.5,0)$) node[]  {};
\draw [black] (a.corner 4)-- ($(a.corner 5)+(-0.5,0)$) node[]  {};
\draw [black] ($(a.corner 6)+(-1,0)$)-- ($(a.corner 5)+(-0.5,0)$) node[]  {};
\draw [black] ($(a.corner 6)+(-1,0)$)-- (a.corner 7) node[]  {};

\foreach \i[evaluate=\i as \j using int(\i+1)] in {7,8}
    \draw (a.corner \i) -- (a.corner \j) {};
\foreach \i[evaluate=\i as \j using int(\i+1)] in {2,3}
    \draw (a.corner \i) -- (a.corner \j) {};
\foreach \i[evaluate=\i as \j using {int(Mod(\i+6,10))}] in {2,3}
    \draw [->, ultra thick] (a.center) -- ($(a.corner \i)-(a.corner \j)$) node[left]  {$v_{\i}-v_{\j}$};
    \draw [->, ultra thick] (a.center) -- ($(a.corner 7)-(a.corner 1)-(1,0)$) node[left, label=below:$v_{6} - v_{2}$, label={}, label={[xshift=-0.9cm, yshift=0.3cm]$\mathcal{B}_{5, 1}$}]{};
\draw [->, ultra thick] (a.center) -- ($(a.corner 1)+(1,0)-(a.corner 7)$) node[right, label={[xshift=-1.9cm, yshift=-1cm]$\mathcal{B}_{1, 7}$}]{$v_{1} - v_{7}$};
\draw [->, ultra thick] (a.center) -- ($(a.corner 4)-(a.corner 10)-(0.5, 0)$) node[right, label = {[xshift=-0.9cm, yshift=-0.4cm]$v_{4} - v_{10}$}, label={[xshift=0.6cm, yshift=0.6cm]$\mathcal{B}_{3, 9}$}, label={[xshift=1.4cm, yshift=2.5cm]$\mathcal{B}_{2, 8}$}]{};
\draw [->, ultra thick] (a.center) -- ($(a.corner 5)+(-1, 0)-(a.corner 1)-(1,0)$) node[right, label={[xshift=-0.9cm, yshift=-0.6cm]$v_{5} - v_{1}$}, label={[xshift=0cm, yshift=0.45cm]$\mathcal{B}_{4, 10}$}, label={[xshift=-1.50cm, yshift=2.50cm]$\mathbf{H_{1, 7}^{\geq}}$}]{};
\end{tikzpicture}
}
\caption{The half-plane $H^{\geq}_{1,7}$ is subdivided into $5$ bisection cones $\mathcal{B}_{1, 7}$, $\mathcal{B}_{2, 8}$, $\mathcal{B}_{3, 9}$, $\mathcal{B}_{4, 10}$ and $ \mathcal{B}_{5, 1}.$}
\label{decagon_2}
\end{figure}
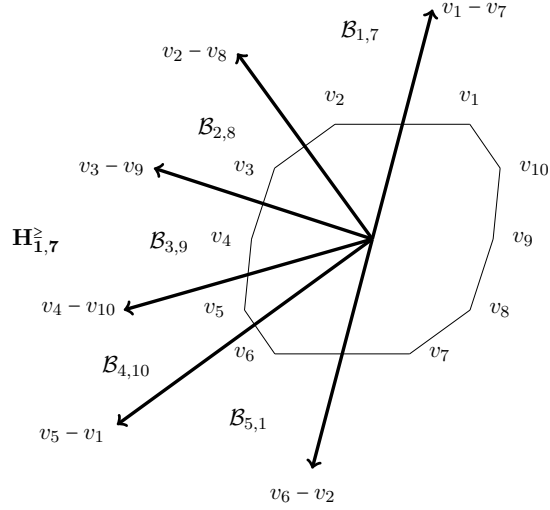

The decomposition provided by the above lemma allows us to determine the bisection fan of centrally symmetric polygons.

\begin{thm}\label{prop:bisfan_polygon}
Let $P \subset \mathbb{R}^{2}$ be a centrally symmetric $2n$-gon. Then the bisection fan of $P$ is the complete polyhedral fan $\Delta$ the rays of which are given by $v_{i}-v_{j}$, for every $i, j \in [2n]$, $i\neq j$.
\end{thm}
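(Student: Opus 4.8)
The plan is to verify that the fan $\Delta$ satisfies the two equivalent conditions of Definition~\ref{bisfandef}. Throughout I abbreviate $\mathcal{B}_{i,j}=\mathcal{B}_{F_i,F_j}$. The single structural fact on which everything rests is this: by Proposition~\ref{refined_2_dim}, $\mathcal{B}_{i,j}=\cone(\{v_i-v_j,\ v_{i+1}-v_{j+1}\})$ for $i\neq j$, and by Proposition~\ref{dim_bfg} this cone is two-dimensional, hence a pointed wedge of angle strictly less than $\pi$ whose two extreme rays are rays of $\Delta$; and $\mathcal{B}_{i,i}=\mathbb{R}\cdot(v_{i+1}-v_i)$ is the union of the two $\Delta$-rays $\pm(v_{i+1}-v_i)$ with the origin. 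Consequently \emph{every bisection cone is a union of closed cones of $\Delta$}, since the $\Delta$-rays contained in the wedge $\mathcal{B}_{i,j}$ subdivide it into maximal cones of $\Delta$. (That $\Delta$ is a complete polyhedral fan with exactly the stated rays is clear: it is the fan of the central line arrangement $\{\mathbb{R}(v_i-v_j):i\neq j\}$, which by central symmetry has an antipodally symmetric set of directions and, by full-dimensionality of $P$, at least two distinct lines, so each maximal cone of $\Delta$ is a pointed wedge of angle less than $\pi$.)

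The implication (ii)$\Rightarrow$(i) is then immediate: if $a,b$ lie in the interior of the same maximal cone $C$ of $\Delta$, then $C$ is the unique cone of $\Delta$ containing $a$, and likewise for $b$; since each $\mathcal{B}_{F,G}$ is a union of closed cones of $\Delta$, this gives $a\in\mathcal{B}_{F,G}\iff C\subseteq\mathcal{B}_{F,G}\iff b\in\mathcal{B}_{F,G}$ for every facet pair $(F,G)$, i.e. $\bis^P(0,a)$ and $\bis^P(0,b)$ are equivalent.

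For the converse (i)$\Rightarrow$(ii) I would argue the contrapositive. First I would show that a point $c$ in general position meets no ray of $\Delta$: if $c\in\relint\rho$ with $\rho=\mathbb{R}_{\geq0}(v_i-v_j)$, $i\neq j$, then $v_i-v_j$ is the second generator of $\mathcal{B}_{i-1,j-1}$ and the first generator of $\mathcal{B}_{i,j}$, and by Lemma~\ref{lem:2dim_biscone_decomp} these are consecutive, interior-disjoint cones of the decomposition of $H_{i-1,j-1}^{\geq}$ meeting exactly along $\rho$; hence they are two-dimensional pointed cones on opposite closed sides of the line $\mathbb{R}\rho$. So for $w$ pointing from $\rho$ into $\mathcal{B}_{i,j}$ and $\eps>0$ small, $c+\eps w\in\mathcal{B}_{i,j}$ while $c-\eps w\notin\mathcal{B}_{i,j}$; these points are inequivalent, so no neighbourhood of $c$ lies in one equivalence class and $c$ is not in general position. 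Thus points $a,b$ in general position lie in the interiors of maximal cones $D_a\neq D_b$ of $\Delta$. Let $\rho_k,\rho_{k+1}$ be the boundary rays of $D_a$ in counter-clockwise order (so $D_a$ has angle less than $\pi$), write $\rho_{k+1}=\mathbb{R}_{\geq0}(v_i-v_j)$ and $\rho_k=\mathbb{R}_{\geq0}(v_{i'}-v_{j'})$. By the structural fact, $\mathcal{B}_{i-1,j-1}$ is the union of the maximal cones of $\Delta$ running clockwise from $\rho_{k+1}$ up to $v_{i-1}-v_{j-1}$, hence contains $D_a$ and lies in the closed half-plane $H^{-}$ of $\mathbb{R}\rho_{k+1}$ on the side of $D_a$; symmetrically $\mathcal{B}_{i',j'}$ contains $D_a$ and lies in the closed half-plane $H^{+}$ of $\mathbb{R}\rho_k$ on the side of $D_a$. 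A short angle computation using that the angle of $D_a$ is less than $\pi$ gives $H^{-}\cap H^{+}=D_a$. Since $D_b\not\subseteq D_a$, the cone $D_b$ is not contained in $H^{-}$ or not contained in $H^{+}$; in the first case $D_b\not\subseteq\mathcal{B}_{i-1,j-1}$, so $b\notin\mathcal{B}_{i-1,j-1}$ while $a\in D_a\subseteq\mathcal{B}_{i-1,j-1}$, and in the second case the same holds with $\mathcal{B}_{i',j'}$. Either way $\bis^P(0,a)$ and $\bis^P(0,b)$ are inequivalent, which is the contrapositive of (i)$\Rightarrow$(ii).

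The routine parts — that $\Delta$ is a complete fan, the perturbation argument for general position, and the elementary angle bookkeeping — should cause no trouble. The one genuinely delicate point is the converse direction: because bisection cones can be large, a point lying "across a ray'' from $D_a$ need not leave a single bisection cone, so one cannot separate $D_a$ from an arbitrary $D_b$ using just one ray. The fix is to use \emph{both} bounding rays of $D_a$ simultaneously via the identity $H^{-}\cap H^{+}=D_a$, which is exactly where the angle of $D_a$ being less than $\pi$ — itself a consequence of central symmetry — enters. Everything else reduces to Propositions~\ref{refined_2_dim} and~\ref{dim_bfg} and Lemma~\ref{lem:2dim_biscone_decomp}.
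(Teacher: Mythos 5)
Your proof is correct, and the forward direction (same maximal cone $\Rightarrow$ equivalent bisectors) is essentially the paper's argument: both reduce to the fact that, by Proposition~\ref{refined_2_dim}, every bisection cone is bounded by rays of $\Delta$ and hence is a union of closed cones of $\Delta$. The converse is where you diverge. The paper picks a single line $\mathbb{R}\cdot(v_i-v_j)$ separating $a$ from $b$ and invokes Lemma~\ref{lem:2dim_biscone_decomp} directly: $a$ lies in the closed half-plane $H^{\geq}_{i,j}=\bigcup_{k}\mathcal{B}_{i+k,j+k}$, hence in some $\mathcal{B}_{i+k,j+k}$, while $b$ lies in the complementary \emph{open} half-plane and therefore in none of them. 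This dissolves exactly the worry you flag ("a point across one ray need not leave a single bisection cone"): one does not need $b$ to leave the particular cone adjacent to $D_a$ across that ray, only to leave the whole half-plane, which kills all $n$ cones of the decomposition at once. Your alternative — anchoring at $D_a$, using both of its bounding rays, and the identity $H^{-}\cap H^{+}=D_a$ to force $D_b$ out of one of the two bisection cones $\mathcal{B}_{i-1,j-1}$, $\mathcal{B}_{i',j'}$ containing $D_a$ — is a valid workaround but is more labored than necessary. On the other hand, your preliminary observation that a point in general position cannot lie in the relative interior of a ray of $\Delta$ (via the perturbation across the common extreme ray of two consecutive cones in Lemma~\ref{lem:2dim_biscone_decomp}) is a genuine addition: the paper's proof only treats points lying in interiors of maximal cones and leaves this case implicit, even though Definition~\ref{bisfandef} requires it.
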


\begin{proof}
We need to show the following: \begin{enumerate}
    \item For two points $a, b \in \mathbb{R}^{2}$ lying in the interior of the same maximal cone of $\Delta$, $\bis^{P}(0, a)$ is equivalent to $\bis^{P}(0, b)$, and 
    \item For two points $a, b \in \mathbb{R}^{2}$ lying in the interior of different maximal cones of $\Delta$, $\bis^{P}(0, a)$ is not equivalent to $\bis^{P}(0, b).$
\end{enumerate}
For the first point, assume that $a$ and $b$ lie in the interior of the same maximal cone but $\bis^{P}(0, a)$ is not equivalent to $\bis^{P}(0, b).$ Then we can find some bisection cone $\mathcal{B}_{i, j}$ containing $a$ but not $b$. Then, by Proposition~\ref{refined_2_dim}, there must be a line generated by $v-w$, for $v \in \{v_{i}, v_{i+1}\}$ and  $w \in \{v_{j}, v_{j+1}\}$ that separates $a$ and $b$. By the definition of $\Delta$, this contradicts the assumption on $a$ and $b.$

For the second point, let $a$ and $b$ be two points lying in the interior of different maximal cones of $\Delta.$  By definition of $\Delta$, there exists a line $H$ separating $a$ and $b$ of the form $H = \mathbb{R} \cdot (v_{i} - v_{j})$, for some $1 \leq i \neq j \leq d.$ Without loss of generality we may assume that $a\in H^{\geq}_{i,j}$ and $b\not \in H^{\geq}_{i,j}$. Then, by Lemma~\ref{lem:2dim_biscone_decomp}, $a\in \mathcal{B}_{i+k,j+k}$ but $b\not \in \mathcal{B}_{i+k,j+k}$ for some $0\leq k \leq n-1$. Therefore, $\bis^{P}(0, a)$ is not equivalent to $\bis^{P}(0, b)$, as claimed. 
\end{proof}

In the previous section, we emphasised that the concept of the bisection fan was a metric rather a combinatorial one. Here we justify the latter claim by presenting an example of combinatorially isomorphic polygons with non-isomorphic bisection fans.

\begin{example}\label{ex:noncombiproperty}
Consider the regular hexagon with vertices $w_{1}, \ldots , w_{6}$ and an irregular hexagon whose vertices $v_{1}, \ldots , v_{6}$ have been perturbed, for example, in the following manner: $v_{1} = w_{1}, v_{2} = w_{2}, v_{3} = w_{3} + (0.4, -0.4), v_{4} = w_{4}, v_{5} = w_{5}, v_{6} = w_{6} +(-0.4, 0.4).$ Observe that no  main diagonal is parallel to any edge of the hexagon in the second case, while there are several such examples in the first case. Thus, the number of rays of the bisection fan in of the irregular hexagon is greater than that of the regular one. See Figure~\ref{10gon} for an illustration. 

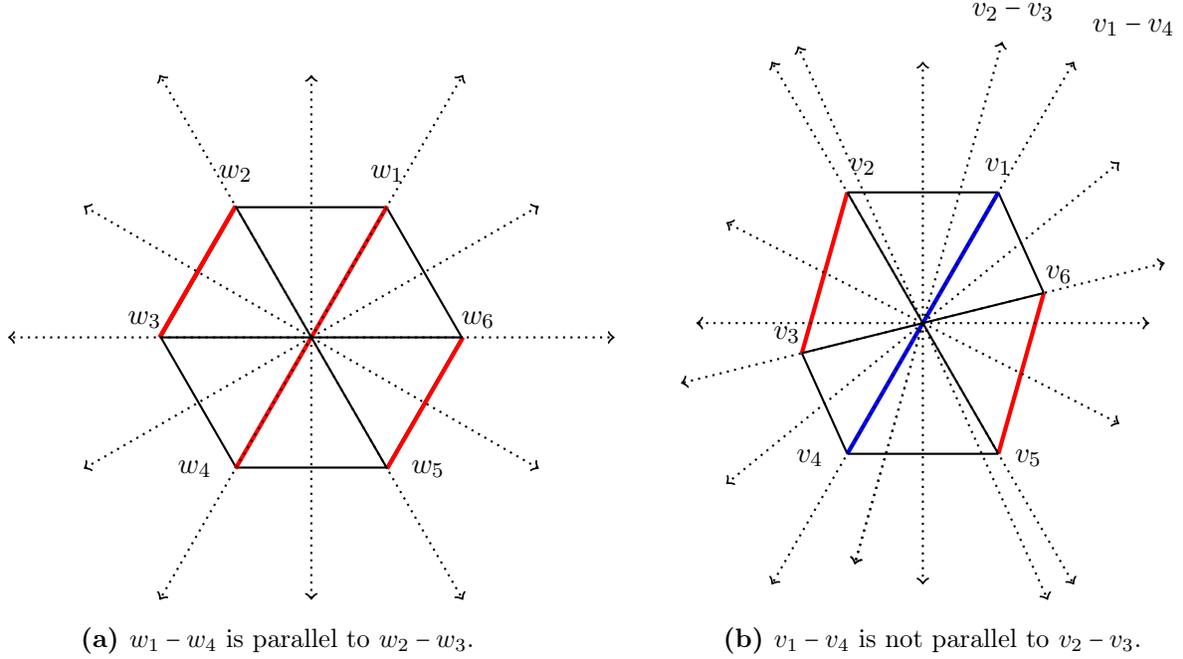
\begin{figure}[h]
\begin{subfigure}[b]{0.45\textwidth}
\centering
   \begin{tikzpicture}
   \node [
    regular polygon, 
    regular polygon sides=6, 
    minimum size=4 cm, 
    draw=black, thick
] 
at (0,0) (A) {};
\foreach \i in {1,2}
    \node[circle, label=above:$w_{\i}$] at (A.corner \i) {};
\foreach \i in {4}
    \node[circle, label=left:$w_{\i}$] at (A.corner \i) {};
\foreach \i in {3}
    \node[circle, label={[xshift=-0.2 cm, yshift= -0.2cm]{$w_{\i}$}}] at (A.corner \i) {};
\foreach \i in {5}
    \node[circle, label=right:$w_{\i}$] at (A.corner \i) {};
\foreach \i in {6}
    \node[circle, label={[xshift=0.2 cm, yshift= -0.2cm]{$w_{\i}$}}] at (A.corner \i) {};
\draw [red, ultra thick] ($(A.corner 2)$)  -- ($(A.corner 3)$) node[right, label ={}]{}; 
\draw [red, ultra thick] ($(A.corner 2)$)  -- ($(A.corner 3)$) node[right, label ={}]{}; 
\draw [red, ultra thick] ($(A.corner 1)$)  -- ($(A.corner 4)$) node[right, label ={}]{}; 
\draw [red, ultra thick] ($(A.corner 5)$)  -- ($(A.corner 6)$) node[right, label ={}]{}; 

\draw [black, thick] ($(A.corner 2)$)  -- ($(A.corner 5)$) node[right, label ={}]{};
\draw [black, thick] ($(A.corner 3)$)  -- ($(A.corner 6)$) node[right, label ={}]{};
\foreach \i in {1,...,6}
    \draw [dotted, thick, ->] (A.center) -- ($2*(A.corner \i)$) node[right]  {};
 \draw [dotted, thick, ->] (A.center) -- ($(A.corner 1)+(A.corner 6)$) node[right]  {};
\foreach [
    evaluate=\i as \j using int(\i+1)
    ] \i in {1,...,5}
    \draw [dotted, thick, ->] (A.center) -- ($(A.corner \i)+(A.corner \j)$) node[right, label=above:{}]  {};
\end{tikzpicture}
\caption{$w_{1}-w_{4}$ is parallel to $w_{2}-w_{3}$.}
  \label{10gon_a}
\end{subfigure}
~
\hspace{1 cm}
\begin{subfigure}[b]{0.45 \textwidth}
\centering
\begin{tikzpicture}
\node[draw=none,minimum size=4cm,regular polygon,regular polygon sides=6] (a) {};

\node[dot=0pt, fill=black, label={[xshift=0cm, yshift=0.1cm]{$v_{1}$}}] at ($(a.corner 1)$) (v1) {};
\node[dot=0pt, fill=black, label={[xshift=0.2cm, yshift=0.1cm]{$v_{2}$}}] at ($(a.corner 2)$)  (v2) {};
\node[dot=0pt, fill=black, label={[xshift=-0.2cm, yshift=0cm]{$v_{3}$}}] at ($(a.corner 3)+(0.4, -0.4)$) (v3) {};
\node[dot=0pt, fill=black, label={[xshift=-0.5cm, yshift=-0.3cm]{$v_{4}$}}] at ($(a.corner 4)$) (v4) {};
\node[dot=0pt, fill=black, label={[xshift=0.4cm, yshift=-0.3cm]{$v_{5}$}}] at ($(a.corner 5)$) (v5) {};
\node[dot=0pt, fill=black, label={[xshift=0.2cm, yshift=0cm]{$v_{6}$}}] at ($(a.corner 6)+(-0.4, 0.4)$) (v6) {};

\draw [red, ultra thick] (v2)  -- (v3) node[right, label ={}]{};
\draw [red, ultra thick] (v5)  -- (v6) node[right, label ={}]{};

\foreach \i[evaluate=\i as \j using int(\i+1)] in {1,3,4}
    \draw [black, thick] (v\i) -- (v\j) node[right]  {};
\draw [black, thick] (v1) -- (v6) node[right]  {};

\draw [blue, ultra thick] (v1)  -- (v4) node[right, label ={}]{};
\draw [black, thick] (v2)  -- (v5) node[right, label ={}]{};
\draw [black, thick] (v3)  -- (v6) node[right, label ={}]{};
\foreach \i in {1,...,6}
    \draw [dotted, thick, ->] (a.center) -- ($2*(v\i)$) node[right]  {};
 \draw [dotted, thick, ->] (a.center) -- ($(v1)+(v6)$) node[right, label=above:{}]  {};
 \draw [dotted, thick, ->] (a.center) -- ($1.75*(v2)-1.75*(v3)$) node[right, label=above:{$v_{2}-v_{3}$}, label={[xshift=1.6cm, yshift= -0.2cm]{$v_{1}-v_{4}$}}]  {};
 \draw [dotted, thick, ->] (a.center) -- ($1.5*(v4)-1.5*(v5)$) node[right]  {};
 \draw [dotted, thick, ->] (a.center) -- ($1.5*(v5)-1.5*(v4)$) node[right]  {};
  \draw [dotted, thick, ->] (a.center) -- ($1.5*(v3)-1.5*(v2)$) node[right]  {};
   \draw [dotted, thick, ->] (a.center) -- ($1.5*(v5)-1.5*(v6)$) node[right]  {};
    \draw [dotted, thick, ->] (a.center) -- ($2.75*(v1)-2.75*(v6)$) node[right]  {};
    \draw [dotted, thick, ->] (a.center) -- ($2.75*(v6)-2.75*(v1)$) node[right]  {};

\foreach [
    evaluate=\i as \j using int(\i+1)
    ] \i in {1,...,5}
    \draw [dotted, thick, ->] (a.center) -- ($(v\i)+(v\j)$) node[right, label=above:{}]  {};
\end{tikzpicture}
\caption{$v_{1}-v_{4}$ is not parallel to $v_{2}-v_{3}$.}
  \label{10gon_b}
\end{subfigure}
\caption{The main diagonals of a regular hexagon are parallel to two edges whereas that does not hold in the case of the irregular hexagon defined above. Hence as indicated, the bisection fan of the irregular hexagon has more rays than that of the regular one, rendering them non-isomorphic as fans.} 
\label{10gon}
\end{figure}

\end{example}

\section{\texorpdfstring{$\ell_{\infty}$}{linf}-norm}\label{sec:cube}

In this section, we determine the bisection fan of the $d$-dimensional cube $P = [-1, +1]^{d}.$ Its distance function is the familiar infinity norm \[
\dist \nolimits^{P}(0, x)= || x||_{\infty}= \max\{|x_{i}|: i=1, \ldots , d\}.
\]
The facets of $P$ are of the form $F_{i}^{+}= \{x \in P: x_{i}=1\}$ and $F_{i}^{-}= \{x \in P: x_{i}=-1\}$, for $i=1, \ldots , d.$ The  corresponding facet cones are:
\begin{align*}
        C_{F_{i}^{+}}&= \{x \in \mathbb{R}^{d}: x_{i}\pm x_{k}\geq 0, k \neq i\} \quad \text{for $i=1, \ldots, d$, and }\\ 
        C_{F_{i}^{-}}&= \{x \in \mathbb{R}^{d}: -x_{i}\pm x_{k}\geq 0, k \neq i\} \quad \text{for $i=1, \ldots, d$}.
    \end{align*}
We now turn to determining the bisection cones of the cube. For convenience, we fix the following notation for the bisection cones and coordinate half-spaces respectively: \begin{align*}
   \mathcal{B}_{{i}^{\sigma_{1}}, \, {j}^{\sigma_{2}}}  & \coloneqq \mathcal{B}_{F_{i}^{\sigma_{1}}, \,  F_{j}^{\sigma_{2}}}  \quad \text{for $\sigma_{1}, \sigma_{2} \in \{+,-\}$}, \\ \mathcal{H}_{i} & \coloneqq \{x \in \mathbb{R}^{d}: x_{i} \geq 0\} \quad \text{for $i \in [d]$}.
\end{align*}

The bisection cones of the cube can then be succinctly described as the intersection of the following  coordinate half-spaces.

\begin{prop}\label{cube:bis_cone}
    Let $\sigma_{1}, \sigma_{2} \in \{+, -\}.$ When $1 \leq i \neq j \leq d$, we have
   \begin{align*}
\mathcal{B}_{i^{\sigma_{1}}, \, j^{\sigma_{2}}} &= \sigma _1\mathcal{H}_{i} \cap (-\sigma_2\mathcal{H}_{j})
\\ \hspace{-2.2 cm}
\text{ \hspace{-2.2 cm}When $i=j$, the bisection cones are:}\\
\mathcal{B}_{i^{\sigma_{1}}, \, i^{\sigma_{2}}} &= \begin{cases}
\{a \in \mathbb{R}^{d}: a_{i} = 0\} \quad &\text{if $\sigma_{1}=\sigma_{2}$}\\
C_{F_{i}^{\sigma_{1}}} \quad &\text{if $\sigma_{1} \neq \sigma_{2}$.}
\end{cases}
\end{align*}
\end{prop}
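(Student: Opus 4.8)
The plan is to compute the bisection cones directly from the conical hull description in Proposition~\ref{raysbfg}, since the vertices of the cube are easy to enumerate. Recall that $\vertex(P)=\{-1,+1\}^d$ and the vertices of $F_i^{\sigma_1}$ are exactly the sign vectors $v$ with $v_i=\sigma_1 1$, similarly for $G=F_j^{\sigma_2}$. Thus $\mathcal{B}_{i^{\sigma_1},j^{\sigma_2}}=\cone\{v-u: v\in\vertex(F_i^{\sigma_1}),\, u\in\vertex(F_j^{\sigma_2})\}$, and I want to identify this cone with the stated intersection of coordinate half-spaces. I would handle the three cases ($i\neq j$; $i=j$ with $\sigma_1\neq\sigma_2$; $i=j$ with $\sigma_1=\sigma_2$) separately, though the last two already follow from the second part of Proposition~\ref{projhom} (taking $z=\sigma_1 e_i$ gives $\mathcal{B}_{F,F}=\{a_i=0\}$ and $\mathcal{B}_{F,-F}=C_F=C_{F_i^{\sigma_1}}$), so really only the case $i\neq j$ needs work.

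For $i\neq j$, by central symmetry and Corollary~\ref{bfg_swap} it suffices to treat one sign pattern, say $\sigma_1=\sigma_2=+$, and deduce the rest; so I would show $\mathcal{B}_{i^+,j^+}=\mathcal{H}_i\cap(-\mathcal{H}_j)=\{a: a_i\geq 0,\ a_j\leq 0\}$. The inclusion $\subseteq$ is the easy direction: any generator $v-u$ with $v_i=1$, $u_j=1$ (and all coordinates in $\{-1,1\}$) satisfies $(v-u)_i\geq 0$ and $(v-u)_j\leq 0$, and the half-space intersection is a cone, so it contains the conical hull. For $\supseteq$, given $a$ with $a_i\geq 0$ and $a_j\leq 0$, I would exhibit $a$ as a nonnegative combination of difference vectors $v-u$ of the required type; the natural approach is to write each standard basis direction: $e_i=\tfrac12((e_i+w)-(-e_i+w))$ where one can choose the $\pm1$ completions $v=e_i+w$, $u=-e_i+w$ to have $v_i=1$, $u_j=\sigma u_j$ with the right sign, and similarly express $-e_j$, while for the remaining coordinates $k\neq i,j$ one uses that $e_k$ (resp.\ $-e_k$) is also such a difference (flipping only the $k$-th coordinate of a common sign vector keeps $v_i=1$, $u_j=1$). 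Summing with coefficients $|a_k|$ reconstructs $a$. The clean way to organize this is to note the generating set contains $e_i - e_j$ together with $\pm 2e_k$ for all $k\neq i,j$ (flip the $k$-th sign) and $2e_i$, $-2e_j$ (obtained by flipping $j$ resp.\ $i$ in a suitable vertex), so $\cone$ of the generators contains $\cone\{e_i-e_j,\ 2e_i,\ -2e_j\}\cup\{\pm 2e_k: k\neq i,j\}$, which is exactly $\{a_i\geq 0, a_j\leq 0\}$.

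The main obstacle, such as it is, is bookkeeping: making sure that when I flip a coordinate $k\neq i,j$ of a vertex of $F_i^+$ I land on another vertex of $F_i^+$ (true, since the $i$-th coordinate is untouched) and likewise that the subtracted vertex stays in $F_j^+$, so that every vector I use really is a legitimate generator $v-u$. Once the generating rays are pinned down, matching $\cone$ of them to the half-space description is an elementary linear-algebra check (the cone spanned by $e_i-e_j$, $2e_i$, $-2e_j$ and $\pm2e_k$ for $k\neq i,j$ is full-dimensional and its facet-defining inequalities are precisely $a_i\geq0$ and $a_j\leq0$). Finally, to cover all four sign patterns I apply Corollary~\ref{bfg_swap}: $\mathcal{B}_{i^-,j^-}=-\mathcal{B}_{i^+,j^+}$, and swapping which index carries which sign is handled by replacing $e_i\mapsto -e_i$ or $e_j\mapsto -e_j$ in the argument, yielding $\mathcal{B}_{i^{\sigma_1},j^{\sigma_2}}=\sigma_1\mathcal{H}_i\cap(-\sigma_2\mathcal{H}_j)$ uniformly.
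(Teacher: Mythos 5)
Your proof is correct, but it follows a genuinely different route from the paper's. The paper argues directly from the definition of $\bis_{F_i^+,F_j^+}(0,a)$: it writes out the five linear conditions (the distance equality plus the facet-cone memberships), combines them to derive $a_i\geq 0$ and $a_j\leq 0$ for necessity, and for sufficiency constructs an explicit witness point $x$ (with $x_i=\sum_m|a_m|$, $x_j=\sum_m|a_m|+a_j$, $x_r=a_r$ otherwise) lying in the cell; the remaining sign patterns are then handled via Proposition~\ref{prop:lineariso} and Corollary~\ref{bfg_swap}, exactly as you do. You instead exploit Proposition~\ref{raysbfg}: since the vertices of $F_i^{\sigma_1}$ are precisely the sign vectors with $i$-th coordinate $\sigma_1$, the generators $v-u$ all visibly satisfy $(v-u)_i\geq 0$ and $(v-u)_j\leq 0$, and conversely the generators include $2e_i$, $-2e_j$ and $\pm 2e_k$ for $k\neq i,j$ (obtained by flipping a single coordinate other than $i$, resp.\ $j$, of a common sign vector), whose conical hull is already all of $\{a_i\geq 0,\ a_j\leq 0\}$. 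Your approach is shorter and purely combinatorial, at the cost of not producing an explicit point of the bisector cell; the paper's computation yields such a witness. One cosmetic point: $e_i-e_j$ itself is not a difference of two vertices (differences of sign vectors lie in $\{-2,0,2\}^d$), only $2(e_i-e_j)$ is — harmless since you are taking conical hulls, and in fact that generator is redundant given $2e_i$ and $-2e_j$. Your treatment of the $i=j$ cases via the second half of Proposition~\ref{projhom} coincides with the paper's.
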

\begin{proof}
We begin by remarking that the cases when $i=j$ follow immediately from the second half of Proposition \ref{projhom}. For the remaining cases, let $1 \leq i \neq j \leq d$. 

We begin with the case $(\sigma _1,\sigma _2)=(+,+)$. Let $a \in \mathcal{B}_{i^{+}, \, j^{+}}.$ Then there exists $x \in \bis(0, a) \cap C_{F_{i}^{+}} \cap ( C_{F_{j}^{+}} +a).$ This is equivalent to the fact that the following linear inequalities hold:
\begin{align}
    &x_{i} = x_{j} - a_{j} \label{cube_1}  \\
    &x_{i} + x_{k} \geq 0 \label{cube_2} \quad \text{for $k \neq i$} \\
    &x_{i} - x_{k} \geq 0 \label{cube_3} \quad \text{for $k \neq i$} \\
    &x_{j} -a_{j} + x_{l} -a _{l} \geq 0 \quad \text{for $l \neq j$}\label{cube_4} \\
    &x_{j} -a_{j} - x_{l} + a _{l} \geq 0 \label{cube_5} \quad \text{for $l \neq j$}
\end{align}
Combining (\ref{cube_1}) and (\ref{cube_5}) with $l=i$ yields that $a_{i} \geq 0$. Similarly, combining (\ref{cube_1}) and (\ref{cube_3}) with $k=j$ yields that $a_{j} \leq 0$. Together, we get that $a \in \mathcal{H}_{i} \cap (-\mathcal{H}_{j}).$ To show the opposite direction, let $a \in \mathcal{H}_{i} \cap (-\mathcal{H}_{j})$ and define $x \in \mathbb{R}^{d}$ as:
\[x_{r} = \begin{cases}
\sum_{m=1}^{d}|a_{m}| \quad &\text{if $r=i$}\\
\sum_{m=1}^{d}|a_{m}| + a_{j} \quad &\text{if $r=j$}\\
a_{r} \quad &\text{if $r \neq i,j$}
\end{cases}
\]
We claim that this point $x$ lies in $\bis_{F_{i}^{+}, \, F_{j}^{+}}(0, a).$ To this end, we need to establish the five conditions above. Equation~(\ref{cube_1}) holds immediately. Inequalities~(\ref{cube_2}) and (\ref{cube_3}) are equivalent to the inequality $x_{i} \geq |x_{k}|$ and this holds immediately for $k \neq i, j.$ It also holds for $k = j$ since \[
x_{i} = \sum_{m=1}^{d}|a_{m}| \geq \sum_{m=1}^{d}|a_{m}| +a_{j}= |x_{j}|.
\]
Similarly inequalities~(\ref{cube_4}) and (\ref{cube_5}) are equivalent to $x_{j}-a_{j} \geq |x _{l} - a_{l}|$ and this holds vacuously when $l \neq i, j$ since the right hand side is zero in this case. This inequality also holds when $l=i$ since \[
x_{j}-a_{j} = x_{i} = \sum_{m=1}^{d}|a_{m}| \geq \sum_{m=1}^{d}|a_{m}| - a_{i}= |x_{i} -a_{i}|.
\] 
With the five inequalities (\ref{cube_1}) to (\ref{cube_5}) being satisfied, we can conclude that $x$ lies in $\bis_{F_{i}^{+}, \, F_{j}^{+}}(0, a).$ This completes the proof of the first case of the proposition.

The remaining three cases, $(\sigma _1,\sigma _2)\in \{(+,-),(-,+), (-,-)\}$, follow from the first case by symmetry. More precisely, the case $(\sigma _1,\sigma _2)=(+,-)$ follows from Proposition~\ref{prop:lineariso} by considering the isomorphism $\pi$ that flips the sign of the $j$-th coordinate. The third case -- that is the case when $\sigma_{1}$ and $\sigma_{2}$ are $-$ and $+$ respectively -- follows from the second case and the third equation in Corollary \ref{bfg_swap}, namely that $\mathcal{B}_{-F, -G}= -\mathcal{B}_{F, G}$. Similarly the case when $\sigma_{1} = -1, \sigma_{2}=-1$ follows from the first case and  Corollary \ref{bfg_swap}.
\end{proof}

The next proposition explicitly describes the bisection fan of the $d$-cube. It is the common refinement of the face fan $\mathcal{F}_{P}$ and the coordinate hyperplane arrangement $\mathcal{H}.$ See Figure \ref{fig:bisfan_cube} for a picture of the $3$-dimensional case.

\begin{thm}\label{prop:bisfancube}
Let $P \subset \mathbb{R}^{d}$ be the cube $[-1, 1]^{d}.$ The bisection fan of $P$ exists and is equal to the common refinement of the fan $\mathcal{H}$ induced by the coordinate hyperplanes and the face fan $\mathcal{F}_{P}$ of~$P$.
\end{thm}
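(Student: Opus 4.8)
The strategy is to show directly that the common refinement $\Delta := \mathcal{H} \wedge \mathcal{F}_P$ satisfies both implications in Definition~\ref{bisfandef}. The implication (ii)$\Rightarrow$(i) is the easier direction and follows from Proposition~\ref{cube:bis_cone}: each bisection cone $\mathcal{B}_{i^{\sigma_1},j^{\sigma_2}}$ is, by that proposition, either a coordinate half-space intersection $\sigma_1\mathcal{H}_i \cap (-\sigma_2\mathcal{H}_j)$, a coordinate hyperplane $\{a_i=0\}$, or a facet cone $C_{F_i^{\sigma}}$. In every case $\mathcal{B}_{i^{\sigma_1},j^{\sigma_2}}$ is a union of closed cones of $\Delta$: a coordinate half-space is a union of cones of $\mathcal{H}$ hence of $\Delta$, a coordinate hyperplane is a union of walls, and a facet cone is a cone of $\mathcal{F}_P$. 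Consequently, if $a,b$ lie in the interior of the same maximal cone $\sigma$ of $\Delta$, then for each facet pair $(F,G)$ we have $\sigma \subseteq \mathcal{B}_{F,G}$ or $\sigma \cap \interior\mathcal{B}_{F,G} = \emptyset$, and since $a,b$ are in general position (so they avoid the boundary walls where $\bis_{F,G}$ degenerates) we get $a\in\mathcal{B}_{F,G} \Leftrightarrow b\in\mathcal{B}_{F,G}$ for all $(F,G)$. By Equation~\eqref{e_s} the bisectors are equivalent.

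For the converse (i)$\Rightarrow$(ii), I argue the contrapositive: if $a,b$ are in general position but lie in the interiors of \emph{different} maximal cones of $\Delta$, then some cone of $\mathcal{H}$ or some cone of $\mathcal{F}_P$ separates them — more precisely, either a coordinate hyperplane $\{x_i=0\}$ strictly separates them, or they lie in different maximal cones of the face fan $\mathcal{F}_P$. In the first case, Proposition~\ref{h_decomp} (applied to the facet $F_i^{\sigma}$) shows there is a facet $G$ with $a\in\mathcal{B}_{F_i^\sigma,G}$, $b\notin\mathcal{B}_{F_i^\sigma,G}$ (or vice versa), so by Equation~\eqref{e_s} the bisectors are not equivalent. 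In the second case, Proposition~\ref{lem:bis_refine} (the $\mathcal{F}_P$ part, using $\mathcal{B}_{F,-F}=C_F$) gives the non-equivalence directly. Actually, the cleanest route is to invoke Proposition~\ref{lem:bis_refine} in one stroke: it asserts precisely that being separated by a hyperplane of $\mathcal{H}_P$ (which for the cube is the coordinate arrangement $\mathcal{H}$) or lying in different maximal cells of $\mathcal{F}_P$ implies non-equivalence. Since $\Delta$ is by construction the common refinement of $\mathcal{H}$ and $\mathcal{F}_P$, two points in different maximal cones of $\Delta$ are necessarily separated by $\mathcal{H}$ or lie in different maximal cones of $\mathcal{F}_P$, and we are done.

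\textbf{Main obstacle.} The routine part — verifying that coordinate half-spaces and facet cones decompose into $\Delta$-cones, and assembling the two implications — is straightforward given Proposition~\ref{cube:bis_cone}, Proposition~\ref{h_decomp}, and Proposition~\ref{lem:bis_refine}. The one point requiring care is the role of \emph{general position}: one must check that the maximal cones of $\Delta$ have interiors contained in the general-position locus, equivalently that the walls of $\Delta$ (which include all coordinate hyperplanes) already contain the locus where some $\bis_{F,G}(0,\cdot)$ changes from empty to nonempty or drops dimension. Since by Proposition~\ref{prop:weakgeneral} full-dimensional cells appear exactly when $a-b$ is parallel to a facet, i.e. on a coordinate hyperplane, and these are walls of $\mathcal{H}\subseteq\Delta$, the general-position locus is exactly the complement of the walls of $\Delta$ together possibly with finer strata — but Proposition~\ref{cube:bis_cone} shows the bisection cone boundaries are themselves unions of $\Delta$-walls, so no finer strata occur. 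This closes the argument. I would also remark at the end that, combined with Proposition~\ref{bis_refine}, this shows $\Delta$ is the coarsest fan it could be, confirming the claim in the introduction that for the cube the bisection fan equals $\mathcal{H}_P \wedge \mathcal{F}_P$.
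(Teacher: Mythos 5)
Your proposal is correct and follows essentially the same route as the paper: both directions ultimately rest on Proposition~\ref{cube:bis_cone}, which shows each bisection cone of the cube is a union of closed cones of the common refinement, so that points in the interior of the same maximal cone are simultaneously (non-)members of every $\mathcal{B}_{i^{\sigma_1},j^{\sigma_2}}$, while points in different maximal cones are distinguished either by an off-diagonal bisection cone (different orthant) or by $\mathcal{B}_{i^{\sigma},i^{-\sigma}}=C_{F_i^{\sigma}}$ (different facet cone). Your shortcut of invoking Proposition~\ref{lem:bis_refine} for the converse direction, rather than re-deriving it from the explicit cube bisection cones as the paper does, is a legitimate cosmetic variation (it is exactly how the paper handles the analogous step for the cross-polytope).
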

\begin{proof}
Let $\Delta$ be the common refinement of $\mathcal{H}$ and $\mathcal{F}_{P}.$
We need to show that for any $a, b \in \mathbb{R}^{d}$ in general position, $a, b$ are in the interior of the same maximal cone of $\Delta$ if and only if $\bis^{P}(0, a)$ is equivalent to $\bis^{P}(0, b)$; the latter is equivalent to showing that $a$ and $b$ are simultaneously contained or not contained in $\mathcal{B}_{i^{\sigma _1},j^{\sigma _2}}$ for any choices of $i,j \in [d]$ and $\sigma_1,\sigma _2 \in \{+, -\}$. \\
Any maximal cone in $\Delta$ is uniquely given as the intersection of a maximal cone of $\mathcal{H}$ and a maximal cone $\mathcal{F}_{P}$, that is, a facet cone of $P$. Let $a\in C_1\cap C_{F_{i_1}^{\pi_1}}$ and $b\in C_2\cap C_{F_{i_2}^{\pi_2}}$ where $C_1,C_2$ are maximal cones of $\mathcal{H}$ and $C_{F_{i_1}^{\pi_1}}, C_{F_{i_2}^{\pi_2}}$ are facet cones. Since $C_1$ and $C_2$ are orthants, $a$ and $b$ have the same positive and negative support if and only if $C_1=C_2$. On the other hand, by Proposition~\ref{cube:bis_cone}, this is the case if and only if $a$ and $b$ are simultaneously contained or not contained in $\mathcal{B}_{i^{\sigma _1},j^{\sigma _2}}$ for all $i\neq j$ and arbitrary $\sigma _1,\sigma _2\in \{+,-\}$. Furthermore, for all $i=j$, $\sigma _1\neq \sigma _j$, we have that $a$ and $b$ are simultaneously contained in $\mathcal{B}_{i^{\sigma _1},i^{\sigma _2}}=C_{F_i^{\sigma _1}}$ if and only if $C_{F_i^{\sigma _1}}= C_{F_{i_1}^{\pi _1}}=C_{F_{i_2}^{\pi _2}}$. Lastly, we observe that, if $i=j$ and $\sigma_1=\sigma _2$, then $\mathcal{B}_{i^{\sigma _1},j^{\sigma _2}}$ is a hyperplane and thus neither $a$ nor $b$ are contained in $\mathcal{B}_{i^{\sigma _1},j^{\sigma _2}}$ in this case as $a,b$ are in general position. In summary, in all cases, $a$ and $b$ are simultaneously contained or not contained in $\mathcal{B}_{i^{\sigma _1},j^{\sigma _2}}$ if and only if $C_1=C_2$ and $C_{F_{i_1}^{\pi _1}}=C_{F_{i_2}^{\pi _2}}$, that is, if and only if $a$ and $b$ are contained in the same maximal cone of $\Delta$. This proves the claim.\qedhere
\end{proof}

\begin{figure}[ht]
    \centering
    \includegraphics[scale=0.5]{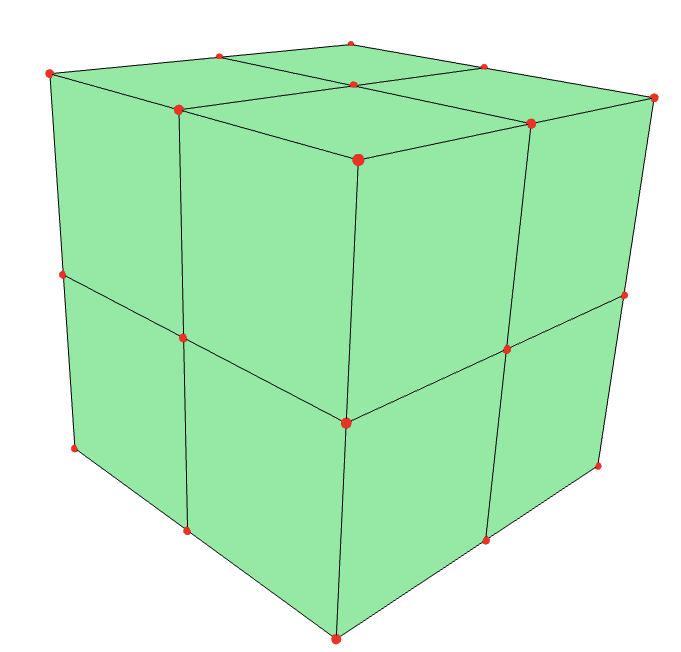}
    \caption{The cube $[-1, 1]^{3}$ intersected with its bisection fan.}
    \label{fig:bisfan_cube}
\end{figure}

    \section{\texorpdfstring{$\ell_{1}$}{l1}-norm}\label{sec:crosspolytope}
    In this section we give an explicit description of the bisection cones and the bisection fan of the $\ell _{1}$-norm
    \[
    \|x\|_1 \ = \ |x_1|+|x_2|+\cdots +|x_d| \, .
    \]
    
    The corresponding unit ball is the $d$-dimensional cross-polytope 
    \[
    P \ = \ \conv\{\pm \, e_{i}: i \in [d]\} \, 
    \]
    with facets
    \[
    F_I \ = \ \conv\left(\{e_i\colon i\in I\}\cup \{-e_i \colon i\in I^c\}\right) \, , \quad I\subseteq [d] \,,
    \]
    where $e_1,\ldots, e_d$ denote the standard basis vectors in $\mathbb{R}^d$ and $I^c$ the complement $[d]\setminus I$.  The corresponding facet cones are given by the orthants
 \[
\mathcal{O}_I \ = \ \left\{x \in \mathbb{R}^{d}: x_{i} \geq 0 \, \, \text{for} \, \, i \in I  \, \,\text{and} \, \, x_{i} \leq 0 \, \, \text{for} \, \, i \in I^{c}\right\} \, .
 \]
 In particular, the face fan of $P$ is induced by the coordinate hyperplanes. With the notation 
\begin{equation*}
    x(I) \coloneqq \sum_{i \in I}x_{i}\,,
\end{equation*}
for all $x\in \mathcal{O}_I$ we have 
\[
 \|x\| _1 \ = \ x(I)-x(I^{c}) \, .
 \]

    We will also use the notation $\bis_{I, J}(0, a)$ for the cell $\bis_{F_{I}, F_{J}}(0, a)$, and $\mathcal{B}_{I,J}$ for the bisection cone $\mathcal{B}_{F_I,F_J} = \{a \in \mathbb{R}^{d}: \bis_{I, J}(0, a)\cap \mathcal{O}_{I} \cap (\mathcal{O}_{J}+a) \neq \emptyset\}$. The following proposition provides an inequality description of $\mathcal{B}_{I,J}$.
    
    \begin{prop}\label{bis_tree}
    For any subsets $I,J\subseteq [d]$ the bisection cone $\mathcal{B}_{I,J}$ is given by the following set of inequalities.
    \begin{itemize}
        \item[(i)] $a_{i} \geq 0 \quad \text{ for all } \, i \in I \cap J^{c} \, .$
        \item[(ii)]$a_{i} \leq 0 \quad \text{ for all }  \, i \in J \cap I^{c} \, .$
       \item[(iii)] $a(J) \leq a(J^{c}).$
    \item[(iv)]$a(I^{c}) \leq a(I).$
    \end{itemize}

    \end{prop}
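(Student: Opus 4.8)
The plan is to verify both inclusions directly, using the conical hull description of $\mathcal{B}_{I,J}$ from Proposition~\ref{raysbfg} together with the geometric characterization coming from Proposition~\ref{projhom}. Recall that a point $a\in\mathbb{R}^d$ lies in $\mathcal{B}_{I,J}$ if and only if there exists $x\in\mathcal{O}_I\cap(\mathcal{O}_J+a)$ with $\|x\|_1=\|x-a\|_1$, where on $\mathcal{O}_I$ we have $\|x\|_1=x(I)-x(I^c)$ and on $\mathcal{O}_J+a$ we have $\|x-a\|_1=(x-a)(J)-(x-a)(J^c)$.

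\textbf{Necessity of (i)--(iv).} First I would show that if $a\in\mathcal{B}_{I,J}$ then all four inequalities hold, by analyzing the feasibility conditions of the point $x$ above. Inequalities (i) and (ii) should follow from the coordinate sign constraints: for $i\in I\cap J^c$ we have $x_i\ge 0$ (from $x\in\mathcal{O}_I$) and $x_i-a_i\le 0$ (from $x-a\in\mathcal{O}_J$), so $a_i\ge x_i\ge 0$; symmetrically for (ii). For (iii) and (iv), I would use the bisector equation $x(I)-x(I^c)=(x-a)(J)-(x-a)(J^c)$. Rearranging this, together with the facts that $x(I)-x(I^c)=\|x\|_1\ge x(J)-x(J^c)$ (since $\|x\|_1$ dominates any signed coordinate sum) and $(x-a)(J)-(x-a)(J^c)=\|x-a\|_1\ge (x-a)(I)-(x-a)(I^c)$, should yield both remaining inequalities after cancellation. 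I expect that combining the equation with the two ``$\ell_1$-norm dominates signed sum'' bounds at the appropriate index sets will cleanly produce (iii) from one choice and (iv) from the other.

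\textbf{Sufficiency: constructing $x$.} Conversely, given $a$ satisfying (i)--(iv), I would construct an explicit witness $x\in\bis_{I,J}(0,a)$, mimicking the strategy used in the proof of Proposition~\ref{cube:bis_cone}. The natural ansatz is to set $x_i$ to depend on whether $i\in I\cap J$, $I\cap J^c$, $I^c\cap J$, or $I^c\cap J^c$, adding a large common ``slack'' term (something like a multiple of $\|a\|_1$) on the coordinates in $I\cap J$ or its complement to force all the orthant inequalities defining $\mathcal{O}_I$ and $\mathcal{O}_J+a$ to hold, while tuning the slack so that the bisector equation $\|x\|_1=\|x-a\|_1$ is satisfied exactly. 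Inequalities (i) and (ii) guarantee the sign conditions on the ``mixed'' coordinates are consistent, and (iii), (iv) should be exactly what is needed to make the large-slack equation solvable with a nonnegative slack. I would then check the five (or rather $2d$) defining inequalities of $x\in\mathcal{O}_I\cap(\mathcal{O}_J+a)$ case by case, and verify the norm equality.

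\textbf{Main obstacle.} The hard part will be the sufficiency direction: choosing the witness $x$ so that \emph{simultaneously} all orthant membership inequalities hold and the bisector equation is met, and confirming that this is possible precisely under (i)--(iv) rather than some stronger condition. Getting the bookkeeping right across the four index classes $I\cap J$, $I\cap J^c$, $I^c\cap J$, $I^c\cap J^c$ — and identifying which class should carry the free slack parameter so that (iii) and (iv) (and not extraneous inequalities) are the feasibility constraints — is the delicate point. An alternative, possibly cleaner route would be to avoid explicit construction entirely: use Proposition~\ref{raysbfg} to write $\mathcal{B}_{I,J}=\cone\{v-u: v\in\vertex(F_I),\,u\in\vertex(F_J)\}$, compute these generators explicitly (they are of the form $\pm e_k\mp e_\ell$ or $\pm e_k$ depending on the class memberships of $k,\ell$), and then show the cone they generate has exactly the facet description (i)--(iv) by a direct polarity/Farkas argument. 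I would attempt the explicit-witness approach first since it parallels the cube case, but keep the generator-based computation in reserve as a cross-check.
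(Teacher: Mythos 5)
Your overall strategy coincides with the paper's: characterize membership in $\mathcal{B}_{I,J}$ by the existence of a witness $x\in\mathcal{O}_I\cap(\mathcal{O}_J+a)$ with $\|x\|_1=\|x-a\|_1$, derive (i)--(iv) from the feasibility constraints, and then construct an explicit $x$ for the converse. Your necessity argument is complete and correct; for (iii) and (iv) it is in fact slightly slicker than the paper's, which instead sums the coordinate inequalities over $I\cap J^c$ and $I^c\cap J$ and combines them with the linearized bisector equation $2\bigl(x(J\cap I^c)-x(I\cap J^c)\bigr)=a(J)-a(J^c)$, whereas you get both inequalities in one line each from the domination $\|y\|_1\ge y(K)-y(K^c)$ applied to $y=x$, $K=J$ and $y=x-a$, $K=I$.

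The gap is in the sufficiency direction, and it is not merely that the witness is left unconstructed: the ansatz you sketch would fail. You propose to place a free slack on the coordinates in $I\cap J$ (or its complement) and tune it to force $\|x\|_1=\|x-a\|_1$. But on $\mathcal{O}_I\cap(\mathcal{O}_J+a)$ that equation reads $x(I)-x(I^c)=(x-a)(J)-(x-a)(J^c)$, and a perturbation of $x_i$ for $i\in I\cap J$ changes both sides by the same amount (likewise for $i\in I^c\cap J^c$), so it cannot be used as a tuning parameter; only the mixed coordinates in $(I\cap J^c)\cup(I^c\cap J)$ affect the difference of the two sides. This is exactly where the paper puts its parameter: it sets $x_i=\lambda a_i$ on $(I\cap J^c)\cup(I^c\cap J)$ with $\lambda=\bigl(a(J)-a(J^c)\bigr)/D$ and $D=a(J)-a(J^c)+a(I^c)-a(I)$, while taking $x_i=\max\{0,a_i\}$ on $I\cap J$ and $x_i=\min\{0,a_i\}$ on $I^c\cap J^c$; conditions (iii) and (iv) are precisely equivalent to $\lambda\in[0,1]$, which gives the orthant memberships, and the degenerate case $D=0$ needs separate (easy) treatment. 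Your fallback route via Proposition~\ref{raysbfg} and a Farkas/polarity computation on the generators $v-u$ would also work in principle, but as stated neither route is carried far enough to close the sufficiency direction.
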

    \begin{proof}
    We first show that any element $a\in \mathcal{B}_{I,J}$ satisfies the above inequalities. For any such $a\in \mathcal{B}_{I,J}$ the intersection $\bis(0,a)\cap \mathcal{O}_I\cap (\mathcal{O}_J+a)$ is non-empty. Thus, by definition, there exists some $x\in \mathbb{R}^d$ such that
    \begin{itemize}
        \item[(a)] $\|x-a\|_1=\|x\|_1$\, ,
        \item[(b)] $x_i\geq 0$ for all  $i\in I$ and $x_i\leq 0$ for all  $i\in I^c$, and
        \item[(c)] $x_i\geq a_i$ for all $i\in J$ and $x_i\leq a_i$  for all $i\in J^c$ \, .
    \end{itemize}
    From (b) and (c) it follows that
    \begin{eqnarray}
   && a_i\geq  x_i \geq  0 \text{ for all }i\in I\cap J^c \, ,\label{eq:1} \\ 
    &&a_i\leq x_i \leq 0 \text{ for all }i\in I^c\cap J\, .\label{eq:2}
    \end{eqnarray}
    In particular, conditions (i) and (ii) are satisfied by $a$. Moreover, we obtain
    \begin{equation}
    a(J \cap I^{c}) - a(I \cap J^{c}) \leq x(J \cap I^{c}) - x(I \cap J^{c}) \leq \ 0 \label{eq:3}
    \end{equation}
    by summing over the relevant ranges for both these sets of inequalities~\eqref{eq:1} and~\eqref{eq:2} and then subtracting the first from the second sum.

    Further, we observe that for all $x$ that satisfy (b) and (c), condition (a) is equivalent to
    \[
    (x-a)(J) - (x-a)(J^{c}) = x(I) - x(I^{c})
    \]
    which can be simplified to 
    \begin{equation}
     2 \left(x(J \cap I^{c}) - x(I \cap J^{c}) \right) \ = a(J) - a(J^{c}) . \label{eq:4}
    \end{equation}
    We observe that the left hand side of~\eqref{eq:3}, multiplied with $2$, can be written as
    \begin{equation*}
   ( a(J)-a(J \cap I) - (a(I)- a(I \cap J))) + (a(I^{c})- a(I^{c} \cap J^{c}) - (a(J^{c}) - a(J^{c} \cap I^{c})))
    \end{equation*}
    which simplifies to
    \begin{equation}
    a(J)-a(J^{c}) + a(I^{c}) - a(I). \label{eq:5}
    \end{equation}
    Together with~\eqref{eq:3} and~\eqref{eq:4} the expression in~\eqref{eq:5} yields
    \[
     a(J)-a(J^{c}) + a(I^{c}) - a(I) \leq a(J)-a(J^{c}) \leq 0.
    \]
    This shows that $a$ also satisfies properties (iii) and (iv).
    
    In order to show that the inequalities (i)-(iv) completely describe the bisection cone $\mathcal{B}_{I,J}$, we will prove that for every point $a$ that satisfies these conditions, we can construct a point $x\in \bis ^{P}(0,a)\cap \mathcal{O}_I\cap (\mathcal{O}_J+a)$.
    
    For every such point $a$ we consider the expression
    \[
    D \ = 2( a(J \cap I^{c}) - a(I \cap J^{c})) = a(J) - a(J^{c})+a(I^{c})-a(I)
    \]
    By conditions (iii) and (iv) we have that $D\leq 0$. We distinguish between the cases $D=0$ and $D<0$.
    
    If $D<0$ we consider
    \[
    \lambda = \frac{1}{D}\left(a(J)-a(J^{c})\right)
    \]
    and define the point $x$ by setting
    
    \[
    x_{i} =  \begin{cases} 
\lambda a_{i}, & i \in (J \cap I^{c}) \cup (I \cap J^{c}) \\ \max\{0, a_{i}\}, & i \in I \cap J \\ \min\{0, a_{i}\} & i \in I^{c} \cap J^{c}
            \end{cases} \, .
\]
By conditions (iii) and (iv), $\lambda$ is contained in the interval $[0,1]$. From that we observe that $x$ is contained in $\mathcal{O}_I\cap (\mathcal{O}_J+a)$. Moreover,
\[x(J \cap I^{c}) - x(I \cap J^{c}) = \lambda \left( a(J \cap I^{c})- a(I \cap J^{c})\right) = \dfrac{1}{2} \left( a(J) - a(J^{c})\right)
\]
which is equivalent to $\|x-a\|_1=\|x\|_1$ for all $x\in \mathcal{O}_I\cap (\mathcal{O}_J+a)$ by equation~\eqref{eq:4} above. Thus, $x$ is contained in $\bis ^{P}(0,a)\cap \mathcal{O}_I\cap (\mathcal{O}_J+a)$.

In case $D=0$ we define $x\in \mathbb{R}^d$ by setting
    \[
    x_{i} =  \begin{cases} 
0, & i \in (J \cap I^{c}) \cup (I \cap J^{c}) \\ \max\{0, a_{i}\}, & i \in I \cap J \\ \min\{0, a_{i}\} & i \in I^{c} \cap J^{c}
            \end{cases} \, .
\]
Again, we observe that $x$ is contained in $\mathcal{O}_I\cap (\mathcal{O}_J+a)$. Further, by conditions (i) and (ii) and $D=0$, we have that $a_i=0$ for all $i\in (J \cap I^{c}) \cup (I \cap J^{c})$. Thus, again equation~\eqref{eq:4} is satisfied and $x$ lies in $\bis ^{P}(0,a)\cap \mathcal{O}_I\cap (\mathcal{O}_J+a)$. This completes the proof.
\end{proof}

Proposition \ref{bis_tree} allows us to determine the bisection fan of the $\ell_{1}$-norm. It is given by the hyperplane arrangement consisting of the hyperplanes parallel to the facets of the cross-polytope together with the coordinate hyperplanes which induce the face fan. For an illustration of the $2$- and $3$-dimensional bisection fans of the $\ell_{1}$-norm, see Figure~\ref{fig:bis_fan_l1_norm}.
\begin{figure}[ht!]
\begin{subfigure}[b]{0.45\textwidth}
\centering
 \begin{tikzpicture}
    \node [
    regular polygon, 
    regular polygon sides=4, 
    minimum size=4 cm, shape border rotate = 45, scale = 1,
    draw=black, thick
] 
at (0,0) (A) {};
\draw node[fill=red,circle,scale=0.45, label = $a_{1}$] at ($(A)+(15:1.25)+(0, 1.4)$) {};
\draw node[fill=red,circle,scale=0.45, label =right: $a_{2}$] at ($(A)+(45:2.0)+(0.3, -0.4)$)  {};
\draw node[fill=red,circle,scale=0.45, label =right: $a_{3}$] at ($(A.corner 4)+(0, -0.8)$) {};
\draw node[fill=red,circle,scale=0.45, label = below:$a_{4}$] at ($(A.corner 3)+(1, 0)$) {};
\draw node[fill=white,scale=0, label = $1$] at ($(A)+(15:1.25)+(-0.35, 0.9)$) {};
\draw node[fill=white,scale=0, label = $2$] at ($(A)+(15:1.25)+(-2.1, 0.9)$) {};
\draw node[fill=white,scale=0, label = $3$] at ($(A)+(15:1.25)+(-2.1, -2.08)$) {};
\draw node[fill=white,scale=0, label = $4$] at ($(A)+(15:1.25)+(-0.35, -2.08)$) {};
\foreach \i in {1, 3}
    \node[circle, label=left:$v_{\i}$] at (A.corner \i) {};
\foreach \i in {2}
    \node[circle, label=below:$v_{\i}$] at (A.corner \i) {};
\foreach \i in {4}
    \node[circle, label=above:$v_{\i}$] at (A.corner \i) {};
\draw [->, dotted, thick] (A.center) -- ($(A.corner 1)-(A.corner 2)$) node[right]  {$v_{1}-v_{2}$};
\draw [->, dotted, thick] (A.center) -- ($0.5*(A.corner 1)-0.5*(A.corner 3)+(0, 1)$) node[right]  {$v_{1}-v_{3}$};
\draw [->, dotted, thick] (A.center) -- ($0.5*(A.corner 4)-0.5*(A.corner 2)+(1, 0)$) node[right]  {$v_{4}-v_{2}$};
\draw [->, dotted, thick] (A.center) -- ($(A.corner 4)-(A.corner 1)$) node[right]  {$v_{4}-v_{1}$};
\draw [->, dotted, thick] (A.center) -- ($0.5*(A.corner 3)-0.5*(A.corner 1)+(0, -1)$) node[right]  {$v_{3}-v_{1}$};
\draw [->, dotted, thick] (A.center) -- ($(A.corner 2)-(A.corner 1)$)  node[right]  {};
\draw [->, dotted, thick] (A.center) -- ($-0.5*(A.corner 4)+0.5*(A.corner 2)-(1, 0)$) node[right]  {};
\draw [->, dotted, thick] (A.center) -- ($(A.corner 1)-(A.corner 4)$) node[right]  {};
\end{tikzpicture}

    \label{fig:bis_fan_cross_2}
\end{subfigure}
~
\hspace{1 cm}
\begin{subfigure}[b]{0.45 \textwidth}
\centering
\includegraphics[scale=0.5]{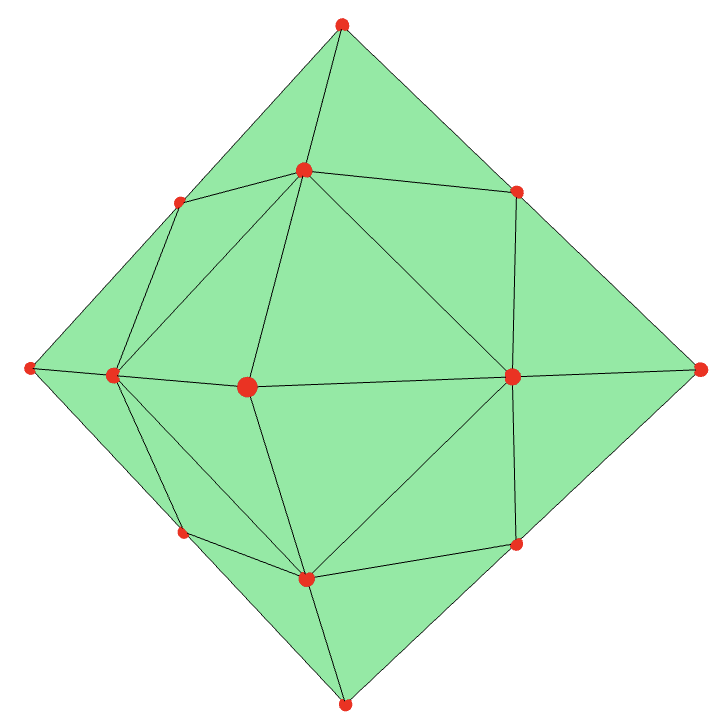}
    \label{fig:bisfan_octa}
\end{subfigure}
\caption{The bisection fans of the $\ell_{1}$-norm in dimensions $2$ and $3$. The points $a_{1}, a_{2}, a_{3}, a_{4}$ lie in different maximal cones of the bisection fan in $2$ dimensions. The effect on the bisectors $\bis(0, a_{i})$ is illustrated in Figure~\ref{fig:4_bisector}.} 
\label{fig:bis_fan_l1_norm}
\end{figure}

\begin{thm}\label{prop:bisfantree2}
The bisection fan of the cross-polytope is the polyhedral fan induced by the hyperplane arrangement \[
\mathcal{H} \ = \ \left\{\left\{x\in \mathbb{R}^d \, \colon \, x_{i} = 0 \right\}_{i =1}^{d} \bigcup \left\{x\in \mathbb{R}^d \, \colon \, x(I) = x(I^{c}) \right\}_{I \subseteq 2^{[d]}}\right\}
\] 
\end{thm}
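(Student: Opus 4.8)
The plan is to read off the bisection fan directly from the inequality description of the bisection cones in Proposition~\ref{bis_tree}. Write $\Delta$ for the polyhedral fan induced by the arrangement $\mathcal{H}$. The crucial observation is that, by Proposition~\ref{bis_tree}, for every pair of facets $F_I,F_J$ the cone $\mathcal{B}_{I,J}$ is cut out by conditions of the three shapes $a_k\ge 0$, $a_k\le 0$ and $a(L)\le a(L^c)$, and the linear forms occurring here --- namely $x_k$ for $k\in[d]$ and $x(L)-x(L^c)$ for $L\subseteq[d]$ --- are precisely the forms defining the hyperplanes of $\mathcal{H}$. Hence on the interior of any maximal cone $C$ of $\Delta$ all of these forms have constant nonzero sign, so for every pair $(I,J)$ we have either $C\subseteq\mathcal{B}_{I,J}$ or $C\cap\mathcal{B}_{I,J}=\emptyset$. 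By~\eqref{e_s} this means $C$ is contained in a single equivalence class $E_S$; since $C$ is open, every point of $C$ then lies in the interior of its equivalence class and is in particular in general position, and any two points in the interior of the same maximal cone of $\Delta$ have equivalent bisectors. This gives the implication (ii)$\Rightarrow$(i) of Definition~\ref{bisfandef}.

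It remains to show that a point $c$ lying on some hyperplane of $\mathcal{H}$ is \emph{not} in general position; together with the previous paragraph this identifies the points in general position with the union of the (open) maximal cones of $\Delta$. If $c$ lies on a facet-parallel hyperplane $\{x:x(I)=x(I^c)\}$, then $c\in\mathcal{B}_{I,I}=\{x:x(I)=x(I^c)\}$ by Proposition~\ref{projhom}, whereas any $c'$ obtained from $c$ by a small perturbation off this hyperplane satisfies $c'\notin\mathcal{B}_{I,I}$; hence $\bis^{P}(0,c)$ and $\bis^{P}(0,c')$ are not equivalent for $c'$ arbitrarily close to $c$. If instead $c$ lies on a coordinate hyperplane $\{x:x_i=0\}$, set $I=\{k:c_k>0\}\cup\{i\}$ and $J=\{k:c_k<0\}$. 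Checking the four inequalities of Proposition~\ref{bis_tree} shows $c\in\mathcal{B}_{I,J}$: conditions (i) and (ii) hold since $I\cap J^c\subseteq\{k:c_k\ge 0\}$ and $J\cap I^c\subseteq\{k:c_k<0\}$, condition (iii) holds since $c(J)\le 0\le c(J^c)$, and condition (iv) holds since $c(I^c)\le 0\le c(I)$. On the other hand $i\in I\cap J^c$, so for every $\eps>0$ the perturbed point $c-\eps e_i$ violates condition~(i) and hence $c-\eps e_i\notin\mathcal{B}_{I,J}$; thus $\bis^{P}(0,c)$ and $\bis^{P}(0,c-\eps e_i)$ are not equivalent and $c$ is not in general position.

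Finally, for the implication (i)$\Rightarrow$(ii) of Definition~\ref{bisfandef}, let $a,b$ be in general position; by the above both lie in the interior of a maximal cone of $\Delta$, and it suffices to show that if these cones are distinct then $\bis^{P}(0,a)$ and $\bis^{P}(0,b)$ are not equivalent. In that case $a$ and $b$ are strictly separated by some hyperplane $H\in\mathcal{H}$. If $H$ is parallel to a facet of the cross-polytope then $a$ and $b$ are separated by a hyperplane of $\mathcal{H}_P$; otherwise $H=\{x:x_i=0\}$, and then $a$ and $b$ have opposite sign in their $i$-th coordinate, hence lie in different orthants, that is, in different maximal cells of the face fan $\mathcal{F}_P$. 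In both cases Proposition~\ref{lem:bis_refine} shows that $\bis^{P}(0,a)$ and $\bis^{P}(0,b)$ are not equivalent. Combined with the first paragraph, this verifies Definition~\ref{bisfandef}, and $\Delta$ is the bisection fan of the cross-polytope.

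I expect the only genuine work to lie in the second paragraph: one must choose the facet pair $(I,J)$ so that $c$ sits inside $\mathcal{B}_{I,J}$ with enough slack in conditions~(ii)--(iv) that the perturbation $c-\eps e_i$ can leave $\mathcal{B}_{I,J}$ only through condition~(i), and one must check that this choice still works in the degenerate situations where $c$ has several vanishing coordinates or where one of $\{k:c_k>0\}$, $\{k:c_k<0\}$ is empty (in particular when $c=0$). The rest is bookkeeping with the sign conditions of Proposition~\ref{bis_tree} together with an appeal to Proposition~\ref{lem:bis_refine}.
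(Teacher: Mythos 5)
Your proposal is correct and follows essentially the same route as the paper: one direction is read off from the inequality description of the bisection cones in Proposition~\ref{bis_tree} (whose defining linear forms are exactly those of $\mathcal{H}$), and the other direction is delegated to Proposition~\ref{lem:bis_refine} after noting that the hyperplanes $x(I)=x(I^c)$ are the facet-parallel ones and the coordinate hyperplanes induce $\mathcal{F}_P$. Your second paragraph, identifying the points in general position with the complement of $\mathcal{H}$ via the explicit pairs $(I,I)$ and $(\{k:c_k>0\}\cup\{i\},\{k:c_k<0\})$, is a correct and welcome elaboration of a step the paper leaves implicit, but it does not change the overall argument.
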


\begin{proof}
To show that $\mathcal{H}$ is the bisection fan of the cross-polytope it suffices to show the following for points $a, b$ in general position in $\mathbb{R}^{d}$: \begin{enumerate}
    \item If $a, b$ lie in the interior of the same region of $\mathcal{H}$, then $\bis(0, a)$ is equivalent to $\bis(0, b).$
    
    \item If $a, b$ are separated by some hyperplane of $\mathcal{H}$, then $\bis(0, a)$ is not equivalent to $\bis(0, b).$
    
\end{enumerate}
For the first point, we observe that the inequalities used to describe the bisection cones in Proposition~\ref{bis_tree} correspond precisely to the hyperplanes in $\mathcal{H}.$ Hence, simultaneous membership of $a, b$ in a region of $\mathcal{H}$ implies simultaneous (non-)membership of $a, b$ in each bisection cone.

For the second point, this follows directly from Proposition~\ref{lem:bis_refine}.
\end{proof}

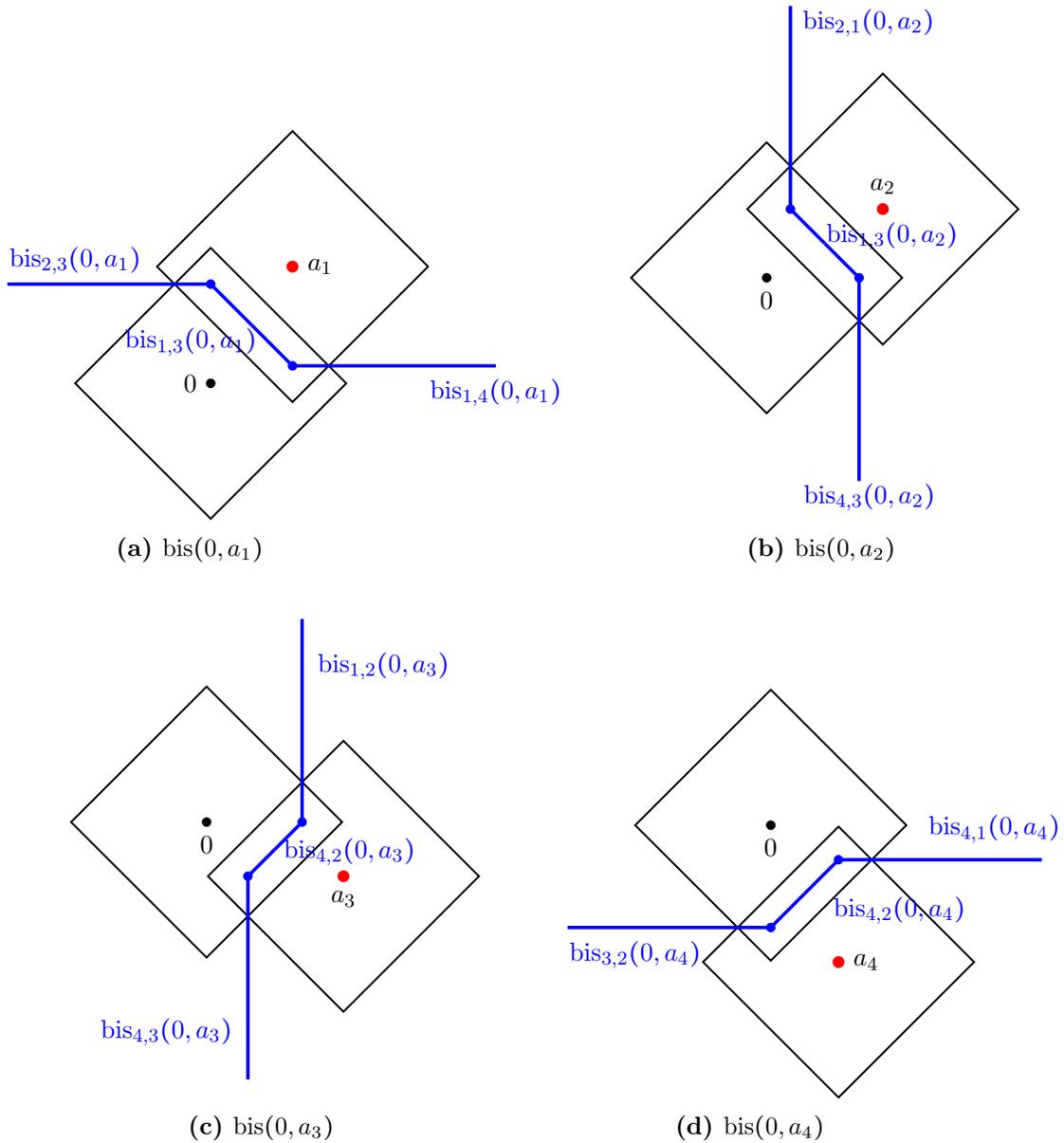
\begin{figure}[b]
        \centering
        \begin{subfigure}[b]{0.35\textwidth}
            \centering
            \scalebox{0.95}{
            \begin{tikzpicture}
\definecolor{vertexcolor_unnamed__1}{rgb}{ 1 0 0 }
 \definecolor{facetcolor_unnamed__1}{rgb}{ 0.4666666667 0.9254901961 0.6196078431 }
    \node [
    regular polygon, 
    regular polygon sides=4, 
    minimum size=4 cm, shape border rotate = 45, scale = 1,
    draw=black, thick
] 
at (0,0) (A){};
\node[dot=4pt, fill=black, label=left:{$0$}] at (A.center) {};
\node [
    regular polygon, 
    regular polygon sides=4, 
    minimum size=4 cm, shape border rotate = 45, scale = 1,
    draw=black, thick
] 
at ($(A)+(15:1.25)+(0, 1.4)$) (B){};
\node[dot=5pt, fill=red, label=right:{$a_{1}$}] at (B.center) {};
\path (A) -- (B) coordinate[midway] (M);
\path[name path=a_to_a_corner1] (A.center)--(A.corner 1); 
\path[name path=m_135] (M) -- ($(M)+(135:1)$);
\fill [blue, name intersections={of=a_to_a_corner1 and m_135}] (intersection-1) circle (2pt);
\draw[blue, thick, line width = 0.5mm] (M) -- (intersection-1);
\draw[blue, thick, line width = 0.5mm] (intersection-1) -- ($(intersection-1)+(-3, 0)$) node[below, label={[xshift=1cm, yshift=0cm]{$\bis_{2,3}(0, a_{1})$}}] {};

\path[name path=b_to_b_corner3] (B.center)--(B.corner 3); 
\path[name path=m_315] (M) -- ($(M)+(-45:2)$);
\fill [blue, name intersections={of=b_to_b_corner3 and m_315}] (intersection-1) circle (2pt);
\draw[blue, thick, line width = 0.5mm] (M) -- (intersection-1) node[below, label={[xshift=-1.5cm, yshift=0.0cm]{$\bis_{1,3}(0, a_{1})$}}] {};
\draw[blue, thick, line width = 0.5mm] (intersection-1) -- ($(intersection-1)+(3, 0)$) node[below, label={[xshift=0cm, yshift=-0.75cm]{$\bis_{1,4}(0, a_{1})$}}] {};
\end{tikzpicture}
}
            \caption[Network2]%
            {{\small $\bis(0, a_{1})$}}    
            \label{fig:mean and std of net14}
        \end{subfigure}
        \hspace{3 cm}
        \begin{subfigure}[b]{0.35\textwidth}  
            \centering 
            \scalebox{0.95}{
            \begin{tikzpicture}
\definecolor{vertexcolor_unnamed__1}{rgb}{ 1 0 0 }
 \definecolor{facetcolor_unnamed__1}{rgb}{ 0.4666666667 0.9254901961 0.6196078431 }
    \node [
    regular polygon, 
    regular polygon sides=4, 
    minimum size=4 cm, shape border rotate = 45, scale = 1,
    draw=black, thick
] 
at (0, 0) (A){};
\node[dot=4pt, fill=black, label=below:{$0$}] at (A.center) {};
\node [
    regular polygon, 
    regular polygon sides=4, 
    minimum size=4 cm, shape border rotate = 45, scale = 1,
    draw=black, thick
] 
at ($(A)+(45:2.0)+(0.3, -0.4)$) (B){};
\node[dot=5pt, fill=red, label=above:{$a_{2}$}] at (B.center) {};
\path (A) -- (B) coordinate[midway] (M);
\path[name path=a_to_a_corner1] (A.center)--(A.corner 4); 
\path[name path=m_135] (M) -- ($(M)+(-45:3)$);
\fill [blue, name intersections={of=a_to_a_corner1 and m_135}] (intersection-1) circle (2pt);
\draw[blue, thick, line width = 0.5mm] (M) -- (intersection-1);
\draw[blue, thick, line width = 0.5mm] (intersection-1) -- ($(intersection-1)+(0, -3)$) node[right, label={[xshift=0cm, yshift=-0.75cm]{$\bis_{4,3}(0, a_{2})$}}] {};

\path[name path=b_to_b_corner3] (B.center)--(B.corner 2); 
\path[name path=m_315] (M) -- ($(M)+(135:2)$);
\fill [blue, name intersections={of=b_to_b_corner3 and m_315}] (intersection-1) circle (2pt);
\draw[blue, thick, line width = 0.5mm] (M) -- (intersection-1) node[below, label={[xshift=1.5cm, yshift=-0.75cm]{$\bis_{1,3}(0, a_{2})$}}] {};
\draw[blue, thick, line width = 0.5mm] (intersection-1) -- ($(intersection-1)+(0, 3)$) node[right, label={[xshift=1cm, yshift=-0.75cm]{$\bis_{2,1}(0, a_{2})$}}] {};
\end{tikzpicture}
}
            
            \caption[]%
            {{\small $\bis(0, a_{2})$}}    
            \label{fig:mean and std of net24}
        \end{subfigure}
        \vskip\baselineskip
        \begin{subfigure}[b]{0.35\textwidth}   
            \centering 
            \scalebox{0.95}{
            \begin{tikzpicture}
\definecolor{vertexcolor_unnamed__1}{rgb}{ 1 0 0 }
 \definecolor{facetcolor_unnamed__1}{rgb}{ 0.4666666667 0.9254901961 0.6196078431 }
    \node [
    regular polygon, 
    regular polygon sides=4, 
    minimum size=4 cm, shape border rotate = 45, scale = 1,
    draw=black, thick
] 
at (0,0) (A){};
\node[dot=4pt, fill=black, label=below:{$0$}] at (A.center) {};
\node [
    regular polygon, 
    regular polygon sides=4, 
    minimum size=4 cm, shape border rotate = 45, scale = 1,
    draw=black, thick
] 
at ($(A.corner 4)+(0, -0.8)$) (B){};
\node[dot=5pt, fill=red, label=below:{$a_{3}$}] at (B.center) {};
\path (A) -- (B) coordinate[midway] (M);
\path[name path=a_to_a_corner1] (A.center)--(A.corner 4); 
\path[name path=m_135] (M) -- ($(M)+(45:3)$);
\fill [blue, name intersections={of=a_to_a_corner1 and m_135}] (intersection-1) circle (2pt);
\draw[blue, thick, line width = 0.5mm] (M) -- (intersection-1);
\draw[blue, thick, line width = 0.5mm] (intersection-1) -- ($(intersection-1)+(0, 3)$) node[above, label={[xshift=1.2cm, yshift=-1.35cm]{$\bis_{1,2}(0, a_{3})$}}] {};

\path[name path=b_to_b_corner3] (B.center)--(B.corner 2); 
\path[name path=m_315] (M) -- ($(M)+(225:2)$);
\fill [blue, name intersections={of=b_to_b_corner3 and m_315}] (intersection-1) circle (2pt);
\draw[blue, thick, line width = 0.5mm] (M) -- (intersection-1) node[above, label={[xshift=1.5cm, yshift=-0.3cm]{$\bis_{4,2}(0, a_{3})$}}] {};
\draw[blue, thick, line width = 0.5mm] (intersection-1) -- ($(intersection-1)+(0, -3)$) node[below, label={[xshift=-1.2cm, yshift=0.35cm]{$\bis_{4,3}(0, a_{3})$}}] {};
\end{tikzpicture}
}
           
            \caption[]%
            {{\small $\bis(0, a_{3})$}}    
            \label{fig:mean and std of net34}
        \end{subfigure}
        \hspace{1 cm}
            \vspace{1 cm}
        \begin{subfigure}[b]{0.35\textwidth}   
            \centering 
            \scalebox{0.95}{
\begin{tikzpicture}
\definecolor{vertexcolor_unnamed__1}{rgb}{ 1 0 0 }
 \definecolor{facetcolor_unnamed__1}{rgb}{ 0.4666666667 0.9254901961 0.6196078431 }
    \node [
    regular polygon, 
    regular polygon sides=4, 
    minimum size=4 cm, shape border rotate = 45, scale = 1,
    draw=black, thick
] 
at (0,0) (A){};
\node[dot=4pt, fill=black, label=below:{$0$}] at (A.center) {};
\node [
    regular polygon, 
    regular polygon sides=4, 
    minimum size=4 cm, shape border rotate = 45, scale = 1,
    draw=black, thick
] 
at ($(A.corner 3)+(1, 0)$) (B){};
\node[dot=5pt, fill=red, label=right:{$a_{4}$}] at (B.center) {};
\path (A) -- (B) coordinate[midway] (M);
\path[name path=a_to_a_corner1] (A.center)--(A.corner 3); 
\path[name path=m_135] (M) -- ($(M)+(225:3)$);
\fill [blue, name intersections={of=a_to_a_corner1 and m_135}] (intersection-1) circle (2pt);
\draw[blue, thick, line width = 0.5mm] (M) -- (intersection-1);
\draw[blue, thick, line width = 0.5mm] (intersection-1) -- ($(intersection-1)+(-3, 0)$) node[below, label={[xshift=1cm, yshift=-0.75cm]{$\bis_{3,2}(0, a_{4})$}}] {};

\path[name path=b_to_b_corner3] (B.center)--(B.corner 1); 
\path[name path=m_315] (M) -- ($(M)+(45:2)$);
\fill [blue, name intersections={of=b_to_b_corner3 and m_315}] (intersection-1) circle (2pt);
\draw[blue, thick, line width = 0.5mm] (M) -- (intersection-1) node[above, label={[xshift=0.9cm, yshift=-1.4cm]{$\bis_{4,2}(0, a_{4})$}}] {};
\draw[blue, thick, line width = 0.5mm] (intersection-1) -- ($(intersection-1)+(3, 0)$) node[above, label={[xshift=-0.7cm, yshift=-0.2cm]{$\bis_{4,1}(0, a_{4})$}}] {};
\end{tikzpicture}
}
            
            \caption[]%
            {{\small $\bis(0, a_{4})$}}    
            \label{fig:mean and std of net44}
        \end{subfigure}
        \caption[.]
        {\small The bisector $\bis(0, a_{i})$ for $i=1, \ldots ,4$ where $a_{1},a_{2},a_{3},a_{4}$ are $4$ points lying in different maximal cones of the bisection fan; see Figure~\ref{fig:bis_fan_l1_norm}. No pair of bisectors has the same labelled maximal cells. This shows that the bisectors are pairwise non-equivalent.}
        \label{fig:4_bisector}
    \end{figure} 

\section{Discrete Wasserstein norm}\label{sec:Wasserstein}

In this section we study bisectors of the discrete Wasserstein distance $\dist^{P}(0, a)$ defined on the hyperplane $
H:=\left\{x\in \mathbb{R}^{d} \, \colon \, x([d]) = 0\right\}$ by \[
\max \, \, \inner{a}{x} \quad \text{subject to} \quad |x_{i}-x_{j}| \leq 1 \, \, \text{for $1 \leq i < j \leq d.$}
\] 

As in the previous section, we use the notation $
x(I) \coloneqq \sum_{i 
\in I}x_{i}$. The corresponding unit ball is the polytope 
\[
P \ = \ \conv \{\pm (e_i-e_j) \colon 1 \leq i < j \leq d\} \, ,
\]

 the root polytope of the lattice $A_d$  which can equivalently be seen as the symmetric edge polytope of the complete graph on $d$ vertices, see, e.g., \cite{ardila2011root,symedgepol}.

The polytope has dimension $d-1.$ Its face fan is induced by the intersection of $H$ with the coordinate hyperplanes in $\mathbb{R}^d$. In particular, the facets of $P$ are in one-to-one correspondence with proper subsets $\emptyset \neq I \subsetneq [d]$ (See, e.g., \cite[Corollary 3.3]{symedgepol}). More precisely, for every proper subset $I$, the set $F_I:=\conv \{e_i -e_j \colon i\in I, j\not \in I\}$ is a facet of $P$. The cone over the facet $F_I$ is equal to
\[
C_I \ := \ C_{F_I} \ = \ H\cap \mathcal{O}_{I} \, ,
\]
where $\mathcal{O}_{I}=\{x \in \mathbb{R}^{d} \, : \, x_{i} \geq 0 \text{ for all } i\in I \, \, \text{and} \, \, x_{i} \leq 0 \, \, \text{for all} \,\, i \in I^{c}\}$. For any $x=\sum \lambda _{ij} (e_i-e_j) \in C_I$, $\lambda _{i,j}\geq 0$, we have by linearity
 \[
 \dist \nolimits^{P}(0,x) \ = \ \sum_{(i, j) \in I \times I^{c}} \lambda_{i, j} \ = \ \sum_{i\in I}x_i \ = x(I) =\ \frac{\|x\|_1}{2} \, .
 \]
See also~\cite[Corollary 8]{ccelik2021wasserstein}. Note that the last equality above follows from the fact that $x(I) = -x(I^{c})$; we will use this repeatedly hereafter.

The inequality description of the bisection cones, denoted $\mathcal{B}_{I, J}=\mathcal{B}_{F_I, F_J}$ for brevity, is given by the following proposition.

\newpage

\begin{prop}\label{biscone_kd}
For proper subsets $\emptyset \neq I, J \subsetneq [d]$, the bisection cone $\mathcal{B}_{I, J}$ consists of all $a \in H$ satisfying the following inequalities. 

    \begin{minipage}{0.45\textwidth}
    \begin{itemize}
        \item[(I)] If $I \cap J \neq \emptyset$ and $I^c \cap J^c \neq \emptyset$ then 
        \begin{enumerate}[(i)]
            \item $a_{i} \geq 0 \quad \forall \, i \in I \cap J^c$

            \item $a_{i} \leq 0 \quad \forall \, i \in J \cap I^c$

            \item $ a(I) \geq 0$

            \item $a(J) \leq 0$
        \end{enumerate}
        \item[(II)]If $I \cap J = \emptyset$ and $I^c \cap J^c \neq \emptyset$ then
        \begin{enumerate}[(i)]
            \item $a_{i} \geq 0 \quad \forall \, i \in I$
            \item $a_{i} \leq 0 \quad \forall \, i \in J$
            \item $a(J) \leq a(K) \quad \forall \, K \subseteq I^c \cap J^c$
            \item $a(K) \leq a(I) \quad \forall \, K \subseteq I^c \cap J^c$
        \end{enumerate}
        \end{itemize}
        \end{minipage}
        \begin{minipage}{0.45\textwidth}
        \vspace{0.38cm}
        \begin{itemize}
    \item[(III)] If $I \cap J \neq \emptyset$ and $I^c \cap J^c = \emptyset$ then
        \begin{enumerate}[(i)]
            \item $a_{i} \geq 0 \quad \forall \, i \in J^c$
            \item $a_{i} \leq 0 \quad \forall \, i \in I^c$
            \item $a(K) \leq a(J^{c}) \quad \forall \, K \subseteq I \cap J$
            \item $a(I^{c}) \leq a(K) \quad \forall \, K \subseteq I \cap J$
        \end{enumerate}
    \item[(IV)]If $I \cap J = \emptyset$ and $I^c \cap J^c = \emptyset$ then
        \begin{enumerate}[(i)]
        \item $a_{i} \geq 0 \quad \forall \, i \in I $
        \item $a_{i} \leq 0 \quad \forall \, i \in J $
        
        \end{enumerate}
        \end{itemize}
        \vspace{1.25cm}
        \end{minipage}

\end{prop}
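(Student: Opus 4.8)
The plan is to translate membership in $\mathcal{B}_{I,J}$ into the solvability of an explicit linear feasibility problem for an auxiliary point $x$, and then to solve that problem according to which of the two sets $I\cap J$ and $I^c\cap J^c$ is empty. To set this up: since $\dist^P(0,\cdot)$ is translation invariant and $P=-P$, the condition $x\in\bis^P(0,a)$ is equivalent to $\dist^P(0,x)=\dist^P(0,x-a)$; and since $\dist^P(0,y)=y(I)$ for $y\in C_I$ and $\dist^P(0,y)=y(J)$ for $y\in C_J$, a point $a$ lies in $\mathcal{B}_{I,J}$ exactly when there is an $x$ with $x\in C_I$, $x-a\in C_J$, and $x(I)=(x-a)(J)$ (in particular $a=x-(x-a)\in H$, so $\mathcal{B}_{I,J}\subseteq H$ automatically). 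Writing $C_I=H\cap\mathcal{O}_I$, spelling out the sign conditions on each of the four blocks $I\cap J$, $I\setminus J$, $J\setminus I$, $I^c\cap J^c$, and simplifying $x(I)=(x-a)(J)$, this becomes: there is $x\in\mathbb{R}^d$ with $x([d])=0$; $x_i\ge\max(0,a_i)$ for $i\in I\cap J$; $0\le x_i\le a_i$ for $i\in I\setminus J$; $a_i\le x_i\le 0$ for $i\in J\setminus I$; $x_i\le\min(0,a_i)$ for $i\in I^c\cap J^c$; and $x(J\setminus I)-x(I\setminus J)=a(J)$.

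From this it is immediate that $a_i\ge 0$ on $I\setminus J$ and $a_i\le 0$ on $J\setminus I$ are necessary in all four regimes --- these are precisely conditions (i) and (ii) once the blocks are named in each case --- and, granting them, feasibility of $x$ depends only on whether the four block-sums $x(I\cap J)$, $x(I\setminus J)$, $x(J\setminus I)$, $x(I^c\cap J^c)$ can be chosen in their respective ranges subject to the two linear relations $x([d])=0$ and $x(J\setminus I)-x(I\setminus J)=a(J)$; here I use that, over a nonempty index set, the set of attainable sums of coordinates confined to prescribed (possibly one-sided) intervals is the Minkowski sum of those intervals, hence again an interval. The four ranges are $\big[\sum_{i\in I\cap J}\max(0,a_i),\,\infty\big)$, $[\,0,\,a(I\setminus J)\,]$, $[\,a(J\setminus I),\,0\,]$, and $\big(-\infty,\,\sum_{i\in I^c\cap J^c}\min(0,a_i)\,\big]$ (each degenerating to $\{0\}$ when the block is empty), and I will use $\sum_{i\in D}\max(0,a_i)=\max_{K\subseteq D}a(K)$ and $\sum_{i\in D}\min(0,a_i)=\min_{K\subseteq D}a(K)$ with $D=I^c\cap J^c$ to convert the scalar bounds that arise into the ``for all $K\subseteq I^c\cap J^c$'' form appearing in (iii) and (iv). Case (IV), where $I\cap J=\emptyset=I^c\cap J^c$, needs no computation: then $J=I^c$, so $F_J=-F_I$ and $\mathcal{B}_{I,J}=C_I$ by Proposition~\ref{projhom}, i.e. exactly conditions (i) and (ii).

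For Case (I) ($I\cap J\neq\emptyset$ and $I^c\cap J^c\neq\emptyset$), the sum $x(I\cap J)+x(I^c\cap J^c)$ can equal any real number, so $x([d])=0$ imposes nothing beyond feasibility of the other two blocks, and one only needs $u\in[0,a(I\setminus J)]$ and $v\in[a(J\setminus I),0]$ with $v-u=a(J)$; the attainable values of $v-u$ form $[\,a(J\setminus I)-a(I\setminus J),\,0\,]$, and $a(J)$ lies in it iff $a(J)\le 0$ (this is (iv)) and $a(J)\ge a(J\setminus I)-a(I\setminus J)$, which rearranges via $a(J)-a(J\setminus I)=a(I\cap J)$ to $a(I)\ge 0$ (this is (iii)). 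For Case (II) ($I\cap J=\emptyset$, $I^c\cap J^c\neq\emptyset$) one has $I=I\setminus J$ and $J=J\setminus I$; putting $t=x(I\setminus J)$ and $x(J\setminus I)=t+a(J)$, the bounds force $t\in[0,\min(a(I),-a(J))]$, and solving $x([d])=0$ for $x(I^c\cap J^c)=-(2t+a(J))$ and imposing the one-sided bound gives also $t\ge\tfrac12\big(-a(J)-\min_{K\subseteq D}a(K)\big)$; such a $t$ exists iff $\min_{K\subseteq D}a(K)\ge a(J)$ (this is (iii)) and $\max_{K\subseteq D}a(K)\le a(I)$ (this is (iv), using $a\in H$ to rewrite $2a(I)+a(J)$ as $a(I)-a(I^c\cap J^c)$). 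Case (III) ($I\cap J\neq\emptyset$, $I^c\cap J^c=\emptyset$) I would deduce from Case (II) applied to $(I^c,J^c)$ together with $\mathcal{B}_{I,J}=-\mathcal{B}_{I^c,J^c}$ --- which holds by Corollary~\ref{bfg_swap} and $-F_K=F_{K^c}$ --- since passing to $-a$ swaps (i)$\leftrightarrow$(ii) and (iii)$\leftrightarrow$(iv) in exactly the stated manner.

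Finally, for sufficiency I would check that, when $a$ satisfies the listed inequalities, the relevant block-sum intervals are nonempty and a compatible choice of block-sums is realizable coordinate by coordinate, proceeding as in the proof of Proposition~\ref{bis_tree}: choose the free block-sum (e.g.\ $x(I\setminus J)$ in Cases (I)--(II)) in its now-nonempty range, distribute it over the block (possible because that range is the Minkowski sum of the coordinate intervals), and the resulting $x$ witnesses $a\in\mathcal{B}_{I,J}$. The hard part is purely organizational: matching the solvability criterion of the two-equation system in the four block-sums to the precise inequalities (i)--(iv) in each of the three nontrivial regimes, with repeated appeals to $a\in H$ and to the identities relating $\sum_{i\in D}\max(0,a_i)$ and $\sum_{i\in D}\min(0,a_i)$ to $\max_{K\subseteq D}a(K)$ and $\min_{K\subseteq D}a(K)$; no individual step is deep, but the sign bookkeeping is where errors would creep in.
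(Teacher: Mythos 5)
Your proposal is correct, and it reaches the same characterization from the same starting point as the paper --- namely, unwinding $a\in\mathcal{B}_{I,J}$ into the existence of an $x$ with $x([d])=0$, the sign constraints $x\in\mathcal{O}_I$, $x-a\in\mathcal{O}_J$, and the balance equation $x(J\setminus I)-x(I\setminus J)=a(J)$ --- but the way you then dispose of the feasibility question is genuinely different in organization. The paper treats necessity and sufficiency separately: necessity by ad hoc summation and rearrangement of the coordinate inequalities, and sufficiency by exhibiting explicit witnesses $x$ with tuning parameters $\lambda$, $\mu$ and further sub-cases ($D=0$ versus $D>0$ in Cases (I) and (IV); three sub-cases $(\alpha),(\beta),(\gamma)$ in Case (II)). You instead project the problem onto the four block sums $x(I\cap J)$, $x(I\setminus J)$, $x(J\setminus I)$, $x(I^c\cap J^c)$, note that each ranges exactly over the Minkowski sum of its coordinate intervals, and solve the resulting two-equation system by interval arithmetic; this yields both directions of the equivalence in one pass and replaces the explicit constructions by the (standard, but worth one line) surjectivity of the sum map onto the Minkowski interval. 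Your key identifications check out: in Case (I) the unboundedness of the $I\cap J$ and $I^c\cap J^c$ blocks absorbs the constraint $x([d])=0$, and $a(J)\in[a(J\setminus I)-a(I\setminus J),\,0]$ rearranges to (iii)--(iv) via $a(J)-a(J\setminus I)=a(I\cap J)$; in Case (II) the bound $t\ge\tfrac12(-a(J)-\min_K a(K))$ compared against $a(I)$ and $-a(J)$ gives exactly (iv) and (iii) after using $a\in H$ and $\sum_{i\in D}\min(0,a_i)=\min_{K\subseteq D}a(K)$; Case (III) by the sign flip $\mathcal{B}_{I,J}=-\mathcal{B}_{I^c,J^c}$ matches the paper; and your dispatch of Case (IV) via Proposition~\ref{projhom} ($\mathcal{B}_{F,-F}=C_F$) is cleaner than the paper's direct computation. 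What your route buys is a uniform, check-once argument with the sign bookkeeping concentrated in a few interval comparisons; what the paper's buys is explicit points in $\bis_{F_I,F_J}(0,a)$, which it does not reuse later but which make each sufficiency claim self-verifying.
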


\begin{proof}
\textsc{Necessity:}\\
We begin by proving that any $a\in \mathcal{B}_{I, J}$ necessarily satisfies the conditions above. By definition, if $a\in \mathcal{B}_{I, J}$ then $\bis (0,a)\cap \mathcal{O}_I\cap (\mathcal{O}_J+a)$ is non-empty. That is, there exists an $x\in \mathbb{R}^d$ such that
\begin{itemize}
\item[(a)]$x([d])=0$,
\item[(b)] $x(I)=x(J) -a(J)$,
\item[(c)] $x_i\geq 0$ for all $i\in I$ and $x_i\leq 0$ for all $i\in I^c$,
\item[(d)] $x_i\geq a_i$ for all $i\in J$ and $x_i\leq a_i$ for all $i\in J^c$.
\end{itemize}

From (c) and (d) it follows that 
\begin{eqnarray}
    a_{i} \geq  x_i\geq 0 && \text{ for all } i \in I \cap J^{c} \text{ , and } \label{eq:ineq1}\\ 
    a_{i} \leq  x_i\leq 0 &&\text{ for all } i \in J \cap I^{c} \, . \label{eq:ineq2}
\end{eqnarray}

In particular, these inequalities imply $a_{i} \geq 0$ for all $i \in I \cap J^{c}$ and $a_{i} \leq 0$ for all $i \in J \cap I^{c}$. These inequalities are exactly recorded in condition (i) and (ii) in all four cases, adjusted to account for whether $I\cap J$ and $I^c\cap J^c$ are empty or not.

To prove the necessity of the remaining conditions we proceed by a case-by-case distinction.

Case (I): Summing the inequalities~\eqref{eq:ineq1} and \eqref{eq:ineq2} over the elements in $I\cap J^c$ and $I^c\cap J$ we obtain
\[0 \leq x(I \cap J^{c}) \leq a(I \cap J^{c}) \quad \text{and} \quad 0 \leq  - x(J \cap I^{c}) \leq - a(J \cap I^{c}) \, .
\]
After summing these two inequalities and using (b), we get
\[
0 \leq  x(I \cap J^{c}) - x(J \cap I^{c})  = - a(J) \leq a(I \cap J^{c}) - a(J \cap I^{c}) = a(I) - a(J) \, .
\]
From here, it follows that $a(I) \geq 0$ and $a(J) \leq 0$, which are conditions (iii) and (iv) respectively.

Case (II): From (c) and (d) it follows that $x_i\leq \min \{0,a_i\}$ for all $i\in I^c\cap J^c$. Thus, for all $K\subseteq I^c\cap J^c$, we have
\begin{equation}\label{kd2}
x(I^{c} \cap J^{c}) \leq a(K) \, .
\end{equation}
From $I\cap J=\emptyset$ and (a) we obtain
\begin{align}
 x(I)+x(J)&=  -x(I^{c} \cap J^{c}) \, . \label{biseqn}
        \end{align}
        Subtracting (b) from equation~\eqref{biseqn} we get
        \begin{align}
        2\cdot x(J)&= -x(I^{c} \cap J^{c})+a(J) \label{biseqn2} \, .
        \end{align}
        Since $J\subseteq I^c$, from (c) it follows that  $x(J) \leq 0$. This together with equations~\eqref{biseqn2} and~\eqref{kd2} yields condition (iii):
        \[
a(J) \leq x(I^{c} \cap J^{c}) \leq a(K) \, .
        \]
        On the other hand, adding (b) to equation~\eqref{biseqn} we get
        \begin{align}
        2\cdot x(I)&= -x(I^{c} \cap J^{c} ) - a(J) \label{biseqn3} \, .
        \end{align}
        Since $I\subseteq J^c$, from (c) it follows that  $x(I) \leq a(I)$. This together with equation~\eqref{biseqn3} and~\eqref{kd2} applied to $K'=I^c\cap J^c\cap K^c$ yields
        \[
        2\cdot a(I)\geq 2\cdot x(I)= -x(I^{c} \cap J^{c})- a(J)\geq - a(I^c \cap J^c\cap K^{c})- a(J)
        \]
        which after rearrangement gives
                \[
        a(I)\geq -a(I^c \cap J^c\cap K^c)- a(J)-a(I)=a(K),
        \]
        since $a\in H$. This shows that condition (iv) is satisfied.

         Cases (III) and (IV): We observe that $F_I=-F_{I^c}$ and thus  $\mathcal{B}_{I, J}=-\mathcal{B}_{I^c,J^c}$ for all proper subsets $I$ and $J$ of $[d]$. Thus, Case (III) follows from Case (II) by replacing $I$ with $I^c$ and $J$ with $J^c$. Case (IV) follows from equations~(\ref{eq:ineq1}) and~(\ref{eq:ineq2}) above.

\textsc{Sufficiency:}\\
In order to prove sufficiency of the conditions we construct an $x\in \bis (0,a)\cap \mathcal{O}_I\cap (\mathcal{O}_J+a)$ for any $a\in H$ that satisfies the conditions above. Again, we proceed by a case-by-case distinction.

Case (I): For any $a\in H$ that satisfies the conditions (i)-(iv) we consider the quantity
\[ 
D = a(I \cap J^{c}) - a(J \cap I^{c}) \, .
\]
Then $D \geq 0$, by conditions (i) and (ii). Further, we define 
\[
\lambda = \begin{cases}\frac{1}{D}\left(-a(J)\right) & \text{ if }D>0 \, , \\
0 & \text{ if } D=0 \, .
\end{cases}
\]
Then we consider $x \in \mathbb{R}^{d}$ defined by  \[
    x_{i} =  \begin{cases} 
\lambda a_{i} & i \in (J \cap I^{c}) \cup (I \cap J^{c}), \\ b_{i} & i \in I \cap J, \\ c_{i} & i \in I^{c} \cap J^{c} \, ,
\end{cases} 
\] 

 where $b_{i} \geq \max\{0, a_{i}\}$ for $i \in I \cap J$ and $c_{i} \leq \min\{0, a_{i}\}$ for $i \in I^{c} \cap J^{c}$ are chosen such that \begin{equation}\label{maxmin}
 \sum_{i \in I \cap J}b_{i} + \sum_{i \in I^{c} \cap J^{c}}c_{i} = -\lambda \left(\sum_{i \in I \cap J^{c}}a_{i} + \sum_{i \in J \cap I^{c}}a_{i} \right),
 \end{equation}
 
 in order for condition (a) to be satisfied. Such a choice of $b_i$s and $c_i$s is always possible by the non-emptiness of $I \cap J$ and $I^{c} \cap J^{c}$, and the opposite sign of $b_{i}$ and $c_{i}.$ By definition, any such $x$ lies in $H$, that is, satisfies condition (a). Since $a$ satisfies conditions (iii) and (iv) it follows that $\lambda \in [0,1]$. From that and since $a$ satisfies conditions (i) and (ii), $x$ lies in $x \in \mathcal{O}_{I} \cap (\mathcal{O}_{J}+a)$. That is, $x$ satisfies conditions (c) and (d), in both cases $D=0$ and $D>0$. To see that condition (b) is also satisfied by $x$ we distinguish between the cases $D>0$ and $D=0$.

If $D>0$ then by definition of $x$, $D$ and $\lambda$ we have
\begin{equation}\label{eq:bequiv}
x(I \cap J^{c}) - x(J \cap I^{c}) = \lambda (a(I \cap J^{c}) - a(J \cap I^{c})) = \lambda D = - a(J)
\end{equation}
which is precisely condition (b). 

In the other case, if $D=0$, then since $a$ satisfies conditions (i) and (ii), it follows that $a_{i}=0$ for all $i \in (I \cap J^{c}) \cup (J \cap I^{c}).$ Since $a$ satisfies conditions (iii) and (iv) we therefore also have the following chain of inequalities: 
\[
0 \geq a(J) = a(J \cap I) = a(I) \geq 0 \, .
\]
In particular, $a(J) = 0$ and therefore both sides of equation~\eqref{eq:bequiv} are equal to $0$. That is, condition (b) is satisfied by $x$. In summary, $x$ satisfies conditions (a)-(d) and therefore lies in $\bis (0,a)\cap \mathcal{O}_I\cap (\mathcal{O}_J+a)$ as required.

Case (II): In order to prove sufficiency in this case we distinguish between three further cases: 

($\alpha$) $a(I)=0$: In this case, by condition (i), we have $a_i=0$ for all $i\in I$. Further, by conditions (ii) and (iv) (applied to $K = I^{c} \cap J^{c}$) we have the chain of inequalities
\[
 0 \geq a(J) = - a(J^{c}) = -a(J^{c} \cap I) -a(J^{c} \cap I^{c}) \geq -a(I)=0 \, .
 \]
 In particular, we must also have $a(J)=0$, and thus, together with condition (ii) we must have $a_i=0$ for all $i\in J$. Now, setting $K=\{i\}$, $i\in J^{c} \cap I^c$, it follows from conditions (iii) and (iv) that we also have $a_i=0$ for all $i\in I^c\cap J^c$. In summary, if $a(I)=0$ then $a=0$. In this case, clearly $x=0$ satisfies the conditions (a)-(d).

 ($\beta$) $a(I) > 0$ and $-a(J) \leq a(I)$ : In this case we consider
 \[
\lambda = \frac{-a(J)}{a(I)}
 \]
 which, by assumption and conditions (i) and (ii), lies in the interval $[0,1]$. We then define
 \[
    x_{i} =  \begin{cases} 
0 & i \in J, \\ \lambda a_{i} & i \in I,  \\ c_{i} & i \in I^{c} \cap J^{c},
            \end{cases}
            \]
            
where $c_{i} \leq \min\{0, a_{i}\}$ for each $i \in I^{c} \cap J^{c}$ are chosen such that $\sum_{i \in I^{c} \cap J^{c}}c_{i} = a(J)$. This is always possible. To see that, we consider $\tilde{c}_i=\min\{0, a_{i}\}$. Then, by condition (iii), $\sum _{i\in I^c\cap J^c}\tilde{c}_i\geq a(J)$. If the inequality is strict, then we may choose an $i_0\in I^c\cap J^c$ (which is non-empty) and set \[c_{i}=\begin{cases} \tilde{c}_i & i\in (I^c\cap J^c)\setminus \{i_0\}, \\
a(J)-\sum \limits_{i\in (I^c\cap J^c) \setminus \{i_{0}\}}\tilde{c}_i &i = i_{0}.
\end{cases}
\]
By construction, $x$ satisfies condition (a). Further, since $0\leq \lambda \leq 1$ we see that also conditions (c) and (d) are satisfied by $x$. And it also satisfies condition (b) since 
\[x(I) - x(J) = \lambda a(I) = -a(J) \, .
\]

($\gamma$) $a(I) > 0$ and $-a(J) > a(I)$: Then we have $0 < a(I)<-a(J)$, in particular, we may define
\[
\mu = \frac{a(I)+a(J)}{a(J)},
\]
which by assumption lies in the interval $(0,1]$. We then consider
\[x_{i} =  \begin{cases} 
a_{i} & i \in I, \\ \mu a_{i} & i \in J,  \\ c_{i} & i \in I^{c} \cap J^{c}.
            \end{cases} \, 
            \]
where $c_{i} \leq \min\{0, a_{i}\}$ for each $i \in I^{c} \cap J^{c}$ are chosen such that $\sum_{i \in I^{c} \cap J^{c}}c_{i} = -a(I) +a(I^{c} \cap J^{c})$. Condition (iv) ensures that such a choice is always possible: as in case ($\beta$) we can initially set $
        \tilde{c_{i}} = \min\{0, a_{i}\}$ for $i \in I^{c}\cap J^{c}$. By condition (iv), for any subset $K\subseteq I^{c}\cap J^{c}$, $-a(I) \leq - \sum_{i \in (I^{c} \cap J^{c})\setminus K}a_{i}$ which is equivalent to $-a(I) +a(I^{c} \cap J^{c}) \leq a(K)$. In particular,
        \[
        \sum _{i\in I^{c} \cap J^{c}}\tilde{c}_i\geq -a(I) +a(I^{c} \cap J^{c}) \, . 
        \]
        We then define $c_i$ similarly as in ($\beta$) by
        \[c_{i}=\begin{cases} \tilde{c}_i & i\in (I^c\cap J^c)\setminus \{i_0\}, \\
-a(I) +a(I^{c} \cap J^{c})-\sum \limits_{i\in (I^c\cap J^c) \setminus \{i_{0}\}}\tilde{c}_i &i = i_{0}.
\end{cases}
\]
By construction, $x$ then satisfies condition (a), and since $0 < \mu \leq 1$ we observe that $x$ also satisfies conditions (c) and (d). Also condition (b) is satisfied by $x$ since
\[
x(I) - x(J) = a(I) - \mu a(J) = a(I) - \left(a(I) + a(J)\right) = -a(J) \, .
\]
In summary, $x$ satisfies conditions (a)-(d), as desired.

Case (III): Follows from Case (II) as argued above in the proof of the necessity of the conditions.

Case (IV): Similarly as in Case (I) we consider the quantity $D=a(I) - a(J)$. If $D=0$ then, by conditions (i) and (ii), we have $a_i=0$ for all $i\in I\cup J=[d]$. In this case, $x=0$ clearly satisfies all conditions (a)-(d). If $D > 0$ we define $\lambda =1/D \cdot (-a(J))$ and set $x=\lambda a$. Then $x$ satisfies condition (a). By conditions (i) and (ii) and the sign of $D$, $\lambda \in [0,1]$ and thus we see that conditions (c) and (d) are also satisfied. Lastly, condition (b) is also satisfied since 
\[
x(I) - x(J) = \lambda \left(a(I) - a(J)\right) = -a(J)\, .
\]
Again, we have constructed an $x$ with the desired properties. This finishes the proof.
\end{proof}

We will now use Proposition~\ref{biscone_kd} to give a combinatorial and a geometric description of the equivalence classes of bisectors with respect to the Wasserstein distance thereby showing that the bisection fan exists. To that end, we begin by associating to every point $a\in H=\{x\in \mathbb{R}^d: x([d]) = 0\}$ the following sets and set systems.
\begin{align*}
I_{a}^{+} &= \{i \in [d]: a_{i}>0\} &&\textbf{positive support}\\
I_{a}^{-} &= \{i \in [d]: a_{i}<0\} &&\textbf{negative support}\\
S_{a}^{+} &= \{K \subset [d]: a(K) > 0\} &&\textbf{positive sums}\\
S_{a}^{-} &= \{K \subset [d]: a(K) < 0\} &&\textbf{negative sums}
 \end{align*}

  We may identify $I_a^+$ and $I_a^-$ with the one-element subsets in $S_a^+$ and $S_a^-$, respectively. Further, we consider the system of \textbf{light/heavy positive/negative sums} as defined in the following table.

 \begin{center}
\begin{tabular}{ |p{1.5cm}|p{6cm}|p{6cm}|}
\hline
& \textbf{Positive} & \textbf{Negative}  \\
\hline
\textbf{Light} & $I_{a,<}^{+}=\{X\subseteq I_{a}^{+}\colon a(X)<a(I_{a}^{+}\setminus X)\}$ & $I_{a,<}^{-}=\{X\subseteq I_{a}^{-}\colon a(X)<a(I_{a}^{-}\setminus X)\}$\\
\hline
\textbf{Heavy} & $I_{a,>}^{+}=\{X\subseteq I_{a}^{+}\colon a(X)>a(I_{a}^{+}\setminus X)\}$ &  $I_{a,>}^{-}=\{X\subseteq I_{a}^{-}\colon a(X)>a(I_{a}^{-}\setminus X)\}$ \\
\hline
\end{tabular}
\end{center}

We call a point $a\in H$ \textbf{generic} if 
 \begin{align*}
 S_a^+\sqcup S_a^-&=2^{[d]}\setminus \{\emptyset, [d]\} \, , \\
 I_{a,>}^+\sqcup I_{a,<}^+&=2^{I_a^+}\, , \text{ and} \\
 I_{a,>}^-\sqcup I_{a,<}^-&=2^{I_a^-} \, .
 \end{align*}
In other words, a point is generic if every proper sub-sum of its coordinates is non-zero, and any sub-sum of the positive (negative) support is never equal in absolute value to that of its complement in the positive (negative) support. It follows that for generic $a\in H$, the set systems defined above are determined only by the data 
\[
D(a) := (S_a^+, I_{a,>}^+,I_{a,<}^-)
\]
consisting of the positive sums, heavy positive sums and light negative sums. With these definitions, we have the following combinatorial characterization of equivalence of bisectors.
\begin{thm}\label{set_sys}
For generic points $a,b\in H$, the bisectors $\bis^{P} (0,a)$ and $\bis^{P}(0,b)$ with respect to the discrete Wasserstein distance are equivalent if and only if $D(a)=D(b)$.
\end{thm}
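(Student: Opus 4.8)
The plan is to translate the statement into the language of bisection cones and then lean on the inequality description of Proposition~\ref{biscone_kd}. By Definition~\ref{def:equivalencebisectors}, $\bis^P(0,a)$ and $\bis^P(0,b)$ are equivalent precisely when $a\in\mathcal{B}_{I,J}\Leftrightarrow b\in\mathcal{B}_{I,J}$ for every ordered pair of proper non-empty subsets $I,J\subsetneq[d]$. It is also worth recording at the outset that, for a generic $a$, the triple $D(a)$ determines all of the associated set systems: $S_a^-$ is the complement of $S_a^+$ in $2^{[d]}\setminus\{\emptyset,[d]\}$, the supports $I_a^\pm$ are the one-element members of $S_a^\pm$, and $I_{a,<}^+$, $I_{a,>}^-$ are the complements of $I_{a,>}^+$ in $2^{I_a^+}$ and of $I_{a,<}^-$ in $2^{I_a^-}$ by the genericity partitions; moreover $D(-a)=(S_a^-,I_{a,<}^-,I_{a,>}^+)$, so $D(a)$ also pins down $D(-a)$.

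For the implication ``$D(a)=D(b)\Rightarrow$ equivalent'' I would fix $I,J$ and run through the four cases of Proposition~\ref{biscone_kd}, showing in each that the predicate ``$a\in\mathcal{B}_{I,J}$'' is a function of $D(a)$. Conditions of the form $a_i\ge0$ or $a_i\le0$ depend only on $I_a^\pm$; conditions $a(I)\ge0$, $a(J)\le0$ depend only on $S_a^\pm$ (genericity upgrades the weak inequalities to strict, since proper sub-sums are non-zero). The only non-routine point is the universally quantified conditions (iii), (iv) of Cases~(II) and~(III). Assuming the sign conditions (i), (ii) hold --- itself decided by $I_a^\pm$ --- one has $I\subseteq I_a^+$ and $J\subseteq I_a^-$; writing $T:=I^c\cap J^c$ and using $[d]=I\sqcup J\sqcup T$, the minimum of $a(K)$ over $K\subseteq T$ is attained at $T\cap I_a^-=I_a^-\setminus J$ and the maximum at $T\cap I_a^+=I_a^+\setminus I$. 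Hence (iii) becomes $a(J)<a(I_a^-\setminus J)$, i.e. $J\in I_{a,<}^-$, and (iv) becomes $a(I_a^+\setminus I)<a(I)$, i.e. $I\in I_{a,>}^+$ --- both entries of $D(a)$ --- and here genericity is invoked exactly in the form ``$a(X)\neq a(I_a^\pm\setminus X)$''. Case~(I) is then immediate from $S_a^\pm$ and $I_a^\pm$, Case~(IV) from $I_a^\pm$, and Case~(III) is handled by the identity $F_I=-F_{I^c}$, which gives $\mathcal{B}_{I,J}=-\mathcal{B}_{I^c,J^c}$ and reduces it to Case~(II) for $-a$; since $D(a)$ determines $D(-a)$, this closes the implication.

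For the converse I would argue contrapositively and recover $D(a)$ from the membership pattern. First, applying Proposition~\ref{h_decomp} within the ambient hyperplane $H$ to the facet $F_K=\{x\in P:x(K)=1\}$ (with valid inequality $x(K)\le1$) yields $\{x\in H:x(K)\ge0\}=\bigcup_{L\neq K}\mathcal{B}_{K,L}$; since $a$ is generic, $a(K)\neq0$, so $K\in S_a^+$ if and only if $a$ lies in at least one $\mathcal{B}_{K,L}$. Therefore equivalent bisectors force $S_a^+=S_b^+$, hence $I_a^\pm=I_b^\pm=:I^\pm$ (which are non-empty because $a\in H$ is non-zero). Next, for $\emptyset\neq X\subsetneq I^+$ consider $\mathcal{B}_{X,I^-}$; the pair $(X,I^-)$ lies in Case~(II), and by the extremal-$K$ computation above its conditions (i),(ii),(iii) hold automatically while (iv) collapses to $a(I^+\setminus X)<a(X)$, i.e. $X\in I_{a,>}^+$. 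Thus the pattern detects $I_{a,>}^+$ for all such $X$ --- and for $X\in\{\emptyset,I^+\}$ the heavy/light status is forced --- so $I_{a,>}^+=I_{b,>}^+$. Symmetrically, $\mathcal{B}_{I^+,Y}$ for $\emptyset\neq Y\subsetneq I^-$ detects membership in $I_{a,<}^-$. Altogether $D(a)=D(b)$, finishing the contrapositive.

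The step I expect to be the main obstacle is the forward case analysis: one must check that in each of the four cases of Proposition~\ref{biscone_kd} the interplay between the coordinate-sign conditions and the quantified sum conditions really collapses onto the three components of $D(a)$, and one must be careful to invoke genericity only in the stated form (differences $a(X)-a(I_a^\pm\setminus X)$ are non-zero) rather than the stronger --- and false --- assumption that all sub-sum differences are non-zero. Tracking which of the auxiliary index sets are contained in $I_a^+$ versus $I_a^-$ throughout the reduction of conditions (iii) and (iv) is where the bookkeeping is heaviest, and getting that right for Case~(II) essentially also delivers Case~(III) (via $-a$) and the two cone families used in the converse.
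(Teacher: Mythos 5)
Your proof is correct and follows essentially the same route as the paper: both directions reduce to the inequality description of Proposition~\ref{biscone_kd}, with the key observation that in Cases~(II)/(III) the extremal subset $K\subseteq I^c\cap J^c$ is $(I^c\cap J^c)\cap I_a^{\pm}=I_a^{\pm}\setminus J$ or $I_a^{\pm}\setminus I$, so conditions (iii)/(iv) collapse to membership of $J$ in $I_{a,<}^-$ and of $I$ in $I_{a,>}^+$. The only cosmetic differences are that you recover $S_a^+$ via Proposition~\ref{h_decomp} restricted to $H$ rather than by direct membership in $\mathcal{B}_{L,I_a^-}$ as the paper does, and you package Case~(III) through $D(-a)$; both are sound.
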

\begin{proof}
In the following proof, every reference of case \Romannum{1}, \Romannum{2}, \Romannum{3}, \Romannum{4} is made with respect to the four cases in Proposition \ref{biscone_kd}. 

We begin by showing that if $\bis^{P}(0,a)$ and $\bis^{P}(0,b)$ are equivalent then $D(a)=D(b)$ for any generic $a,b\in H$. That is, we need to show that in this case $S_a^+=S_b^+$, $I_{a,>}^+=I_{b,>}^+$ and $I_{a,<}^-=I_{b,<}^-$.

$S_a^+=S_b^+$: First, we observe that for $I=I_a^{+}$ and $J=I_a^{-}$, by Proposition~\ref{biscone_kd} case (IV), we have that $a\in \mathcal{B}_{I,J}$. Since, by assumption, $\bis^{P}(0,a)$ and $\bis^{P}(0,b)$ are equivalent we therefore must also have $b\in \mathcal{B}_{I,J}$ which in turn implies $b_i>0$ for all $I_a^+$, that is, $I_a^{+} \subseteq I_b^{+}$. By symmetry of the argument we also obtain $I_b^{+}\subseteq I_a^{+}$ and thus equality of $I_a^{+}$ and $I_b^{+}$. Next, we observe that any non-empty subset of $I_a^+=I_b^+$ is contained in both $S_a^+$ and $S_b^+$. In order to show that $S_a^+$ equals $S_b^+$ it therefore suffices to prove that any subset $L\in S_a^+$ with $L\cap I_a^-\neq \emptyset$ is also contained in $S_b^+$. To see that we set $I=L$ and $J=I_a^-$. Then $I\cap J\neq \emptyset$. If $I\cup J=[d]$ then $I_a^+\subset L$ and therefore $L^c\subset I_a^-=I_b^-$. In particular, $L^c\in S_b^-$ and, equivalently, $L\in S_b^+$. Otherwise, if $I\cup J\neq [d]$ then $I^c\cap J^c\neq \emptyset$. Then $a$ satisfies the four conditions in Proposition~\ref{biscone_kd} case (I): $a(I)>0$ since $I=L\in S_a^+$ by assumption, and also conditions (i),(ii) and (iv) are satisfied for $J=I_a^-$. Thus, $a\in \mathcal{B}_{I,J}$. Since $\bis^{P}(0,a)$ and $\bis^{P}(0,b)$ are equivalent we therefore also have $b\in \mathcal{B}_{I,J}$ which in turn implies $L\in S_b^+$ by Proposition~\ref{biscone_kd} case (I) condition (iii).

$I_{a,>}^+=I_{b,>}^+$: Let $L\in I_{a,>}^+$. Then $L\subseteq I_a^+=I_b^+$ and $a(L)> a(I_a^{+}\setminus L)$. If $L=I_a^+=I_b^+$ then clearly $L$ is in $I_{a,>}^+$ as well as $I_{b,>}^+$. We may thus assume that $L\subset I_a^+$ is a proper subset. Again, we set $I=L$ and $J=I_a^-$. Then $I\cap J=\emptyset$ but $I^c\cap J^c\neq \emptyset$ since $L$ is a proper subset of $I_a^+$. Then $a$ satisfies all conditions in Proposition~\ref{biscone_kd} case (II): conditions (i)-(iii) are satisfied since $J=I_a^-$ and $I^c\cap J^c\subset I_a^+$, and condition (iv) is satisfied since $I^c\cap J^c=I_a^+\setminus L$. Thus, $a\in \mathcal{B}_{I,J}$ and therefore we also must have $b\in \mathcal{B}_{I,J}$ which in turn by Proposition~\ref{biscone_kd} case (II) condition (iv) applied to $K=I_a^+\setminus L$ implies that $L\in I_{b,>}^+$. 

$I_{a,<}^-=I_{b,<}^-$: Follows from the previous argument by considering $-a$ and $-b$.

For the other direction, we need to show that whenever $S_a^+=S_b^+$, $I_{a,>}^+=I_{b,>}^+$ and $I_{a,<}^-=I_{b,<}^-$, we have that $a\in \mathcal{B}_{I,J}$ if and only if $b\in \mathcal{B}_{I,J}$ for all proper subsets $I,J\subset [d]$.

We observe that conditions (i) and (ii) in all four cases in Proposition~\ref{biscone_kd} as well as conditions (iii) and (iv) in case (I) are simultaneously satisfied or violated by $a$ and $b$, since $S_a^+$ equals $S_b^+$. In particular, it follows that $a\in \mathcal{B}_{I,J}$ if and only if $b\in \mathcal{B}_{I,J}$ for all $I,J$ satisfying the conditions (I) and (IV) in Proposition~\ref{biscone_kd}. To prove the equivalence of $\bis^{P}(0,a)$ and $\bis^{P} (0,b)$ it thus only remains to show that conditions (iii) and (iv) in cases (II) and (III) are simultaneously satisfied/violated by $a$ and $b$. 

Let $a\in \mathcal{B}_{I,J}$ for proper subsets $I$ and $J$ that satisfy case (II), that is, $I\cap J= \emptyset$ and $I^c\cap J^c\neq \emptyset$. By condition (i), $I_a^-\subseteq I^c$. Thus, by setting $K=I^c\cap J^c\cap I_a^-$ in condition (iii) of case (II) we see that $J\in I_{a,<}^-$. By assumption, we also have $J\in I_{b,<}^-$ which implies that $b$ satisfies condition (iii) of case (II) for all subsets $K\subseteq I^c\cap J^c$. Similarly, we see that $b$ must also satisfy condition (iv) if it is satisfied by $a$ since $I_{a,>}^+=I_{b,>}^+$. This also shows that $b\in \mathcal{B}_{I,J}$. The argument for the equivalence of $a\in \mathcal{B}_{I,J}$ and $b\in\mathcal{B}_{I,J}$ for subsets $I,J$ that satisfy the assumptions of case (III) in Proposition~\ref{biscone_kd} is analogous.
\end{proof}

Theorem~\ref{set_sys} now allows us to give a polyhedral description of the equivalence classes of points in general position in $H$.

\begin{cor}\label{cor:dissectionWasserstein}
    For any point $a\in H$, $a$ is in general position if and only if $a$ is generic. The closure of the sets
    \[
    F_a \ = \ \{x\in H \, \colon \, x \text{ generic and }D(x)=D(a)\}
    \]
    form a dissection of $H$ into polyhedral cones.
\end{cor}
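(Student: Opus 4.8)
The plan is to first pin down $F_a$ explicitly, for generic $a$, as the relative interior of a polyhedral cone cut out by a small and explicit family of linear functionals, and then to deduce both assertions of the corollary from this description together with Theorem~\ref{set_sys}.

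\emph{Step 1 (an explicit description of $F_a$).} For a generic point $a\in H$ I would associate the finite family $\mathcal{L}_a$ of linear functionals on $H$ consisting of $x\mapsto x(K)$ for all $\emptyset\neq K\subsetneq[d]$, together with $x\mapsto x(X)-x(I_{a}^{+}\setminus X)$ for $X\subseteq I_{a}^{+}$ and $x\mapsto x(X)-x(I_{a}^{-}\setminus X)$ for $X\subseteq I_{a}^{-}$. The first observation is that $a$ is generic precisely when $\ell(a)\neq 0$ for every $\ell\in\mathcal{L}_a$: the three conditions in the definition of genericity say exactly that every proper sub-sum is nonzero and that every subset of $I_{a}^{+}$ (resp.\ $I_{a}^{-}$) differs in weight from its complement inside $I_{a}^{+}$ (resp.\ $I_{a}^{-}$). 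The second observation is that, for generic $x$, the three components $S_{x}^{+}$, $I_{x,>}^{+}$, $I_{x,<}^{-}$ of $D(x)$ record precisely the sign vectors of the functionals in $\mathcal{L}_x$ (the positive support, hence $I_{x}^{+}$, being read off from the signs of the singletons). Combining these, I would prove the identity
\[
F_a \;=\; \bigl\{\, x\in H \;:\; \sgn(\ell(x))=\sgn(\ell(a)) \text{ for all } \ell\in\mathcal{L}_a \,\bigr\}.
\]
The inclusion ``$\subseteq$'' follows by unwinding $D(x)=D(a)$, which first forces $I_{x}^{\pm}=I_{a}^{\pm}$, so that $\mathcal{L}_x=\mathcal{L}_a$, and then matches all the signs; the inclusion ``$\supseteq$'' follows because matching all signs forces every $\ell(x)\neq 0$ and (reading off singletons) $I_{x}^{\pm}=I_{a}^{\pm}$, hence $\mathcal{L}_x=\mathcal{L}_a$ and $x$ is generic, whereupon $D(x)=D(a)$. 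Since the right-hand side is a nonempty finite intersection of open homogeneous half-spaces, $\overline{F_a}$ is the polyhedral cone obtained by relaxing each strict inequality, and it is full-dimensional in $H$.

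\emph{Step 2 (general position $\Leftrightarrow$ generic).} If $a$ is generic, then $F_a$ is an open neighbourhood of $a$ every point of which is generic with data $D(a)$, hence equivalent to $\bis^{P}(0,a)$ by Theorem~\ref{set_sys}; thus $a$ lies in the interior of its equivalence class and is in general position. Conversely, if $a$ is not generic then some $\ell\in\mathcal{L}_a$ vanishes at $a$; the set of non-generic points of $H$ is contained in the finite union of hyperplanes $\{x(K)=0\}$ and $\{x(X)=x(L\setminus X)\}$, so it is nowhere dense and generic points are dense in $H$. Hence in every neighbourhood of $a$ there are generic points $a^{+},a^{-}$ with $\ell(a^{+})>0$ and $\ell(a^{-})<0$, and a short case check shows $D(a^{+})\neq D(a^{-})$: flipping the sign of $\ell$ moves the relevant set between $S^{+}$ and $S^{-}$, or between $I_{>}^{+}$ and $I_{<}^{+}$, or between $I_{>}^{-}$ and $I_{<}^{-}$, and when $\ell$ is a coordinate functional the positive support itself changes. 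By Theorem~\ref{set_sys} the bisectors $\bis^{P}(0,a^{+})$ and $\bis^{P}(0,a^{-})$ are inequivalent, so at most one of them is equivalent to $\bis^{P}(0,a)$; thus every neighbourhood of $a$ contains a point not equivalent to $a$, and $a$ is not in general position.

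\emph{Step 3 (the dissection), and the expected obstacle.} There are only finitely many realizable data $D$, hence finitely many distinct nonempty cells $F_a$, and by Step~1 each has a full-dimensional polyhedral cone as its closure. Their closures cover $H$ because any $x\in H$ is a limit of generic points and, passing to a subsequence of constant data, lies in the corresponding $\overline{F_a}$. Their interiors are pairwise disjoint: a generic point $y$ in $\interior(\overline{F_a})\cap\interior(\overline{F_{a'}})$ satisfies $\sgn(\ell(y))=\sgn(\ell(a))$ for all $\ell\in\mathcal{L}_a$ — no opposite sign is allowed, and genericity forbids a zero — so $y\in F_a$, giving $D(y)=D(a)$, and symmetrically $D(y)=D(a')$, whence the two cells coincide. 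Therefore the closures of the $F_a$ dissect $H$ into polyhedral cones. I expect the bulk of the work to be Step~1, namely setting up the dictionary between the set-system data $D(a)$ and the sign vectors of $\mathcal{L}_a$ and, in particular, verifying that genericity is exactly the nonvanishing of all these functionals; once that is in place Steps~2 and~3 are soft density and closure arguments, the only remaining delicate point being the support-tracking in the case analysis of Step~2.
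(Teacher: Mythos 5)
Your proposal is correct and follows essentially the same route as the paper: partition the (open, dense) set of generic points according to the data $D(\cdot)$, observe via Theorem~\ref{set_sys} that each class $F_a$ is an open polyhedral cone whose closure contributes to a dissection of $H$, deduce general position from openness of $F_a$, and deduce failure of general position for non-generic points from the fact that they lie on cell boundaries and can be perturbed into inequivalent classes. Your Step~1 (the explicit sign-vector description of $F_a$ via the family $\mathcal{L}_a$) simply makes precise the polyhedrality that the paper's proof asserts directly from the definitions.
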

\begin{proof}
By definition, the set of generic points is an open and dense subset of $H$. By Theorem~\ref{set_sys} this set is partitioned according to the value of $D(a)$, that is, into sets of the form $F_a$. From their definition it follows that these sets are open polyhedral cones. Since their union is dense, their closures form a dissection of $H$. 

By Theorem~\ref{set_sys}, any point in $F_a$ is in general position since $F_a$ is open and the equivalence class of the bisector is constant on $F_a$. This shows that any generic point is in general position. On the other hand, any non-generic point lies on the boundary of some $F_a$ since those form a dissection of $H$. In particular, the equivalence class of such a point is not stable under small perturbations and thus the point is not in general position.
\end{proof}

By Corollary~\ref{cor:dissectionWasserstein} the equivalence classes of points in general position are given by the interiors of polyhedral cones that form a dissection of $H$. The next result shows that this dissection is, in fact, a polyhedral subdivision. We give a description as the common refinement of the \textbf{resonance arrangement} $\mathcal{H}$ (restricted to $H$) which consists of all hyperplanes
\[
x(I) \ = \ 0 \, , \quad \emptyset \neq I \subseteq [d]\, ,
\]
 and the regions of linearity of a piecewise linear function that we explicitly describe. In particular, the Wasserstein distance admits a bisection fan.

\begin{thm}~\label{thm:WassersteinPolyhedral}
Let $\mathcal{G}$ be the polyhedral fan induced by the regions of linearity of the function
\[
p(a) = \sum _{I\subseteq [d]} f_I (a)+g_I(a)
\]
where
\[
f_I (a) \ = \ \max \{\{a(K)\}_{K\subseteq I}, \{a(K)\}_{K\subseteq I^c}\}
\]
and
\[
g_I (a) \ = \ \min \{\{a(K)\}_{K\subseteq I}, \{a(K)\}_{K\subseteq I^c}\} \, .
\]
Then the bisection fan $\Delta$ of the discrete Wasserstein distance is given by the common refinement of $\mathcal{G}$ and $\mathcal{H}$.
\end{thm}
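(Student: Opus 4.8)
The plan is to reduce, via Theorem~\ref{set_sys} and Corollary~\ref{cor:dissectionWasserstein}, to a direct analysis of the regions of linearity of $p$. By Corollary~\ref{cor:dissectionWasserstein} the points of $H$ in general position are exactly the generic ones, and by Theorem~\ref{set_sys} two generic points $a,b\in H$ have equivalent bisectors $\bis^{P}(0,a)$ and $\bis^{P}(0,b)$ if and only if $D(a)=D(b)$, where $D(a)=(S_a^{+},I_{a,>}^{+},I_{a,<}^{-})$. Writing $\Delta$ for the common refinement of $\mathcal{G}$ and $\mathcal{H}$, it therefore suffices to prove that for generic $a,b\in H$ one has $D(a)=D(b)$ if and only if $a$ and $b$ lie in the interior of a common maximal cone of $\Delta$; the walls of $\Delta$ lie among the hyperplanes $\{a(I)=0\}$ coming from $\mathcal{H}$ and the locus where $p$ is not smooth, and the computation below shows both of these lie inside the non-generic locus, so every generic point is in the interior of a unique maximal cone of $\Delta$ and the displayed equivalence is exactly Definition~\ref{bisfandef}.

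The key computation is the following. For a generic $a\in H$ the optimal $K$ in $\max_{K\subseteq I}a(K)$ is $I\cap I_a^{+}$ and in $\min_{K\subseteq I}a(K)$ is $I\cap I_a^{-}$, so using $a([d])=0$,
\[
f_I(a)=\max\{a(I\cap I_a^{+}),\,a(I_a^{+})-a(I\cap I_a^{+})\},\qquad g_I(a)=\min\{a(I\cap I_a^{-}),\,a(I_a^{-})-a(I\cap I_a^{-})\}.
\]
In particular, inside a fixed open chamber $R$ of $\mathcal{H}$ — on which the supports $I^{+}:=I_a^{+}$ and $I^{-}:=I_a^{-}$ are constant — the term $f_I$ depends only on $X:=I\cap I^{+}$ and $g_I$ only on $Y:=I\cap I^{-}$. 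Collecting the $2^{d}$ summands of $p$ accordingly,
\[
p|_{R}\;=\;2^{|I^{-}|}\sum_{X\subseteq I^{+}}\max\{a(X),a(I^{+}\setminus X)\}\;+\;2^{|I^{+}|}\sum_{Y\subseteq I^{-}}\min\{a(Y),a(I^{-}\setminus Y)\}.
\]

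I would then analyse the non-smooth locus of $p|_{R}$. Each summand $\max\{a(X),a(I^{+}\setminus X)\}$ is convex and bends exactly along $\{a(X)=a(I^{+}\setminus X)\}$, which is a genuine hyperplane of $H$ whenever $\emptyset\neq X\subsetneq I^{+}$; a short check on the functionals $\mathbf 1_X-\mathbf 1_{I^{+}\setminus X}$ modulo the all-ones vector shows that two such hyperplanes for $X$ and $X'$ coincide only if $X'\in\{X,I^{+}\setminus X\}$, and never coincide with a hyperplane $\{a(Y)=a(I^{-}\setminus Y)\}$ from the $g$-terms. Hence at a relative-interior point of a wall $\{a(X_0)=a(I^{+}\setminus X_0)\}$ the only non-smooth summands of $p|_{R}$ are the two \emph{equal} terms indexed by $X_0$ and $I^{+}\setminus X_0$, and they add rather than cancel, so $p|_{R}$ genuinely bends there. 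Consequently the maximal cells of $\mathcal{G}$ meeting $R$ restrict to the chambers of the arrangement
\[
\mathcal{W}_{R}=\bigl\{\,a(X)=a(I^{+}\setminus X)\ :\ \emptyset\neq X\subsetneq I^{+}\,\bigr\}\cup\bigl\{\,a(Y)=a(I^{-}\setminus Y)\ :\ \emptyset\neq Y\subsetneq I^{-}\,\bigr\}.
\]
Two points of $R$ lie in the same chamber of $\mathcal{W}_{R}$ precisely when they agree on the sign of $a(X)-a(I^{+}\setminus X)$ and of $a(Y)-a(I^{-}\setminus Y)$ for all such $X,Y$, i.e. when they have the same heavy/light data $(I_{a,>}^{+},I_{a,>}^{-})$ — equivalently $(I_{a,>}^{+},I_{a,<}^{-})$, since for generic $a$ the light system is the complement of the heavy one in $2^{I_a^{-}}$. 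Since a maximal cone of $\Delta$ has its interior inside some open chamber $R$ of $\mathcal{H}$ and there restricts to a chamber of $\mathcal{W}_{R}$, two generic points lie in the interior of the same maximal cone of $\Delta$ if and only if they lie in the same chamber $R$ (equivalently $S_a^{+}=S_b^{+}$, as $\mathcal{H}$ is the resonance arrangement) and share the same heavy/light data — that is, if and only if $D(a)=D(b)$. Together with Theorem~\ref{set_sys} and Corollary~\ref{cor:dissectionWasserstein} this proves that $\Delta$ is the bisection fan of the discrete Wasserstein distance.

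The main obstacle is exactly the no-cancellation step: summing the $2^{d}$ piecewise-linear functions $f_I,g_I$ could in principle flatten $p$ across some wall $\{a(X)=a(I^{+}\setminus X)\}$, in which case $\mathcal{G}$ would be strictly too coarse and $\Delta$ would fail to separate a pair of non-equivalent bisectors sitting on opposite sides of that wall. The resolution is the bookkeeping indicated above — restrict to a chamber $R$ of $\mathcal{H}$, observe that $f_I$ there depends only on $I\cap I^{+}$, determine which index sets yield the same wall (only the pair $\{X,I^{+}\setminus X\}$), and note that the two coinciding $\max$-terms contribute bends of the same sign. A secondary, routine point is to confirm that $\mathcal{G}\wedge\mathcal{H}$ is a genuine polyhedral fan and that its walls lie in the non-generic locus, so that Definition~\ref{bisfandef} applies verbatim.
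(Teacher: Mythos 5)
Your proposal is correct and follows the paper's overall strategy (reduce via Theorem~\ref{set_sys} and Corollary~\ref{cor:dissectionWasserstein} to showing that $D(a)=D(b)$ exactly when $a,b$ share a maximal cone of $\mathcal{G}\wedge\mathcal{H}$), but the second half is argued quite differently, and more carefully. The paper treats the summands individually: for the coarsening direction it observes that each $f_I$ and $g_I$ is linear on $F_a$ (the maximizer being $I\cap I_a^+$ or $I^c\cap I_a^+$ according to which is heavy), and for the separating direction it shows that $D(a)\neq D(b)$ forces $a$ and $b$ into different regions of linearity of some single $f_I$ or $g_I$, then concludes they lie in different cones of $\Delta$. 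That last inference silently assumes the regions of linearity of the sum $p$ refine those of each summand, i.e.\ that no bends cancel when the $2^d$ convex $\max$-terms and concave $\min$-terms are added --- precisely the obstacle you isolate. Your route closes this by restricting to an open chamber $R$ of $\mathcal{H}$, collapsing the sum to $2^{|I^-|}\sum_X\max\{a(X),a(I^+\setminus X)\}+2^{|I^+|}\sum_Y\min\{a(Y),a(I^-\setminus Y)\}$, checking that an $f$-wall can only coincide with the $f$-wall of its complementary index (where the two identical terms reinforce) and never with a $g$-wall (disjoint supports modulo the all-ones vector), so $p|_R$ genuinely breaks along every hyperplane of $\mathcal{W}_R$. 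This yields an explicit identification of the maximal cones of $\Delta$ inside $R$ with the chambers of $\mathcal{W}_R$, from which both directions of the equivalence with $D(a)=D(b)$ fall out simultaneously. The payoff of your version is a complete, self-contained description of $\Delta$ as a chamber complex of an explicit hyperplane arrangement on each resonance chamber and an airtight no-cancellation argument; the payoff of the paper's version is brevity, at the cost of leaving that refinement step implicit.
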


\begin{proof}
Let $a, b$ be in general position. By Theorem \ref{set_sys} and Corollary~\ref{cor:dissectionWasserstein}, we need to show that $D(a) = D(b)$ if and only if $a, b$ lie in the interior of the same maximal cone of $\Delta.$ For the only if direction, consider $F_{a}.$ By definition, for all points $x$ in $F_a$ and all proper subsets $I$ we have $x(I)>0$ if and only if $I\in S_a^+$. In particular, $F_a$ is contained in a region of $\mathcal{H}$. Furthermore, we observe that the set that maximises $a(K)$ in the definition of $f_I$ is either $I\cap I_a^+$ or at $I^c\cap I_a^+$, depending on if $I\cap I_a^+\in I_{a,>}^+$ or $I^c\cap I_a^+\in I_{a,>}^+$. Similarly, the set that minimizes $a(K)$ in the definition of $g_I$ is either $I\cap I_a^-$ or $I^c\cap I_a^-$, depending on if $I\cap I_a^-\in I_{a,<}^-$ or $I^c\cap I_a^+\in I_{a,<}^-$. That is, both $f_I$ and $g_I$ are linear on $F_a$ for all $I\subseteq [d]$ and so is their sum. This means that $F_{a}$ must be contained in some region of linearity of the function $p$ which is equal to some maximal cone of $\mathcal{G}.$  In particular, the common refinement of $\mathcal{H}$ and $\mathcal{G}$ coarsens the equivalence classes of bisectors.

    It remains to show that for points $a,b$ in general position and $D(a)\neq D(b)$ we have that either $a(I)$ and $b(I)$ have different signs for some proper subset $I\subset [d]$ or that $a$ and $b$ lie in different regions of linearity for $f_I$ or $g_I$ for some $I\subseteq [d]$. Indeed, if $D(a)\neq D(b)$, then either $S_a^+\neq S_b^+$, or $I_{a, >}^{+} \neq I_{b, >}^{+}$, or $I_{a, <}^{+} \neq I_{b, <}^{-}$ . Suppose the first case holds. That is, there is an $I\subseteq [d]$ such that $a(I)>0$ and $b(I)<0$ or vice versa. Or, if $S_a^+ = S_b^+$ but $I_{a,>}^+\neq I_{b,>}^+$, then there exists an $I\subseteq I_a^+=I_b^+$ such that $f_I(x)=x(I)$ in a small neighborhood of $a$ but $f_I(x)=x(I_b^+\setminus I)$ in a small neighborhood of $b$, that is, $a$ and $b$ are not in the same region of linearity of $f_I$. Similarly, if $I_{a,>}^-= I_{b,>}^-$ is violated then $a$ and $b$ lie in different regions of linearity for $g_I$. In either case, it follows that $D(a) \neq D(b)$ implies that $a, b$ lie in different maximal cones of $\Delta.$
\end{proof}

\begin{figure}[h]
    \centering
    \includegraphics[scale=0.5]{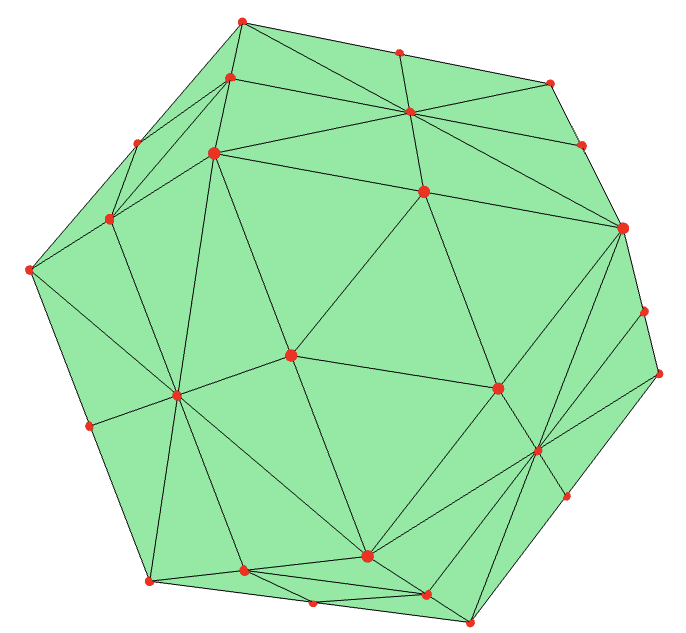}
    \caption{Bisection fan of the cuboctahedron, i.e. the root polytope of the lattice $A_{4}.$}
\label{fig:bisfan_rhombic_dodec}
\end{figure}

\section{Complexity of bisectors}\label{sec:complexity}
In this section, we investigate the number of maximal cells in the subdivision of the bisector which can be considered as a measure of its complexity. For a point $a\in \mathbb{R}^d$, we denote 
\[
N(a)=N_P(a) \ = \ |\{\text{maximal cells in $\bis \nolimits ^P (0,a)\}|$}.
\]
By Theorem~\ref{thm:generalposition}, for $a$ in general position, this is equivalent to
\[
N(a) \ = \ \{(F,G)\colon \, F,G \text{ distinct facets}, \, a\in \mathcal{B}_{F,G}\} \, .
\]
Let $P_{2n}$ denote a centrally symmetric polygon in $\mathbb{R}^{2}$ on $2n$ vertices, $C_{d} = [-1, 1]^{d}$ the hypercube, $\diamond_{\tiny d}=\conv \{\pm e_{i}: i \in [d]\}$ the cross-polytope and $Q_{d} = \conv\{e_{i}-e_{j}: 1 \leq i \neq j \leq d\}$ the root polytope of type $A$ studied in the previous sections.
We then have the following bounds on the number of maximal cells. Note that the number of maximal cells in case of the polygon and the hypercube is precise. Further, in the polygon case, the number is independent of the metric properties of the polygon.

\begin{thm}\label{prop:complexity}
    Let $a \in \mathbb{R}^{d}$ be a generic point. Then \begin{enumerate}
        \item[(1)] $N_{P_{2n}}(a) = 2n-1.$ 
        \item[(2)] $N_{C_{d}}(a) = d^{2}-d+1.$
        \item[(3)] $N_{\diamond_{d}}(a) = \Omega(2^{d}).$
        \item[(4)] $N_{Q_{d}}(a) =  \Omega(2^{d}).$
    \end{enumerate}
\end{thm}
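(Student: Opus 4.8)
The plan is to compute $N(a)$ in each of the four cases through the reformulation from Theorem~\ref{thm:generalposition}: for $a$ in general position, $N(a)$ equals the number of ordered pairs $(F,G)$ of distinct facets with $a\in\mathcal{B}_{F,G}$. From there, each case becomes a counting problem against the explicit description of the bisection cones established in the relevant section.

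For (1) I would partition the ordered pairs $(F_i,F_j)$, $i\neq j$, by the cyclic difference $\delta=(j-i)\bmod 2n\in\{1,\dots,2n-1\}$, so that for each fixed $\delta$ the relevant cones are $\mathcal{B}_{i,\,i+\delta}$ for $i\in\mathbb{Z}/2n$. Applying Lemma~\ref{lem:2dim_biscone_decomp} to $H^{\geq}_{i,i+\delta}$ and, using the central symmetry $v_{i+n}-v_{j+n}=-(v_i-v_j)$, to its opposite half-plane, shows that these $2n$ cones have pairwise disjoint interiors and cover $\mathbb{R}^2$; hence a generic $a$ lies in the interior of exactly one of them. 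Summing over the $2n-1$ values of $\delta$ gives $N_{P_{2n}}(a)=2n-1$. For (2) I would split the ordered pairs of facets of $C_d$ into those of the form $(F_i^{\sigma_1},F_j^{\sigma_2})$ with $i\neq j$, and those with $i=j$ and $\sigma_1\neq\sigma_2$. By Proposition~\ref{cube:bis_cone}, $\mathcal{B}_{i^{\sigma_1},j^{\sigma_2}}=\sigma_1\mathcal{H}_i\cap(-\sigma_2\mathcal{H}_j)$ when $i\neq j$, so $a\in\mathcal{B}_{i^{\sigma_1},j^{\sigma_2}}$ forces $\sigma_1=\sgn(a_i)$ and $\sigma_2=-\sgn(a_j)$, giving exactly one admissible sign choice for each of the $d(d-1)$ ordered index pairs (here we use that a generic $a$ has all coordinates nonzero). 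When $i=j$, again by Proposition~\ref{cube:bis_cone}, $\mathcal{B}_{i^{\sigma_1},i^{\sigma_2}}=C_{F_i^{\sigma_1}}$, and a generic point lies in exactly one facet cone of $C_d$, contributing one further pair. Altogether $N_{C_d}(a)=d(d-1)+1=d^2-d+1$.

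For the lower bounds (3) and (4) it suffices to exhibit exponentially many admissible pairs for an arbitrary generic $a$. For $\diamond_d$: since coordinate sign flips are linear isometries of $\diamond_d$, Proposition~\ref{prop:lineariso} lets me assume $a_i>0$ for all $i$; then for $I=[d]$ conditions (i), (ii), (iv) of Proposition~\ref{bis_tree} are automatic and (iii) reads $a(J)\le a(J^c)$, which by the pairing $J\leftrightarrow J^c$ (no ties, by genericity) holds for exactly $2^{d-1}$ subsets $J$, all distinct from $[d]$; hence $N_{\diamond_d}(a)\ge 2^{d-1}$. For $Q_d$: set $P=I_a^+$, $N=I_a^-$ (both nonempty since $a\in H$ is generic). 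I would consider the pairs $(I,J)$ with $I\in I_{a,>}^{+}$ and $J\in I_{a,<}^{-}$, so that $I\subseteq P$, $J\subseteq N$, in particular $I\cap J=\emptyset$. If $(I,J)\neq(P,N)$ then $I^c\cap J^c\neq\emptyset$ and case (II) of Proposition~\ref{biscone_kd} applies; the minimum of $a(K)$ over $K\subseteq I^c\cap J^c$ equals $a(N\setminus J)$ and the maximum equals $a(P\setminus I)$, so conditions (iii), (iv) reduce exactly to $a(J)\le a(N\setminus J)$ and $a(P\setminus I)\le a(I)$, which hold precisely because $J\in I_{a,<}^{-}$ and $I\in I_{a,>}^{+}$; the remaining pair $(P,N)$ lies in $\mathcal{B}_{P,N}$ by case (IV). A complement-pairing argument gives $|I_{a,>}^{+}|=2^{|P|-1}$ and $|I_{a,<}^{-}|=2^{|N|-1}$, so $N_{Q_d}(a)\ge 2^{|P|-1}2^{|N|-1}=2^{d-2}$.

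The main obstacles are of a bookkeeping nature rather than conceptual. In case (4) one must correctly identify the extremal subsets $N\setminus J$ and $P\setminus I$ among the $K\subseteq I^c\cap J^c$ and check that they land inside those sets, which relies precisely on the containments $I\subseteq P$ and $J\subseteq N$ enforced by conditions (i), (ii); getting the inclusions and the heavy/light orientations right is the delicate step. In case (1), some care is needed to upgrade Lemma~\ref{lem:2dim_biscone_decomp}, which covers a half-plane, to a genuine tiling of $\mathbb{R}^2$ by the $2n$ cones of a fixed cyclic difference. Since (3) and (4) only assert $\Omega$-bounds, the constant factors dropped in the pairing estimates are immaterial.
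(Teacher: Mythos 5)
Your proposal is correct and follows essentially the same route as the paper: the cyclic-difference tilings from Lemma~\ref{lem:2dim_biscone_decomp} for polygons, the sign analysis of Proposition~\ref{cube:bis_cone} for the cube, and counting pairs of heavy/light subsets against Propositions~\ref{bis_tree} and~\ref{biscone_kd} for the cross-polytope and $Q_d$. The only deviations are cosmetic: in (3) you fix $I=[d]$ after a sign-flip reduction and obtain the slightly better bound $2^{d-1}$ (the paper instead pairs $I\in I_{a,>}^{+}$ with $J\in I_{a,<}^{-}$ to get $2^{d-2}$), and in (4) you pick up the extra pair $(I_a^{+},I_a^{-})$ via case (IV) rather than doubling the count via case (III).
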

\begin{proof}
We handle each case separately, using the same notation as in the corresponding previous sections.

$(1)$: By Lemma \ref{lem:2dim_biscone_decomp} together with Corollary \ref{bfg_swap}, for any $1\leq i \neq j \leq 2n$, 
\[
    \mathbb{R}^{2} = \bigcup_{k=0}^{2n-1} \mathcal{B}_{i+k, j+k}
    \]
    defines a decomposition of $\mathbb
{R}^{2}$ into bisection cones with disjoint interiors where addition in each index is taken modulo $2n$. We observe that each such decomposition of $\mathbb{R}^{2}$ is completely determined by the value of $(i-j) \mod 2n$, of which there are $2n-1$ choices. In each partition, by the disjoint interior condition on the decomposition, a point $a$ in general position lies in exactly one bisection cone. This gives us a total of $2n-1$ bisection cones containing $a$ and hence $N_{P_{2n}}(a)=2n-1.$ 

$(2)$: By Proposition \ref{cube:bis_cone}, for all $i,j\in [d]$, $i\neq j$, we have $a\in \mathcal{B}_{i^{\sigma _1},j^{\sigma _2}}$ if and only if $\sigma_{1} a_{i} \geq 0$ and $(-\sigma_{2})a_{j} \geq 0$. That is, given $i\neq j$ and $a$ in general position there is exactly one choice for $\sigma _1,\sigma_{2}\in \{+,-\}$ for which $a\in \mathcal{B}_{i^{\sigma _1},j^{\sigma _2}}$. This amounts to $d(d-1)$ bisection cones in which $a$ is contained in, one for every choice of pair $(i,j)$ such that $i\neq j$. Furthermore, each $a$ in general position is contained in exactly one facet cone $a\in \mathcal{B}_{i^{\sigma _1},i^{\sigma _2}}=C_{F_i^{\sigma _1}}$. Thus, in total, every generic $a\in \mathbb{R}^d$ is contained in exactly $d(d-1)+1$ bisection cones.

$(3)$: By Proposition~\ref{bis_tree}, for all subsets $I,J\subseteq [d]$ we have that $a \in \mathcal{B}_{I, J}$ if and only if the following four conditions are satisfied.

\begin{enumerate}
        \item[(i)] $a_{i} \geq 0 \quad \text{ for all } \, i \in I \cap J^{c} \, ,$
        \item[(ii)] $a_{i} \leq 0 \quad \text{ for all }  \, i \in J \cap I^{c} \, ,$
        \item[(iii)] $a(J) \leq a(J^{c})\, ,$
        \item[(iv)] $a(I^{c}) \leq a(I).$
    \end{enumerate}
    We claim that these conditions are satisfied for every $I \in I_{a, >}^{+}$ and $J \in I_{a, <}^{-}.$ Indeed, since $I \subseteq I_{a}^{+}$ and $J \subseteq I_{a}^{-}$, $a_i\geq 0$ for all $i\in I\subseteq I_a^+$ and $a_i\leq 0$ for all $i\in J\subseteq I_a^-$. Hence conditions $(i)$ and $(ii)$ hold. For condition $(iii)$, we note that since $J \in I_{a, <}^{-},$ we have 
    \[
    a(J) \leq a(I_{a}^{-}\setminus J)\leq a(I_{a}^{-}\setminus J) + a(I_{a}^{+}) = a(J^{c})\, .
    \]
 Similarly, since $I \in I_{a, >}^{+}$, condition $(iv)$ is satisfied:
 \[
 a(I) \geq a(I_{a}^{+} \setminus I) \geq a(I_{a}^{+} \setminus I) + a(I_{a}^{-}) = a(I^{c})\, .
 \]

It follows that $N_{\diamond_{\tiny d}}(a) \geq |I_{a, >}^{+}| \cdot |I_{a, <}^{-}|.$ If $|I_{a}^{+}| = l$, then $|I_{a, >}^{+}| = 2^{l-1}$ since for every subset $I$ of $I_a^+$, either $I$ or $I_{a}^{+}\setminus I$ is contained in $I_{a, >}^{+}$. Similarly, we have $|I_{a, <}^{-}| = 2^{d-l-1}$ which implies that $N_{\diamond_{\tiny d}}(a) \geq 2^{d-2}$ and thus $N_{\diamond_{\tiny d}}(a)=\Omega (2^d)$.

$(4)$: As before, we consider pairs $(I, J) \in I_{a, >}^{+} \times I_{a, <}^{-}.$ All such pairs satisfy $I \cap J = \emptyset$ and $I^{c} \cap J^{c} \neq \emptyset$ except when $I = I_{a}^{+}$ and $J = I_{a}^{-}.$ Any such pairs satisfy conditions of type II in Proposition~\ref{biscone_kd}, namely 
\begin{enumerate}
        \item[(i)] $a_{i} \geq 0 \quad \forall \, i \in I$
        \item[(ii)] $a_{i} \leq 0 \quad \forall \, i \in J$
        \item[(iii)] $a(J) \leq a(K) \quad \forall \, K \subseteq I^c \cap J^c$
        \item[(iv)] $a(K) \leq a(I) \quad \forall \, K \subseteq I^c \cap J^c$
\end{enumerate}

The first two conditions hold since $I\subseteq I_a^+$ and $J\subseteq I_a^-$. For the third condition, since $J \in I_{a, <}^{-}$, we have the following inequality for any $K \subseteq I^{c} \cap J^{c}$: 
\[
a(J) \leq a(I_{a}^{-} \setminus J) \leq a(K \cap (I_{a}^{-} \setminus J)) \leq a(K \cap (I_{a}^{-} \setminus J)) + a(K \cap (I_{a}^{+} \setminus J)) = a(K)  
\]
Condition (iv) holds by a similar chain of inequalities. Hence the number of pairs of subsets satisfying condition II is at least $|I_{a, >}^{+}|\cdot |I_{a, <}^{-}| - 1 =  2^{d-2}-1$ by the same arguments as for cross polytopes. For type III, we consider pairs $(I, J)$ such that $(I^{c}, J^{c})\in I_{a, <}^{-} \times I_{a, >}^{+}.$ This yields the same contribution as from type II, namely $2^{d-2}-1$. Hence, $N_{Q_d}\geq 2(2^{d-2} -1)$ and thus $N_{P}(a) = \Omega(2^{d})$. This completes the proof.
\end{proof}

\section{Outlook}\label{sec:outlook}
    We determined the bisection fans corresponding to the following polytopes: polygons, cubes, cross-polytopes, and the symmetric edge polytope of the complete graph. Taken together with the tropical unit ball, one might note that no single theme connects these five families of polytopes. Observe however, that this list contains two pairs of polar duals: namely cubes and cross-polytopes together with the symmetric edge polytope of the complete graph and the tropical unit ball. This might hint at the idea that if $P$ admits a bisection fan, then so does its polar dual $P^{\small \Delta}.$

    Having proved that the bisection fan exists for all centrally symmetric polytopes of dimension $2$, it is natural to ask the same question in dimension $3$ or higher. 
    \begin{quest}
        Does every centrally symmetric polytope admit a bisection fan?
    \end{quest}
    
    With the following pictures (Figure \ref{fig:panel_4_bisfans_fedorov}) as evidence, we posit that the bisection fan exists for the five Fedorov solids, that is the family of zonotopes that tile $3$-space by translations. This family consists of the cube (Figure \ref{fig:bisfan_cube}), the rhombic dodecahedron, which realises the tropical unit ball in dimension $3$ (Figure \ref{fig:rhombic_dodeca}), the hexagonal prism (Figure \ref{fig:hexa_prism}), the truncated octahedron, which realises the permutahedron of dimension $3$ (Figure \ref{fig:truncated_octa}), and the hexarhombic dodedecahedron (Figure \ref{fig:elongated_dodeca}). Note that in the cases where we have not proved the existence of the bisection fan, these pictures, generated by \texttt{polymake} \cite{gawrilow1997polymake}, represent the intersection of all the bisection cones of $P$ with the polytope $P$. The scripts that produced these pictures along with other supplementary material are freely available at 
    \begin{center}
    \url{https://github.com/AryamanJal/Bisection_fan}. 
    \end{center}
    From these pictures one could ask if the bisection cones - and fan - in these cases have combinatorial interpretations given the interesting subdivisions of the boundary that they give rise to. Observe from Figure~\ref{fig:panel_4_bisfans_fedorov} that even under simple operations, the bisection fan can change drastically. For example, the bisection fan of the hexagonal prism, if it exists, bears no obvious relation to that of the hexagon. This reinforces the leitmotif that simple convex bodies can give rise to bisectors with complex combinatorics.

    \begin{figure}
        \centering
        \begin{subfigure}[b]{0.475\textwidth}
            \centering
            \includegraphics[scale=0.5]{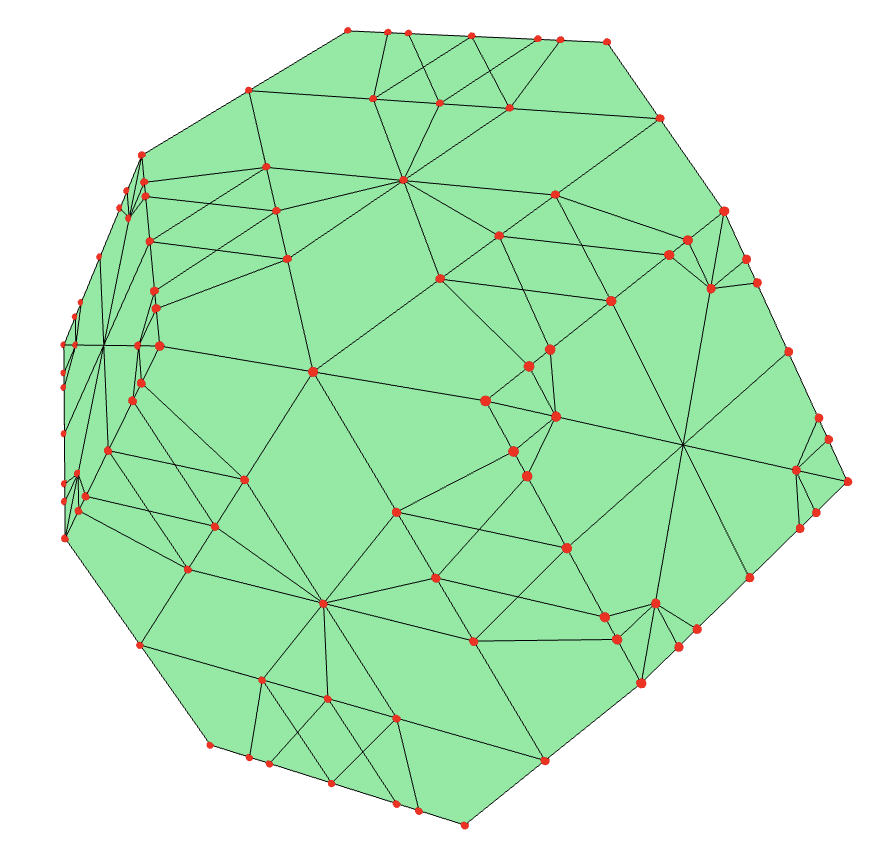}
    \caption{Truncated octahedron}
    \label{fig:truncated_octa}
        \end{subfigure}
        \hfill
        \begin{subfigure}[b]{0.475\textwidth}  
            \centering 
             \includegraphics[scale=0.5]{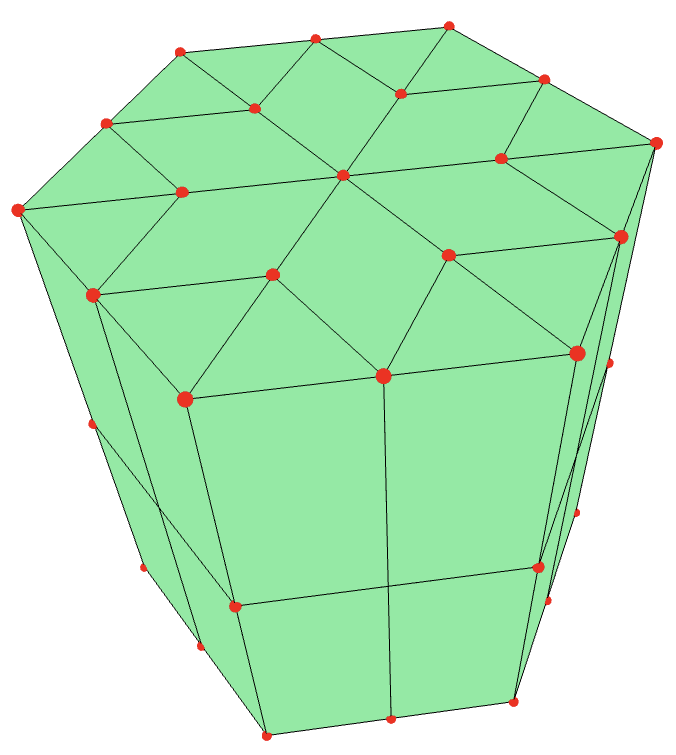}
    \caption{Hexagonal prism}
    \label{fig:hexa_prism}
        \end{subfigure}
        \vskip\baselineskip
        \begin{subfigure}[b]{0.475\textwidth}   
            \centering 
             \includegraphics[scale=0.5]{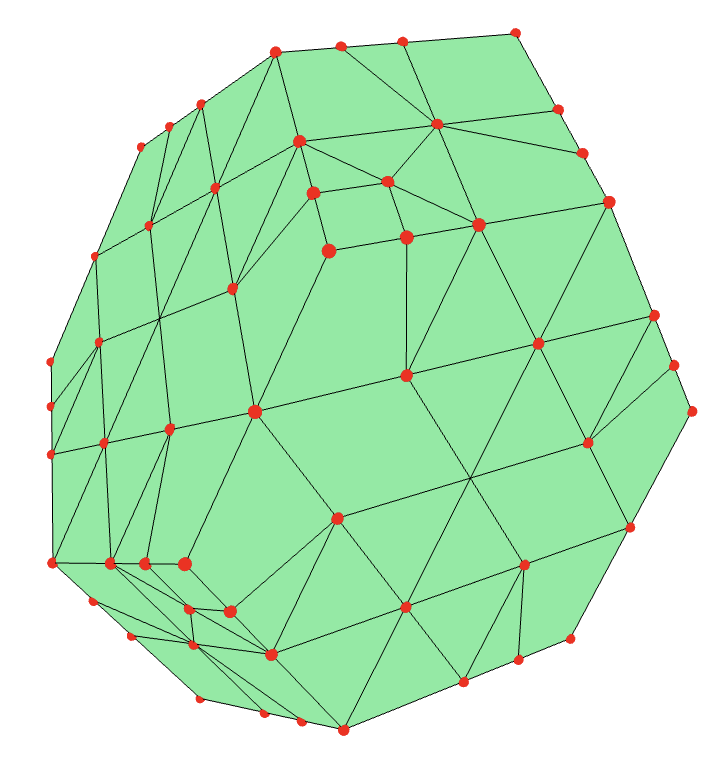}
    \caption{Hexarhombic dodecahedron}
    \label{fig:elongated_dodeca}
        \end{subfigure}
        \hfill
        \begin{subfigure}[b]{0.475\textwidth}   
            \centering 
            \includegraphics[scale=0.5]{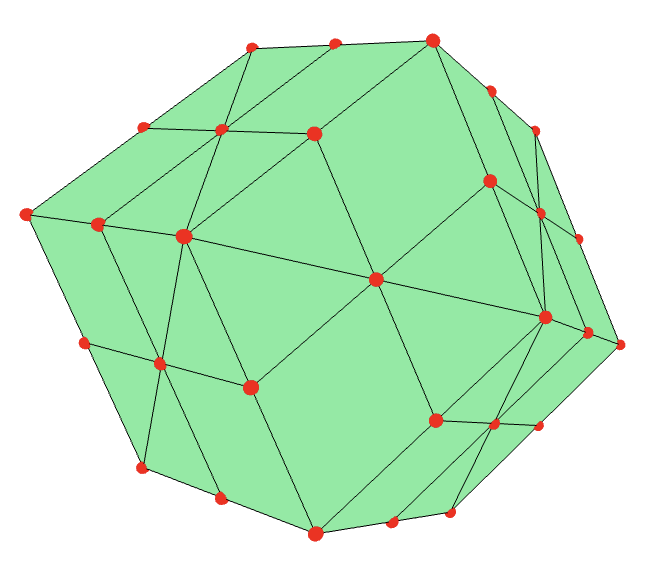}
    \caption{Rhombic dodecahedron}
    \label{fig:rhombic_dodeca}
        \end{subfigure}
        \caption[]
        {\small The intersection of $P$ and the collection of its bisection cones, for four of the five Fedorov solids $P$.} 
        \label{fig:panel_4_bisfans_fedorov}
    \end{figure}

    \textbf{Acknowledgements}: The authors are grateful to the anonymous reviewer for careful reading of the manuscript and helpful comments. The authors would like to thank Michael Joswig, Benjamin Schr\"oter and Francisco Criado for helpful and inspiring discussions. In particular, they are grateful to Michael Joswig for hosting them at TU Berlin during spring 2022 during which parts of this work were undertaken. They are also very thankful to Benjamin Schr\"oter for his invaluable help with $\texttt{polymake}$. 
    
    AJ and KJ were partially supported by the Wallenberg AI, Autonomous Systems and Software Program funded by the Knut and Alice Wallenberg Foundation. KJ was moreover supported by a Hanna Neumann Fellowship of the Berlin Mathematics Research Center Math+, grant 2018-03968 of the Swedish Research Council and the Göran Gustafsson Foundation.

\newpage
\bibliography{bisbib}
\bibliographystyle{plain}
\end{document}